\newcommand*{\myproofname}{Proof}
\def\qed{\hfill\ifhmode\unskip\nobreak\fi\qquad\ifmmode\Box\else\hfill$\Box$\fi}
\title{Monochromatic connected matchings\\ in 2-edge-colored multipartite graphs }
\date{\today}
\author{
J\' ozsef Balogh~\thanks{Department of Mathematics, University of Illinois at Urbana--Champaign, IL, USA and Moscow Institute of Physics and Technology, Russian Federation, jobal@illinois.edu. 
Research of this author is partially supported by  NSF Grants DMS-1500121 and DMS-1764123, Arnold O. Beckman Research Award (UIUC) Campus Research Board 18132 and the Langan Scholar Fund (UIUC).}
\and Alexandr Kostochka \thanks{Department of Mathematics, University of Illinois at Urbana--Champaign, IL, USA and
Sobolev Institute of Mathematics, Novosibirsk 630090, Russia, kostochk@math.uiuc.edu. Research of this author is supported in part by NSF grant
 DMS-1600592, by UIUC Campus Research Board Award RB20003, and by grant  19-01-00682 of the Russian Foundation for Basic Research.}
  \and Mikhail Lavrov\thanks{Department of Mathematics, Kennesaw State University, GA, USA, mlavrov@kennesaw.edu; the work was done while M. Lavrov was a postdoc at Department of Mathematics, University of Illinois at Urbana--Champaign.}
 \and Xujun Liu\thanks{Department of Foundational Mathematics, Xi'an Jiaotong-Liverpool University, Jiangsu Province, China,  	Xujun.Liu@xjtlu.edu.cn. The work was partially done while X. Liu was a PhD student at Department of Mathematics, University of Illinois at Urbana--Champaign. Research of this author was supported in part by Award RB17164 of the UIUC Campus Research Board.}
 }
\newcommand{\neutralize}[1]{\expandafter\let\csname c@#1\endcsname\count@}
\newtheorem{theorem}{Theorem}
\newtheorem{prop}[theorem]{Proposition}
\newtheorem{lemma}[theorem]{Lemma}
\newtheorem{conj}[theorem]{Conjecture}
\newtheorem{claim}[theorem]{Claim}
\newtheorem{defn}[theorem]{Definition}
\newtheorem{remk}[theorem]{Remark}
	\def\quotient#1#2{%
		\raise1ex\hbox{$#1$}\Big/\lower1ex\hbox{$#2$}%
	}
	\renewcommand{\epsilon}{\varepsilon}
\begin{document}
	\maketitle
	
\begin{abstract}
A matching $M$ in a graph $G$ is {\em connected} if all the edges of $M$ are in the same component of $G$. Following \L uczak,
there have been many results using
 the existence of  large connected matchings in cluster graphs  with respect to regular partitions of large graphs to show the existence of long paths and other structures in these  graphs. We prove exact 
 Ramsey-type bounds on the sizes of monochromatic connected matchings in $2$-edge-colored multipartite graphs. In addition, we prove a stability theorem for such matchings.\\
\\
 {\small{\em Mathematics Subject Classification}: 05C35, 05C38, 05C70}\\
 {\small{\em Key words and phrases}:  connected matchings, paths, Ramsey theory.}
\end{abstract}

	\section{Introduction}	
Recall that for graphs $G_0,\ldots,G_k$ we write $G_0 \mapsto (G_1,\ldots,G_k)$ if for every $k$-coloring of the edges of $G_0$, for some $i\in [k]$ there will be a copy of $G_i$ with all edges of color $i$.	The {\em Ramsey number} $R_k(G)$ is the minimum $N$ such that $K_N\mapsto (G_1,\ldots, G_k)$, where $G_1=\ldots=G_k=G$.
Gerencs\' er and Gy\' arf\' as~\cite{GG1} proved in 1967 that the $n$-vertex path $P_n$ satisfies $R_2(P_n)=\left\lfloor \frac{3n-2}{2}\right\rfloor$.
Significant progress in bounding $R_k(P_n)$ for  $k\geq 3$ and $R_k(C_n)$ for even $n$  was achieved after 2007~(see~\cite{BLSSW,BS1, DK1,FL1,GRSS1,
GRSS2,KS1,LSS1,S1} and some references in them). All these proofs used the Szemer\' edi Regularity Lemma~\cite{Sz} and the idea of connected matchings in regular partitions due to \L uczak~\cite{L1}. 
	
Recall that a matching $M$ in a graph $G$ is {\em connected} if all the edges of $M$ are in the same component of $G$. 
We will denote a connected matching with $k$ edges by $M_k$.
The use of connected matchings is illustrated for example by the following version of a lemma by  Figaj and  \L uczak~\cite{FL1}.

\begin{lemma}[Lemma 8 in~\cite{LSS1} and Lemma~1 in~\cite{KS1}]\label{L1}
Let a real number
$c > 0$ and a positive integer $k$
be given. If for every $\epsilon>0$ there exists a
$\delta>0$ and an $n_0$ such that for every even
$n > n_0$ and each graph $G$ with $v(G)>(1 +\epsilon) cn$ and
$e(G)\geq (1-\delta){v(G)\choose 2}$ each $k$-edge-coloring of $G$
has a monochromatic connected matching $M_{n/2}$, then for sufficiently large $n$, $R_k(C_n)\leq(c+o(1))n$ (and hence
$R_k(P_n)\leq(c+o(1))n$).
\end{lemma}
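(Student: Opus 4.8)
The plan is to run \L uczak's regularity method ``in reverse'': apply the multicolor Szemer\'edi Regularity Lemma to a $k$-edge-colored complete graph, pass to the bounded-size cluster graph, use the hypothesis to find a monochromatic connected matching there, and lift that matching back to a monochromatic $C_n$. Fix $\gamma>0$; the target is $K_N\mapsto(C_n,\dots,C_n)$ for $N=\lceil(c+\gamma)n\rceil$ and all large even $n$. First I would choose $\epsilon>0$ small enough that $(1+\epsilon)c<c+\gamma$ with a little room to spare, take the $\delta>0$ and $n_0$ the hypothesis returns for this $\epsilon$, fix a regularity parameter $\epsilon'\le\min\{\delta,\tfrac1{100k}\}$ and a lower bound $m_0$ on the number of clusters with $m_0>(1+\epsilon)c(n_0+2)$, let $M_1=M_1(\epsilon',m_0)$ be the resulting upper bound from the Regularity Lemma, and finally take $n$ even and large in terms of all of these.

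Given a $k$-edge-coloring of $K_N$, regularize it into clusters $V_0,V_1,\dots,V_m$ with $m_0\le m\le M_1$, $|V_0|\le\epsilon' N$, common cluster size $L=\lfloor(N-|V_0|)/m\rfloor$, and all but at most $\epsilon'\binom m2$ pairs $\epsilon'$-regular in every color. Build the cluster graph $R$ on $[m]$ by joining $i\sim j$ whenever $(V_i,V_j)$ is $\epsilon'$-regular in all $k$ colors, and color $ij$ by a color in which $d(V_i,V_j)\ge 1/k$ (one exists since the densities sum to $1$). Then $v(R)=m$ and $e(R)\ge(1-\epsilon')\binom m2\ge(1-\delta)\binom{v(R)}{2}$. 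Let $n'$ be the largest even integer with $(1+\epsilon)c\,n'<m$; by the choice of $m_0$ we have $n'>n_0$, so the hypothesis applies to the $k$-edge-colored graph $R$ and produces a monochromatic connected matching $M_{n'/2}$ in $R$ --- say red --- with all edges in one component $H$ of the red subgraph of $R$.

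It remains to lift this to a red $C_n$ in $K_N$, which is the technically heaviest step. I would clean up the clusters, deleting from each the few vertices of small red degree into the clusters it is matched or linked through, so that every relevant $\epsilon'$-regular pair has red minimum degree $\gtrsim L/k$; by a standard regular-pair lemma such a pair then contains a red path covering all but $\le 2\epsilon' L$ of its vertices between essentially freely prescribed endpoints on the two sides, while each red edge of $R$ yields a red edge between almost any two of the corresponding clusters' vertices. Using a spanning tree of $H$ and a depth-first traversal of it meeting all $n'/2$ matching edges, I would concatenate the near-spanning red paths of the matched pairs, bridging consecutive pairs along the tree using a bounded number of vertices per tree-step. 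Since $m\le M_1$ is an absolute constant, the total cost of all bridges is $O(M_1)=O(1)$, negligible beside the $\approx n'L$ vertices contributed by the matched pairs; and as $n'L\approx N/((1+\epsilon)c)=(c+\gamma)n/((1+\epsilon)c)$ exceeds $n$ by a fixed factor, this produces a red connected subgraph on more than $n$ vertices whose regular-pair slack lets me extract a red cycle of any prescribed even length up to $(1-o(1))n'L$, in particular a red $C_n$. Unwinding the quantifiers gives $R_k(C_n)\le(c+\gamma)n$ for all large even $n$, hence $R_k(C_n)\le(c+o(1))n$; and $R_k(P_n)\le(c+o(1))n$ follows since $P_n\subseteq C_n$.

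The main obstacle is this lifting step --- in particular organizing the bridging paths so as not to lose a positive fraction of the vertices, and then tuning lengths to land on exactly $n$ --- but it is precisely the machinery behind \L uczak's method and its later refinements, and it uses only regularity, not the hypothesis. One should also verify along the way that $\epsilon,\epsilon'$ have been chosen so that the cumulative $o(1)$ losses (the set $V_0$, the $2\epsilon' L$ per matched pair, the $O(M_1)$ bridge vertices) stay comfortably below the fixed multiplicative gap between $n'L$ and $n$; this is routine.
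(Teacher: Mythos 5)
The paper does not prove Lemma~\ref{L1}; it is quoted from~\cite{LSS1} and~\cite{KS1}, so there is no in-paper proof to compare against. Your sketch is the standard \L uczak regularity argument used in those sources --- regularize, pass to the majority-color reduced graph, apply the hypothesis there to obtain a monochromatic connected matching, and lift it to a monochromatic $C_n$ by stitching near-spanning paths through the regular pairs of the matching --- and the quantifier choices and the length/parity tuning you outline are exactly those of the cited proofs, the one step you defer (paths of prescribed length and parity between prescribed endpoints in a dense regular pair) being the same auxiliary lemma they also invoke.
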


Moreover,  Figaj and  \L uczak~\cite{FL1} showed that for any real positive numbers $\alpha_1, \alpha_2, \alpha_3$ the Ramsey number for a triple of even cycles of lengths $2 \lfloor \alpha_1 n \rfloor, 2 \lfloor \alpha_2 n \rfloor, 2 \lfloor \alpha_3 n \rfloor,$ respectively, is $(\alpha_1+ \alpha_2+\alpha_3 +\max\{\alpha_1, \alpha_2, \alpha_3\} + o(1))n.$

Similar problems with complete $3$-partite host graphs $K_{N,N,N}$ and complete bipartite host graphs $K_{N,N}$ instead of $K_N$ were considered by 
Gy\' arf\' as,  Ruszink\' o,   S\'ark\"ozy and  Szemer\' edi~\cite{GRSS0}, DeBiasio
and  Krueger~\cite{DK1} and Bucic,  Letzter and  Sudakov~\cite{BLS1,BLS2}. All of these papers also exploited connected matchings in cluster graphs.
The main result of Gy\' arf\' as,  Ruszink\' o,   S\' ark\" ozy and  Szemer\' edi~\cite{GRSS0} was

\begin{theorem}[\cite{GRSS0}]\label{tt1} For positive integers $n$,  $K_{n,n,n}\mapsto (P_{2n-o(n)},P_{2n-o(n)})$.
\end{theorem}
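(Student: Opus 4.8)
The plan is to follow \L uczak's connected‑matching method, carried out directly for the tripartite host graph rather than quoting Lemma~\ref{L1} verbatim. Fix $\epsilon>0$; the goal is to show that for $n$ large, every $2$‑edge‑coloring of $K_{n,n,n}$ contains a monochromatic path on at least $(2-\epsilon)n$ vertices. First I would regularize. Apply the $2$‑color Szemer\'edi Regularity Lemma to the coloring, taking the regular partition to refine the tripartition $A\cup B\cup C$. After deleting undersized clusters, irregular pairs, pairs of low density in both colors, and pairs lying inside one of $A,B,C$, one is left with a cluster graph $\mathcal G$ that is $3$‑partite with $m$ clusters in each super‑part and with all but $o(m^2)$ of the $3m^2$ possible edges present; color each surviving edge of $\mathcal G$ by a color in which the corresponding pair is $\epsilon'$‑regular of density at least $d$. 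This is the analogue, for $K_{n,n,n}$, of the reduction to cluster graphs used for $K_N$ in Lemma~\ref{L1}.

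The heart of the argument is the purely combinatorial statement that every $2$‑edge‑coloring of a $3$‑partite graph on parts of size $m$ missing only $o(m^2)$ edges contains a monochromatic \emph{connected} matching with $(1-o(1))m$ edges. I would prove this by analysing the monochromatic component structure of the nearly complete $3$‑partite host. The key observation is that if the largest red component meets some part, say $A$, in $\approx m$ vertices but meets $B$ only in $o(m)$ vertices, then every edge between that part of $A$ and the rest of $B$ must be blue, so blue already contains a nearly complete bipartite graph between two sets of size $\approx m$; iterating this kind of dichotomy one shows that some color, say red, has a single component $\mathcal C$ meeting at least two of the three parts in all but $o(m)$ vertices. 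A Tutte--Berge (or, in the bipartite case, K\H{o}nig) matching argument inside $\mathcal C$ then yields a connected matching with $(1-o(1))m$ edges, because the near‑completeness of the host forbids a small vertex barrier, which is the only thing that could obstruct a near‑optimal matching. The extremal coloring (all $A$--$B$ and $A$--$C$ edges red, all $B$--$C$ edges blue) shows $(1-o(1))m$ cannot be improved, which is precisely why the theorem produces $P_{2n-o(n)}$ and not a longer path.

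Finally I would lift the connected matching back to $K_{n,n,n}$. Suppose $\mathcal G$ contains a red connected matching $\{X_1Y_1,\dots,X_tY_t\}$ with $t=(1-o(1))m$, all pairs $\epsilon'$‑regular of red‑density $\ge d$, and all of $X_1Y_1,\dots,X_tY_t$ inside one red component $\mathcal C$ of $\mathcal G$. The standard embedding lemma then produces a long red path: inside each pair $X_iY_i$ regularity gives a red path through all but $o(n/m)$ of the roughly $2n/m$ vertices of $X_i\cup Y_i$, and consecutive such pieces are joined by short red paths routed through $\mathcal C$ (again using regularity and the fact that every cluster has $\Theta(n/m)$ vertices). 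Concatenating gives a red path on $(1-o(1))\cdot 2t\cdot(n/m)=(2-o(1))n$ vertices. Choosing $d,\epsilon'$ and the cluster‑number bounds so that all $o(1)$ terms stay below $\epsilon$ completes the proof.

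The main obstacle is the combinatorial lemma of the second paragraph: one must guarantee \emph{simultaneously} that the large monochromatic matching is connected and of size $(1-o(1))m$, so both the component‑structure dichotomy and the ``no small barrier'' matching argument have to be pushed to near‑optimality and be robust to the $o(m^2)$ missing edges left by regularization. The regularization and lifting steps are routine once that lemma is in hand.
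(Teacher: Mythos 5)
This statement is quoted from~\cite{GRSS0}; the present paper does not reprove it, but its Theorem~\ref{2conn} is precisely the exact form of the combinatorial lemma your argument hinges on, so the right comparison is with the proof of that theorem. Your overall architecture (regularize, find a monochromatic connected matching of size $(1-o(1))m$ in the nearly complete $3$-partite cluster graph, lift via the standard \L uczak embedding) is the same as the one used in~\cite{GRSS0} and throughout this literature, and the regularization and lifting steps are indeed routine.

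The genuine gap is in your proof of the central lemma. You argue that once a single red component $\mathcal C$ meets two parts in all but $o(m)$ vertices, a Tutte--Berge/K\H{o}nig argument gives a red matching of size $(1-o(1))m$ ``because the near-completeness of the host forbids a small vertex barrier.'' That justification is false: the barrier relevant to Tutte--Berge lives in the \emph{red} subgraph, not in the host, and the red graph inside a spanning component can be arbitrarily sparse (e.g.\ close to a star, which has a barrier of size one). Near-completeness of the host tells you nothing about red barriers. The correct argument is a second dichotomy at the level of the matching structure: if the Gallai--Edmonds decomposition of the red graph has a small set $A$ whose deletion leaves many components $D_1,\dots,D_k$, then all host edges running \emph{between} distinct $D_i$'s are blue, and one must assemble from them a large \emph{connected} blue matching --- which in the multipartite setting requires care, since two such blue edges need not lie in one blue component when the relevant vertices share a part. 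This is exactly what Section~4 of the paper does (Claims~\ref{compl}--\ref{r-matching} and the two concluding cases), and it is the step your sketch replaces with an incorrect appeal to the host's density. Your component-structure dichotomy (which part each large monochromatic component meets) is a necessary first step but does not substitute for this barrier-to-blue-matching conversion.
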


They also conjectured the exact bound:

\begin{conj}[\cite{GRSS0}]\label{cc1} For positive integers $n,$  $K_{n,n,n}\mapsto (P_{2n+1},P_{2n+1})$.
\end{conj}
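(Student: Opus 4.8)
The plan is to run \L uczak's connected-matching strategy, with one important caveat: since the target length $2n+1$ is \emph{exact}, we cannot afford the $o(n)$ error introduced by the Regularity Lemma, so the argument must be carried out directly inside $K_{n,n,n}$ rather than inside a reduced cluster graph. The two ingredients I would establish first are (a) an exact bound on monochromatic connected matchings --- that every $2$-edge-coloring of $K_{n,n,n}$ contains a monochromatic connected matching with at least $n$ edges --- and (b) a stability statement: a coloring admitting no monochromatic connected matching with $n+1$ edges inside a component rich enough in edges to be ``usable'' must be within $O(1)$ edges of one of a short list of extremal colorings. With these in hand, assume for contradiction that some $2$-edge-coloring of $K_{n,n,n}$, with parts $A,B,C$ of size $n$, has no monochromatic $P_{2n+1}$.

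The first reduction is to show that it suffices to find a monochromatic, say red, connected matching $M$ with $|M|\ge n+1$ lying in a red component $H$ that is not star-like. Such an $M$ covers only $2n+2$ of the $3n$ vertices, leaving $n-2$ spare vertices to serve as connectors; ordering the edges of $M$ along a spanning tree of $H$ and greedily linking consecutive matching edges --- by a red edge between their endpoints when one exists, and otherwise by a length-two red path through a spare vertex --- yields a red path on at least $2n+1$ vertices, provided $H$ carries enough red edges that the greedy linking never runs out of connectors. By ingredient (a) a monochromatic connected matching with $n$ edges always exists; if one with $n+1$ edges in a usable component exists as well, this step finishes the proof, so it remains to handle the colorings to which ingredient (b) applies.

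In the extremal (and near-extremal) regime I would go through the extremal colorings one at a time. The prototypes are: one color class equal to the complete bipartite graph between two parts --- say all edges between $B$ and $C$ red --- which forces the blue class to contain the complete bipartite graph $K_{n,2n}$ between $A$ and $B\cup C$ and hence a zigzag $P_{2n+1}$ using all of $A$ and $n+1$ vertices of $B\cup C$; and ``split'' colorings in which $V(K_{n,n,n})$ is partitioned into $X\cup Y$ with one color essentially inside $X$ and inside $Y$ and the other essentially between them, where in every range of $|X|$ not already excluded either the between-class or the larger of the two within-classes contains $P_{2n+1}$ (a short computation on path lengths in complete bipartite and complete tripartite graphs shows this). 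For each prototype one writes down the explicit long monochromatic path in the idealized coloring and then shows that the $O(1)$ edges in which the actual coloring deviates from the prototype can be absorbed by a local reroute through spare vertices, without shortening the path below $2n+1$.

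The main obstacles I anticipate are, first, proving ingredients (a) and (b) --- the exact connected-matching bound for $K_{n,n,n}$, and especially a stability statement sharp enough to reduce the extremal family to a manageable list --- and second, the exact endgame: because there is no asymptotic slack, every vertex of the path must be accounted for, so both the greedy linking in the main case and the rerouting around the $O(1)$ deviating edges in the extremal cases must be executed within tight vertex budgets. The rerouting in the near-extremal cases is where I expect the bulk of the technical work to lie.
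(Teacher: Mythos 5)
The statement you are proving is stated in this paper only as a conjecture (due to Gy\'arf\'as, Ruszink\'o, S\'ark\"ozy and Szemer\'edi); the paper itself does not prove it. What the paper supplies are exactly your ingredients (a) and (b): Theorem~\ref{2conn} with $x_1=x_2=n$ gives the monochromatic connected matching $M_n$ in every $2$-colored $K_{n,n,n}$, and Theorem~\ref{stability} is the stability statement. The deduction of the conjecture (and only for large $n$) is deferred to the follow-up paper~\cite{BKLL1}. So your high-level architecture is the intended one, but your outline is not a proof, and its central step is where the real difficulty lies.

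The concrete gap is the conversion of a connected matching into a path. First, a monochromatic connected matching of size $n+1$ need not exist at all: Theorem~\ref{2conn} would require $N\ge 2(n+1)+(n+1)-1=3n+2$ vertices, and $K_{n,n,n}$ has only $3n$; what the non-extremal branch of Theorem~\ref{stability} gives is a connected matching of size $(1+\gamma)n$ in a \emph{cluster} graph, not in $K_{n,n,n}$ itself. Second, your greedy linking fails on a vertex budget even if such a matching existed: $n+1$ matching edges require $n$ links, each consuming a spare vertex when no direct edge is available, but only $3n-(2n+2)=n-2$ spare vertices remain; and nothing guarantees that consecutive matching edges in the red component can be joined by red paths of length at most two --- the component may have large diameter in the red graph. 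This is precisely why the actual argument in~\cite{BKLL1} \emph{does} use the Regularity Lemma: your premise that one ``cannot afford'' the $o(n)$ loss is backwards. The point of the stability theorem is that either the reduced graph has a connected matching with a multiplicative slack $(1+\gamma)$, in which case the $o(n)$ regularity losses are absorbed and each regular pair supplies a near-spanning subpath (so no ad hoc linking is needed), or else the coloring is near-extremal and is analyzed directly, as in your final paragraph. Finally, note that this route only yields the conjecture for large $n$, whereas the statement asserts it for all positive integers $n$; small $n$ would need a separate argument.
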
	

Since the papers~\cite{GRSS0,BLS1,BLS2} proved asymptotic bounds, they  used approximate bounds
on maximum sizes of monochromatic connected matchings in edge-colored dense
 multipartite graphs. But for the exact bound~\cite{GRSS1,GRSS2} (for large $N$) on long paths in $3$-edge-colored $K_N$ and for the exact bound by
DeBiasio
and  Krueger~\cite{DK1} on long paths and cycles in $2$-edge-colored bipartite graphs, one needs a stability theorem: {\em either the edge-colored graph has a  large monochromatic connected matching, or the edge-coloring is very special}.

In this paper, we find exact bounds on the size of a maximum monochromatic connected matching in each $2$-edge-colored complete multipartite graph $K_{n_1,\ldots,n_k}$. This generalizes, sharpens and extends the corresponding results in~\cite{GRSS0} and can be considered as an extension of one of the  results in~\cite{DK1}. We also prove a corresponding stability theorem in the spirit of~\cite{GRSS1} and~\cite{DK1}.
In our follow-up paper~\cite{BKLL1} we use this stability theorem to prove among other results that for large $n$,  Conjecture~\ref{cc1} and the relation $K_{n,n,n}\mapsto (C_{2n},C_{2n})$ hold.

\section{Notation and results}	

Let $\alpha'(G)$ denote the size of a largest matching in $G$ and $\alpha'_*(G)$ denote the size of a largest connected matching in $G$. Let $\alpha(G)$ denote the independence number and $\beta(G)$ denote the size of a smallest vertex cover in $G$.

For a graph $G$ and $W_1, W_2 \subseteq V(G)$, let $G[W_1, W_2]$ denote the subgraph of $G$ consisting of edges with one endpoint in $W_1$ and 
the other endpoint in $W_2$.


 We seek  minimal restrictions on $n_1\geq n_2\geq \ldots\geq n_s$ guaranteeing that every $2$-edge-coloring of
$K_{n_1, n_2, \ldots , n_s}$ contains a monochromatic  $M_n$. An obvious necessary condition is that 
\begin{equation}\label{jj1}
 N:=n_1+\ldots+n_s\geq 3n-1.
 \end{equation}
  Indeed, even $K_{3n-2}\not\mapsto (M_{n},M_{n})$: for $G=K_{3n-2}$, partition $V(G)$
into sets $U_1$ and $U_2$ with $|U_1|=2n-1$, $|U_2|=n-1$, and color the edges of $G[U_1,U_2]$ with red and the rest of the edges with blue. Then there is no monochromatic  $M_n$; see Figure~\ref{ex1}. The other natural requirement is that 
\begin{equation}\label{jj2}
 N-n_1=n_2+\ldots+n_s\geq 2n-1.
 \end{equation}
 Indeed, for $N=n_1+2n-2$, consider the graph $H$ obtained from $K_N$ by deleting the edges inside a vertex subset $U_1$ with $|U_1|=n_1$. Graph $H$ contains every $K_{n_1, n_2, \ldots , n_s}$ with $n_2+\ldots+n_s=2n-2$.
 Partition $V(H)-U_1$ into sets $U_2$ and $U_3$ with $|U_2|=|U_3|=n-1$. Color all edges incident with $U_2$ red, and the remaining edges of $H$ blue. Again,  there is no monochromatic  $M_n$; see Figure~\ref{ex2}.

\begin{figure}[ht]\label{f1}
\hspace{5mm}
\begin{minipage}[b]{0.4\textwidth}
\begin{tikzpicture}[scale=0.5, transform shape]

\draw  (-2.5,3.5) [black, fill=blue!10] ellipse (4 and 2);
\draw  (-2.5,-1) [black, fill=blue!10] ellipse (2 and 1);
\node [ shape=circle, minimum size=0.1cm,  fill = black!1000, align=center] (v1) at (-1.95,3.5) {};
\node [ shape=circle, minimum size=0.1cm,  fill = black!1000, align=center] (v2) at (-0.8,3.5) {};
\node [ shape=circle, minimum size=0.1cm,  fill = black!1000, align=center] (v3) at (0.35,3.5) {};
\node [ shape=circle, minimum size=0.1cm,  fill = black!1000, align=center] (v4) at (-3.2,3.5) {};
\node [ shape=circle, minimum size=0.1cm,  fill = black!1000, align=center] (v5) at (-4.35,3.5) {};
\node [ shape=circle, minimum size=0.1cm,  fill = black!1000, align=center] (v6) at (-5.5,3.5) {};
\node [ shape=circle, minimum size=0.1cm,  fill = black!1000, align=center] (v7)  at (-3.5,-1) {};
\node [ shape=circle, minimum size=0.1cm,  fill = black!1000, align=center] (v8)  at (-1.5,-1) {};
\node[ shape=circle, minimum size=0.1cm,  fill = black!1000, align=center] (v10)  at (-2.5,-1) {};
\draw [red, line width = 2]  (v6) edge (v7);
\draw [red, line width = 2] (v6) edge (v10);
\draw [red, line width = 2] (v6) edge (v8);
\draw[red, line width = 2]  (v5) edge (v7);
\draw [red, line width = 2] (v5) edge (v10);
\draw  [red, line width = 2](v5) edge (v8);
\draw [red, line width = 2] (v4) edge (v7);
\draw [red, line width = 2] (v4) edge (v10);
\draw [red, line width = 2] (v4) edge (v8);

\draw  [red, line width = 2](v1) edge (v7);
\draw  [red, line width = 2](v1) edge (v10);
\draw [red, line width = 2] (v1) edge (v8);
\draw [red, line width = 2] (v2) edge (v7);
\draw [red, line width = 2] (v2) edge (v10);
\draw [red, line width = 2] (v2) edge (v8);
\draw [red, line width = 2] (v3) edge (v7);
\draw [red, line width = 2] (v3) edge (v10);
\draw [red, line width = 2] (v3) edge (v8);

\node at (-9,3.5) {\LARGE{$|U_1|=2n-1$}};
\node at (-8.5,-1) {\LARGE{$|U_2|=n-1$}};

\end{tikzpicture}

\caption{Example for condition~\eqref{jj1}.}\label{ex1}
\end{minipage}
\begin{minipage}[b]{0.5\textwidth}
\begin{tikzpicture}[scale=0.5, transform shape]

\draw  (-2.5,3.5) ellipse (4 and 2);
\draw [black, fill=red!80, rotate = 45] (-6,2.5) ellipse (1 and 2);
\draw [black, fill=blue!10, rotate = -45] (2.5,-1) ellipse (1 and 2);

\node [ shape=circle, minimum size=0.1cm,  fill = black!1000, align=center] (v1) at (-1.9,3.5) {};
\node [ shape=circle, minimum size=0.1cm,  fill = black!1000, align=center] (v2) at (-0.7,3.5) {};
\node [ shape=circle, minimum size=0.1cm,  fill = black!1000, align=center] (v3) at (0.5,3.5) {};
\node [ shape=circle, minimum size=0.1cm,  fill = black!1000, align=center] (v4) at (-3.25,3.5) {};
\node [ shape=circle, minimum size=0.1cm,  fill = black!1000, align=center] (v5) at (-4.35,3.5) {};
\node [ shape=circle, minimum size=0.1cm,  fill = black!1000, align=center] (v6) at (-5.5,3.5) {};
\node [ shape=circle, minimum size=0.1cm,  fill = black!1000, align=center] (v7) at (-5,-3.5) {};
\node [ shape=circle, minimum size=0.1cm,  fill = black!1000, align=center] (v8) at (-7,-1.5) {};
\node[ shape=circle, minimum size=0.1cm,  fill = black!1000, align=center] (v10) at (-6,-2.5) {};

\node [ shape=circle, minimum size=0.1cm,  fill = black!1000, align=center] (v11) at (0,-3.5) {};
\node [ shape=circle, minimum size=0.1cm,  fill = black!1000, align=center] (v12) at (2,-1.5) {};
\node[ shape=circle, minimum size=0.1cm,  fill = black!1000, align=center] (v13) at (1,-2.5) {};

\draw [blue!30, line width = 2]  (v6) edge (v11);
\draw [blue!30, line width = 2] (v6) edge (v13);
\draw [blue!30, line width = 2] (v6) edge (v12);
\draw[blue!30, line width = 2]  (v5) edge (v11);
\draw [blue!30, line width = 2] (v5) edge (v13);
\draw  [blue!30, line width = 2](v5) edge (v12);
\draw [blue!30, line width = 2] (v4) edge (v11);
\draw [blue!30, line width = 2] (v4) edge (v13);
\draw [blue!30, line width = 2] (v4) edge (v12);

\draw  [blue!30, line width = 2](v1) edge (v11);
\draw  [blue!30, line width = 2](v1) edge (v13);
\draw [blue!30, line width = 2] (v1) edge (v12);
\draw [blue!30, line width = 2] (v2) edge (v11);
\draw [blue!30, line width = 2] (v2) edge (v13);
\draw [blue!30, line width = 2] (v2) edge (v12);
\draw [blue!30, line width = 2] (v3) edge (v11);
\draw [blue!30, line width = 2] (v3) edge (v13);
\draw [blue!30, line width = 2] (v3) edge (v12);

\draw [red!100, line width = 2]  (v6) edge (v7);
\draw [red!100, line width = 2] (v6) edge (v10);
\draw [red!100, line width = 2] (v6) edge (v8);
\draw[red!100, line width = 2]  (v5) edge (v7);
\draw [red!100, line width = 2] (v5) edge (v10);
\draw  [red!100, line width = 2](v5) edge (v8);
\draw [red!100, line width = 2] (v4) edge (v7);
\draw [red!100, line width = 2] (v4) edge (v10);
\draw [red!100, line width = 2] (v4) edge (v8);

\draw  [red!100, line width = 2](v1) edge (v7);
\draw  [red!100, line width = 2](v1) edge (v10);
\draw [red!100, line width = 2] (v1) edge (v8);
\draw [red!100, line width = 2] (v2) edge (v7);
\draw [red!100, line width = 2] (v2) edge (v10);
\draw [red!100, line width = 2] (v2) edge (v8);
\draw [red!100, line width = 2] (v3) edge (v7);
\draw [red!100, line width = 2] (v3) edge (v10);
\draw [red!100, line width = 2] (v3) edge (v8);

\node at (4,3.5) {\LARGE{$|U_1|=n_1$}};
\node at (-9.5,-3) {\LARGE{$|U_2|=n-1$}};
\node at (4.5,-3) {\LARGE{$|U_3|=n-1$}};

\draw [red!100, line width = 2]  (v8) edge (v12);
\draw   [red!100, line width = 2](v8) edge (v13);
\draw [red!100, line width = 2]  (v8) edge (v11);
\draw  [red!100, line width = 2] (v10) edge (v12);
\draw  [red!100, line width = 2] (v10) edge (v13);
\draw  [red!100, line width = 2] (v10) edge (v11);
\draw  [red!100, line width = 2] (v7) edge (v12);
\draw  [red!100, line width = 2] (v7) edge (v13);
\draw  [red!100, line width = 2] (v7) edge (v11);

\end{tikzpicture}

\caption{Example for condition~\eqref{jj2}.}\label{ex2}

\end{minipage}
\end{figure}

Our first main result is that the necessary conditions~(\ref{jj1}) and~(\ref{jj2}) together are sufficient for
 $K_{n_1, n_2, \ldots , n_s}\mapsto (M_{n},M_{n})$. We  prove it in the following more general form.

\begin{theorem}\label{2conn}
Let $x_1 \ge x_2 \ge 1, s\geq 2$, and let $G$ be a complete $s$-partite graph $K_{n_1,\ldots,n_s}$ such that
\begin{equation}\label{2conn-1}
N:=n_1+\ldots+n_s\geq 2x_1 + x_2 - 1,
\end{equation}
and
\begin{equation}\label{2conn-2}
N-n_i\geq x_1 + x_2 -1\quad  \mbox{for every $1\leq i\leq s$}.
\end{equation}
Let $E(G)=E_1\cup E_2$ be a partition of the edges of $G$, and let $G_i=G[E_i]$ for $i=1,2$. Then for some $i$, $\alpha'_*(G_i) \ge x_i$.
\end{theorem}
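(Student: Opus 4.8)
The plan is to argue by contradiction: assume $\alpha'_*(G_1)\le x_1-1$ and $\alpha'_*(G_2)\le x_2-1$, so every component of the red graph $G_1$ has matching number at most $x_1-1$ and every component of the blue graph $G_2$ has matching number at most $x_2-1$. Fix a maximum red connected matching $M$, of size $a:=\alpha'_*(G_1)\le x_1-1$, lying in a red component $C$; the target is to produce a \emph{blue} connected matching of size $x_2$, contradicting $\alpha'_*(G_2)\le x_2-1$. Write $U=V(C)$, $U_0=U\setminus V(M)$, and $W=V(G)\setminus U$. Two structural facts follow at once: every edge of $G$ between $U$ and $W$ is blue (the edge-boundary of a monochromatic component is monochromatic in the other colour), and $U_0$ spans no red edge, so $G[U_0]$ is entirely blue and every red edge incident to $U_0$ runs to $V(M)$. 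Hence $G_2$ contains the complete multipartite graph $G[U_0\cup W]$ with all edges inside $W$ deleted, and this subgraph is connected as soon as $U_0$ meets at least two parts.

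To exploit this I would first strip away the slack in the hypotheses by two reductions. If every inequality in \eqref{2conn-1}--\eqref{2conn-2} is strict, delete one vertex of an arbitrary part: the smaller complete multipartite graph still satisfies the hypotheses with the same $x_1,x_2$ and inherits the colouring, so induction on $N$ already yields a red $M_{x_1}$ or a blue $M_{x_2}$; the same deletion, applied to a vertex of the largest part, works whenever \eqref{2conn-1} is strict and $n_1>n_2$. Independently, if $n_{s-1}+n_s\le n_1$ then merging the two smallest parts gives $K_{n_1,\dots,n_{s-2},n_{s-1}+n_s}$, a spanning subgraph of $G$ which still satisfies \eqref{2conn-1}--\eqref{2conn-2} (the only new instance of \eqref{2conn-2} follows from $N-n_1\ge x_1+x_2-1$), so it suffices to treat it. After these reductions one is left with a tight, balanced instance — essentially $N=2x_1+x_2-1$ (or some $N-n_i=x_1+x_2-1$ forced by tied largest parts) together with no part exceeding the sum of the others by much — which is the regime in which $K_{n_1,\dots,n_s}$ behaves, for matching purposes, like $K_N$.

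In that regime I would finish with a matching count: using $|U_0\cup W|=N-2a$ and the defect form of König's theorem for complete multipartite graphs, $\alpha'\bigl(G[U_0\cup W]-E(G[W])\bigr)$ is at least $\min\{\lfloor\tfrac12(N-2a)\rfloor,\ N-2a-p\}$, where $p$ is the size of the largest part met by $U_0\cup W$, and \eqref{2conn-1} together with \eqref{2conn-2} is exactly what is needed to push this quantity up to $x_2$. The main obstacle is that the bound is exact, leaving no room to lose: this count succeeds only when $a$ is appreciably smaller than $x_1-1$, whereas if $a$ is within about $x_2/2$ of $x_1-1$ then $|U_0\cup W|$ is barely $x_2+1$ and a defect estimate is hopeless. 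In that residual case a purely quantitative argument must be replaced by a structural one exploiting the component structure of \emph{both} colours at once: the $2a$ vertices of $V(M)$, and any oversized part, are joined to the rest of $U$ only in blue, and one must show that either the red matching $M$ can be rerouted and enlarged to size $x_1$, or the blue graph has a component covering almost all of $V(G)$ with matching number at least $x_2$. Disposing of the remaining sub-configurations of the red components of $G_1$ that survive this dichotomy is the technical heart of the argument.
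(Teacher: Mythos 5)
Your setup (negate both conclusions, fix a largest red connected matching $M$ of size $a$ in a component $C$, observe that all edges leaving $U=V(C)$ and all edges inside $U_0=U\setminus V(M)$ are blue) is sound, and your two reductions (minimality of $N$, merging small parts) are essentially the paper's Claim~\ref{minN}. But the quantitative step you propose is incorrect as stated: in the graph $G[U_0\cup W]-E(G[W])$ every edge meets $U_0$, so its matching number is at most $|U_0|$, which can be far below both $\lfloor|U_0\cup W|/2\rfloor$ and $|U_0\cup W|-p$ (for instance when $M$ is near-perfect on $C$, so $U_0$ is tiny, while $W$ is large). The defect-K\H{o}nig formula you invoke is for complete multipartite graphs and does not apply to this graph, whose large blue matchings may in fact need to use blue edges inside $W$ or inside $V(M)$ — at which point connectedness of the blue matching becomes the real issue, and your subgraph no longer certifies it.

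More importantly, the part you defer as ``the technical heart'' is essentially the entire proof. The paper resolves exactly these residual configurations with machinery your sketch does not supply: a separate argument for the bipartite case $s=2$ (Claim~\ref{s=2}, which your merging reduction does not eliminate); Claim~\ref{compl}, showing that disconnectedness of one colour forces all matchings of the other colour to be connected (this needs $s\ge 3$ and a case analysis on how a component meets the parts); Claim~\ref{best-coloring}, which recolours edges so that the Gallai--Edmonds decomposition $(A,C,D)$ of $G_1$ has at least three components in $G_1-A$ and no blue edges inside any $D_j$; and Claim~\ref{r-matching} plus the final two cases, which grow a blue matching out of the isolated vertices of $G_1-A$ and use the count $k\ge x_2+1$ of Gallai--Edmonds components, or else produce a red matching saturating $A$ of size $\ge x_1$. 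Your proposal gestures at the right dichotomy (``enlarge the red matching or find a big blue component'') but gives no mechanism for either branch, so as written it does not constitute a proof.
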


There are at least two types of $3$-edge-colorings of $K_{4n-3}$ with no monochromatic $M_n$.
We  use Theorem~\ref{2conn}  to show the following generalization of the   existence of a monochromatic connected matching
$M_n$ in each 3-edge-coloring of $K_{4n-2}$.

\begin{theorem}\label{3conn}
Let $1 \le x_2, x_3 \le x_1$,  $N = 2x_1 + x_2 + x_3 - 2$, and $G = K_N$.
\\
Let $E(G) = E_1 \cup E_2 \cup E_3$ be a partition of the edges of $G$, and let $G_i = G[E_i]$ for $i=1,2,3$. Then for some $i$, $\alpha'_*(G_i) \ge x_i$.
\end{theorem}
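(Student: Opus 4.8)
I would argue by contradiction, assuming $\alpha'_*(G_i)\le x_i-1$ for $i=1,2,3$, and reduce to the two‑color Theorem~\ref{2conn}. Fix a color $c$, put $\{a,b\}=\{1,2,3\}\setminus\{c\}$ with $x_a\ge x_b$, and observe that between any two distinct components of $G_c$ every edge of $K_N$ receives color $a$ or $b$. Hence, if $P_1,\dots,P_q$ are the vertex sets of the components of $G_c$, the complete multipartite graph $K_{|P_1|,\dots,|P_q|}$ is a spanning subgraph of $G_a\cup G_b$, $2$-colored by $a$ and $b$. The first step is the bookkeeping: with this host graph and the budget pair $(x_a,x_b)$, inequality \eqref{2conn-1} reduces to $x_b\le N-2x_a+1$, which always holds because $N=2x_1+x_2+x_3-2$ and $x_1$ is the maximum; the requirement $s\ge 2$ is automatic; and \eqref{2conn-2} holds exactly when every component of $G_c$ has at most $\tau_c:=N-(x_a+x_b-1)$ vertices, where $\tau_1=2x_1-1$, $\tau_2=x_1+x_2-1$, $\tau_3=x_1+x_3-1$. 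Consequently, if for \emph{some} color $c$ every component of $G_c$ has order at most $\tau_c$, Theorem~\ref{2conn} yields a monochromatic connected matching of size $x_a$ or $x_b$, a contradiction. Calling a component of $G_c$ \emph{oversized} if it has more than $\tau_c$ vertices, this reduces the problem to the case that each of the three colors has an oversized component $D_1,D_2,D_3$.

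The second step is to analyze the oversized components. Since $D_c$ is connected with $\alpha'(D_c)\le x_c-1$, the endpoints of a maximum matching of $D_c$ form a vertex cover of size at most $2(x_c-1)$, so $D_c$ has a color-$c$-independent set $I_c=V(D_c)\setminus S_c$ of size more than $\tau_c-2(x_c-1)$ all of whose internal edges use the two colors other than $c$; I would also keep track of the Gallai–Edmonds decomposition of $D_c$, since the vertices missed by a maximum matching are the "inessential'' ones and attaching any vertex to them (in color $c$) inside the same component would already give a color-$c$ connected matching of size $\alpha'(D_c)+1\le x_c$. Here I would first dispose of the sub-case in which the oversized component of some color $c$ has small matching number, by deleting from $G$ a vertex cover $T$ of that component (with $|T|$ within the slack $N-2x_a-x_b+1$ permitted by \eqref{2conn-1}); this breaks the component into singletons without shrinking $\tau_c-|T|$ below the sizes of the other components, so the first paragraph's reduction applies to $G-T$. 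In the remaining sub-case all three oversized components have matching number close to the maximum; then they are so large that $|V(D_i)\cap V(D_j)|\ge |V(D_i)|+|V(D_j)|-N$ is positive (indeed at least $2$) for every pair $i\neq j$, and I would combine the covers/independent sets $S_i,I_i$ over these forced overlaps to locate a monochromatic connected matching of size $x_i$ for some $i$, the final contradiction.

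The step I expect to be the main obstacle is exactly this last sub-case — ruling out "all three colors have an oversized component of near-maximal matching number'' — because the reduction to Theorem~\ref{2conn} is perfectly tight: the genuine extremal colorings of $K_{2x_1+x_2+x_3-3}$ (one color a disjoint union of two cliques $K_{2x_1-1}$; one color a union of stars; the $K_4$-blow-up coloring), after adding a single vertex, sit right at equality in \eqref{2conn-1} or \eqref{2conn-2}, so the structural argument must be carried out with no slack and must be coordinated with the Gallai–Edmonds structure of the $D_i$. A secondary complication is that one color — necessarily one whose parameter is strictly smaller than the other two, and then only by at least $2$ — may carry two oversized components rather than one (color $1$ never can, since two components of order $>2x_1-1$ would need more than $4x_1-2\ge N$ vertices); thus the endgame also splits according to how many oversized components each color has and how these components, together with the cliques or stars forced inside them, overlap one another.
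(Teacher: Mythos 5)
Your first reduction is exactly the paper's opening move (its Claim~\ref{no-large} is your step for $c\in\{2,3\}$; adding $c=1$ is harmless), so you correctly reach the situation where colors $2$ and $3$ each have a component of order greater than $x_1+x_2-1$, resp.\ $x_1+x_3-1$. From there, however, your plan has a genuine gap: the sub-case you yourself flag as the obstacle --- every color has an oversized component with near-maximal matching number --- is the entire content of the theorem, and ``combine the covers/independent sets over the forced overlaps'' is an intention, not an argument. The $K_4$-blow-up coloring shows why no such local combination can finish: there every pairwise overlap you compute is realized, every component is oversized with a near-perfect matching, and the coloring misses the desired matching by a single vertex, so the contradiction cannot come from counting covers and independent sets with no slack. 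Your other sub-case is also shaky as stated: a minimum vertex cover $T$ of $D_c$ can have size up to $2(x_c-1)$, which in general exceeds the slack $x_1+x_c-x_a-1$ in \eqref{2conn-1} (e.g.\ it is $2(x_2-1)$ versus $x_2-1$ for $c=2$), and deleting $T$ also threatens \eqref{2conn-2} for the \emph{other} components of $G_c$. The paper's corresponding step (Claim~\ref{too-large}) is keyed to component \emph{size}, not matching number: when $|S_i|\ge x_1+x_2+x_3-2$ it deletes only the Gallai--Edmonds set $A$ of $G_i[S_i]$, which has size at most $x_i-1$, and uses the bound $|A|+|C_j|\le 2x_i-1$ to verify both hypotheses of Theorem~\ref{2conn}.

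What is missing from your proposal is the structural endgame that the paper runs once $x_1+x_i\le |S_i|\le x_1+x_2+x_3-3$ for $i=2,3$. Setting $Z_0=S_2\cap S_3$, $Z_1=\overline{S_2}\cap\overline{S_3}$, $Z_2=S_2\cap\overline{S_3}$, $Z_3=\overline{S_2}\cap S_3$, all edges of $G[Z_0,Z_1]\cup G[Z_2,Z_3]$ must be color $1$; any vertex cover of this bipartite graph contains one of $S_2,S_3,\overline{S_2},\overline{S_3}$, each of size at least $x_1+1$, so by Theorem~\ref{konig-egervary} it has a matching of size $x_1+1$, which is connected unless the coloring collapses to the exact $K_4$-blow-up pattern of Claim~\ref{z-parts} (each pair $Z_jZ_k$ monochromatic according to a proper $3$-edge-coloring of $K_4$). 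That rigid configuration is then eliminated by induction on $\min\{x_1,x_2,x_3\}$: delete one vertex $v_j$ from each $Z_j$, obtain a connected matching of size $x_i-1$ in some color lying inside two parts, and extend it by the edge between the two corresponding deleted vertices. Neither the four-part partition nor this induction appears in your sketch, and without something playing their role the proof does not close on the extremal colorings you correctly identified.
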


Finally, for the case $x_1 = x_2 = n$ of Theorem~\ref{2conn}, we prove a stability result which will be used in~\cite{BKLL1} to prove Conjecture~\ref{cc1} for large $N$. This will require a few definitions to state.

\begin{defn}\label{suitable}
For $\epsilon>0$ and $s\geq 2$, an $N$-vertex $s$-partite graph $G$ with parts $V_1,\ldots,V_s$ of sizes $n_1\geq n_2\geq\ldots\geq n_s$, and a 2-edge-coloring $E = E_1 \cup E_2$, is $(n,s,\epsilon)$-{\em suitable} if the following conditions hold:
\begin{equation}\label{suit-s1}\tag{S1}
	N=n_1+\ldots +n_s\geq 3n-1,
\end{equation}
\begin{equation}\label{suit-s2}\tag{S2}
	n_2+n_3+\ldots +n_s\geq 2n-1,
\end{equation}
and if $\widetilde{V}_i$ is the set of vertices in $V_i$ of degree at most $N-\epsilon n-n_i$ and $\widetilde{V}=\bigcup_{i=1}^s \widetilde{V}_i$, then 
\begin{equation}\label{suit-s3}\tag{S3}
	|\widetilde{V}|=|\widetilde{V}_1|+\ldots +|\widetilde{V}_s|<\epsilon n.
\end{equation}
We do not require $E_1 \cap E_2 = \emptyset$; an edge can have one or both colors. We write $G_i = G[E_i]$ for $i=1,2$.
\end{defn}

\begin{remk}
Note that a $2$-edge-coloring is actually not needed in Definition~\ref{suitable}. However, since we always talk about $(n, s, \epsilon)$-suitable graphs with a $2$-edge-coloring, we assume by default that an $(n, s, \epsilon)$-suitable graph has a $2$-edge-coloring and thus include it in the definition.
\end{remk}

Our stability result gives a partition of the vertices of near-extremal graphs called a $(\lambda,i,j)$-{\em bad partition}. There are two types of bad partitions for $(n,s,\epsilon)$-suitable graphs.

\begin{defn}
For $i\in \{1,2\}$, $\lambda > 0$, and an $(n,s,\epsilon)$-suitable graph $G$, a partition $V(G)=W_1\cup W_2$ of $V(G)$ is $(\lambda,i,1)$-{\em bad} if the following holds:
\begin{enumerate}
\item[(i)] $(1-\lambda)n \le |W_2| \le (1+\lambda)n_1$;
\item[(ii)] $|E(G_i[W_1,W_2])| \le \lambda n^2$;
\item[(iii)] $|E(G_{3-i}[W_1])| \le \lambda n^2$.
\end{enumerate}
\end{defn}

\begin{defn}
For $i\in \{1,2\}$, $\lambda > 0$, and an $(n,s,\epsilon)$-suitable graph $G$, a partition $V(G)=V_j\cup U_1\cup U_2$, $j \in [s]$, of $V(G)$ is $(\lambda,i,2)$-{bad} if the following holds:
\begin{enumerate}
\item[(i)] $|E(G_i[V_j,U_1])| \le \lambda n^2$;
\item[(ii)] $|E(G_{3-i}[V_j,U_2])| \le \lambda n^2$;
\item[(iii)] $n_j=|V_j| \ge (1-\lambda)n$;
\item[(iv)] $(1-\lambda)n \le |U_1| \le (1+\lambda)n$;
\item[(v)] $(1-\lambda)n \le |U_2| \le (1+\lambda)n$.
\end{enumerate}
\end{defn}

Our stability theorem is:

\begin{theorem}\label{stability}
Let $n\geq s\geq 2$,  $0<\epsilon< 10^{-3}\gamma<10^{-6}$ and $n>100/\gamma$. Let $G$ be an $(n,s,\epsilon)$-{suitable}  graph.
If $\max\{\alpha'_*(G_1), \alpha'_*(G_2)\}\le n(1+\gamma)$, then 
for some $i\in [2]$ and $j \in [2]$, $V(G)$ has a $(68\gamma,i,j)$-bad partition.
\end{theorem}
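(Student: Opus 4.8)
Broadly this is the stability companion of the $x_1=x_2=n$ case of Theorem~\ref{2conn}, so the structure to recover should be a smoothed form of the two colourings in Figures~\ref{ex1} and~\ref{ex2}; the engine throughout is that any colour-$i$ edges in excess of what a bad partition permits can be spliced into a monochromatic connected matching of size above $n(1+\gamma)$, a contradiction. For the reduction, set $G'=G-\widetilde V$; by~\eqref{suit-s3}, $|\widetilde V|<\epsilon n$, every surviving $v\in V_i$ misses fewer than $2\epsilon n$ of its $N-n_i$ potential $G'$-edges, and $\alpha'_*(G'_k)\le n(1+\gamma)$ for $k=1,2$; we work in $G'$ and reinsert $\widetilde V$ at the end. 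Fix a surviving $v\in V_i$ and let $D_k=D_k(v)$ be its component in $G'_k$. Since $v$ is $G$-adjacent to all but $<2\epsilon n$ vertices of $V(G')\setminus V_i$, those vertices lie in $D_1\cup D_2$; and there is no $G'$-edge between $D_1\setminus D_2$ and $D_2\setminus D_1$ (an edge of either colour would merge one set into the other's component), so in the complete multipartite $G'$ these two sets lie in a single common part $V_t$ unless one is empty. Hence, up to $<2\epsilon n$ vertices and one part $V_t$, all of $V(G')$ lies in $D_1\cap D_2$ --- or, if $D_1\subseteq D_2$ or $D_2\subseteq D_1$, in a single component of one of $G'_1,G'_2$.

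\emph{A part of size about $n$ must appear.} A complete multipartite graph on $m$ vertices whose largest part has $p$ vertices has matching number $\min\{\lfloor m/2\rfloor,m-p\}$, so if such a graph lies inside one component of a colour class then either $m\le 2n(1+\gamma)+1$ or $p\ge m-n(1+\gamma)$. Apply this to $I_D$, the set left unmatched by a maximum matching of a $G'_k$-component $D$: $I_D$ is independent in $G_k$, hence monochromatic in colour $3-k$, hence induces such a graph inside one $G'_{3-k}$-component, so each colour-component either has $\le 4n(1+\gamma)+1$ vertices or ``hangs off'' a part of size $\ge|V(D)|-3n(1+\gamma)$. A matching of $G'[D_1\cap D_2]$ splits into a connected red matching in $D_1$ and a connected blue matching in $D_2$, so $\alpha'(G'[D_1\cap D_2])\le 2n(1+\gamma)$ and the same alternative holds for $D_1\cap D_2$. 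The red- and blue-component structures are also transversal: two vertices in ``incomparable'' cells of the red$\,\times\,$blue grid must share a part of $G$, so up to a small set $V(G')$ is covered by one colour-component plus one part. Combining this with Step~1, \eqref{suit-s1}, \eqref{suit-s2} and the $\Omega(n^2)$ edge count --- which keep $D_1\cup D_2$, and hence one of $D_1,D_2,D_1\cap D_2$, large --- forces a part $V_j$ with $n_j\ge(1-O(\gamma))n$; choose $V_j$ of largest size, so also $n_1\ge(1-O(\gamma))n$.

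\emph{Building the bad partition.} \textbf{(a)} If $V_j$ is dominant, $n_j\ge N-(2+O(\gamma))n$, put $X=V(G')\setminus V_j$; the matching bound on the $V_j$--$X$ colour classes gives $|X|\le(2+O(\gamma))n$, and every $G'$-edge runs $V_j$--$X$ or inside $X$. Sort $X$ by the majority colour of its (abundant) edges into $V_j$, absorbing the few ambiguous vertices into the error; for the correct labelling this gives $U_1\cup U_2$ with $|E(G_i[V_j,U_1])|,|E(G_{3-i}[V_j,U_2])|\le O(\gamma)n^2$ and $|U_1|,|U_2|\le(1+O(\gamma))n$ (a violation over-grows a connected monochromatic matching), hence $|U_1|,|U_2|,n_j\ge(1-O(\gamma))n$ from $|U_1|+|U_2|=N-n_j$ together with~\eqref{suit-s1} and~\eqref{suit-s2}; reinserting $\widetilde V$ gives a $(68\gamma,i,2)$-bad partition. \textbf{(b)} Otherwise Step~1 forces one of $D_1,D_2$ --- relabel colours so it is the $G'_1$-component $D_1$ --- to have $|V(D_1)|\le(2+O(\gamma))n$ while $D_1\cup D_2$ covers all but a small set; with $W_1:=V(D_1)$ (after pulling in the strays) and $W_2:=V(G)\setminus W_1$, the matching bound and~\eqref{suit-s1} give $(1-O(\gamma))n\le|W_2|\le(1+O(\gamma))n_1$, nearly all edges of $G$ inside $W_1$ have colour $i$, and nearly all edges of $G$ between $W_1$ and $W_2$ have colour $3-i$ --- a $(68\gamma,i,1)$-bad partition.

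\emph{Main obstacles.} Two steps carry the technical weight. First, producing the part $V_j$: the transversality of the colour-component structures, the rigidity lemma, and the $\Omega(n^2)$ edge count must be combined with care, since separately they yield only the weaker statement ``every colour-component has $O(n)$ vertices''. Second, case \textbf{(b)} above: converting ``$D_1$ is a small $G'_1$-component and the remainder is essentially one $G'_2$-component'' into a clean bipartition requires a split into subcases (which colour is confined to the small side, whether a part dominates $W_1$, the bipartite case $s=2$ where~\eqref{suit-s2} reads $n_2\ge 2n-1$, and the handling of $D_1\cap D_2$ and of colour-$2$ edges inside $W_1$), and in each subcase one must push the Erd\H os--Gallai edge bound, a K\"onig-type vertex-cover estimate, and the greedy enlargement of connected matchings to additive error $O(\gamma n)$ or $O(\gamma n^2)$. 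Collecting the roughly $68$ independent $O(\gamma)$-losses into the stated constant, and controlling the reinsertion of $\widetilde V$ when $N\gg n$, is the remaining bookkeeping.
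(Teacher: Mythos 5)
Your route is genuinely different from the paper's: you avoid the Gallai--Edmonds decomposition (the paper's central tool) in favour of the red-component~$\times$~blue-component grid around a fixed vertex, plus the observation that the vertices missed by a maximum matching of a colour component form an independent set in that colour. Both observations are correct. However, the step on which your entire case division rests is false: the hypotheses do \emph{not} force a part $V_j$ with $n_j\ge(1-O(\gamma))n$. Take $N=3n-1$ with all parts of size $O(1)$, split $V(G)$ transversally into $W_1$ of size $2n-1$ and $W_2$ of size $n$, put colour $1$ on $G[W_1,W_2]$ and colour $2$ on $G[W_1]\cup G[W_2]$. Then $\alpha'_*(G_1)=n$ and $\alpha'_*(G_2)=n-1$, so the hypothesis of Theorem~\ref{stability} holds, yet no part has size anywhere near $n$; this is exactly the colouring of Figure~\ref{ex1}, whose only bad partition is of the first type, the one whose essential conditions (ii) and (iii) involve no part of $G$. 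The claim is also a non sequitur from your own ``rigidity'' alternative: that alternative yields a part of size at least $|V(D)|-3n(1+\gamma)$, which is vacuous (indeed negative) for every colour component once $N\le 4n-2$, as it is after the minimality reductions, and in particular whenever $N=3n-1$. Since your case (a) presupposes the large part and your case (b) is reached only as ``$V_j$ exists but is not dominant,'' the example above falls through your case analysis entirely. The correct trichotomy is roughly: nearly bipartite; one colour has a component on $\approx 2n$ vertices whose complement is nearly independent in that colour (type-1 partition, no large part needed); or one colour has a large Gallai--Edmonds-type cover set $A$ (type-2 partition, and only here is a large part forced).

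Beyond this, both cases are outlines rather than arguments, as you concede in your closing paragraph. In case (a), ``sort $X$ by the majority colour of its edges into $V_j$, absorbing the few ambiguous vertices'' is precisely the step that can fail: a priori $\Omega(n)$ vertices could have their edges into $V_j$ split evenly between the colours, and ruling this out is where the paper does real work (the vertex-cover and matching-augmentation arguments on $G'_2[A,V'-A]$ in the proof of Lemma~\ref{big a}). In case (b), converting ``one colour component of size $\le(2+O(\gamma))n$ plus one big component of the other colour'' into the three quantitative conditions of a bad partition is the content of the paper's balanced-split argument (Lemmas~\ref{d1 lemma} and~\ref{small a}) and of the bipartite subsection, none of which is reproduced. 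As written, the proposal identifies a plausible alternative scaffolding but contains a concretely false structural claim and does not constitute a proof.
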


In the next section, we remind the reader of the notion and properties of the  Gallai--Edmonds decomposition, and in each of  the next three sections we prove one of the Theorems~\ref{2conn},~\ref{3conn} and~\ref{stability}.

\begin{remk}
One of the referees found a nicer and shorter proof for Theorem 4, using induction. Furthermore, the referee pointed out that a year after we have submitted our paper, Letzter~\cite{Letzter} introduced a method that could have shortened some of the proofs. In both cases, we preferred to stick to the original proofs.
\end{remk}

\section{Tools from graph theory}
We make extensive use of the Gallai--Edmonds decomposition  (called below {\em the GE-decompo\-sition} for short) of a graph $G$, defined below.

\begin{defn}
In a graph $G$, let $B$ be the set of vertices that are covered by every maximum matching in $G$. Let $A$ be the set of vertices in $B$ having at least one neighbor outside $B$, let $C = B - A$, and let $D = V(G) - B$. The {\em  GE-decomposition} of $G$ is the partition of $V(G)$ into the three sets $A, C, D$.
\end{defn}

\begin{defn}
A graph $G$ is {\em factor-critical} if $G$ has no perfect matching but for each $v \in V$, $G-v$ has a perfect matching. A {\em near-perfect matching} is a matching in which a single vertex is left unmatched.
\end{defn}

Edmonds and Gallai described important properties of this decomposition: 

\begin{theorem}[Gallai--Edmonds Theorem; Theorem 3.2.1 in \cite{LP}]\label{GE}
Let $A,C,D$ be the  GE-decomposition of a graph $G$. Let $G_1, \ldots, G_k$ be the components of $G[D]$. If $M$ is a maximum matching in $G$, then the following properties hold:
\begin{enumerate}
\item[(a)] $M$ covers $C$ and matches $A$ into distinct components of $G[D]$.
\item[(b)] Each $G_i$ is factor-critical and has a near-perfect matching in $M$.
\item[(c)] If $\emptyset \neq S \subseteq A$, then $N(S)$ intersects at least $|S|+1$ of $G_1, \ldots, G_k$.
\end{enumerate}
\end{theorem}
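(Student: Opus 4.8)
The plan is to locate the (essentially unique) large monochromatic component, record its matching structure via the Gallai--Edmonds decomposition, and read one of the two bad partitions off that record; a second use of Theorem~\ref{2conn} supplies the case split. First, Theorem~\ref{2conn} with $x_1=x_2=n$ (its hypotheses \eqref{2conn-1}, \eqref{2conn-2} are \eqref{suit-s1}, \eqref{suit-s2} together with $n_i\le n_1$) gives some $G_i$ with $\alpha'_*(G_i)\ge n$, and I relabel colours so that $i=1$. Put $m:=\alpha'_*(G_1)\in[n,(1+\gamma)n]$, fix a component $H$ of $G_1$ with $\alpha'(H)=m$, and set $R:=V(H)$, $Q:=V(G)\setminus R$; as $H$ is a whole component of $G_1$, every edge of $G$ between $R$ and $Q$ lies in $E_2$. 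The workhorse, used over and over, is this: \emph{if $X,Y$ are disjoint, all but $O(\epsilon n)$ of the edges of $G$ between them (resp.\ inside $X$) are blue, and after deleting $\widetilde V$ and a few low-degree vertices the remaining blue graph is connected, then $G_2$ has a connected matching of size $\min(|X|,|Y|)-O(\epsilon n)$ (resp.\ $\lfloor|X|/2\rfloor-O(\epsilon n)$); as $\alpha'_*(G_2)\le(1+\gamma)n$ and $\epsilon<10^{-3}\gamma$, this forces $\min(|X|,|Y|)\le(1+2\gamma)n$ (resp.\ $|X|\le(2+4\gamma)n$)}. By \eqref{suit-s3} almost every vertex misses fewer than $\epsilon n$ vertices outside its own part, so the connectivity/density hypotheses above are normally straightforward. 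Its first use, with $X=R$ and $Y=Q$ (all $R$--$Q$ edges blue, and $R$ carries a matching of size $\ge n$ so it spreads over several parts and $G[R,Q]$ is connected after the cheap deletions), gives $|Q|\le(1+2\gamma)n$, hence $|R|\ge N-(1+2\gamma)n\ge 2n-1-2\gamma n$.

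\textbf{Gallai--Edmonds structure and the dichotomy.} Apply Theorem~\ref{GE} to $H$: let $A,C,D$ be its GE-decomposition, $H_1,\dots,H_k$ the factor-critical components of $H[D]$, fix a maximum matching $M$ of $H$, and let $Z:=V(H)\setminus V(M)$ be the set of the $k-|A|$ vertices $M$ misses, so $|Z|=|R|-2m$ and $|A|+|C|\le 2m$. By Theorem~\ref{GE} every vertex of $D$ has its $G_1$-neighbours inside $A$ and inside its own $D$-component, so two vertices of $Z$ in different components are $G_1$-nonadjacent, hence blue-adjacent if adjacent in $G$; and every $Z$--$Q$ edge is blue. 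Feeding $Z\cup Q$, together with the blue edges it sends into $V(M)$, into the recurring estimate shows that unless all but $O(\gamma n)$ of $Z$ lies in a single part, $G_2$ would have a connected matching longer than $(1+\gamma)n$. Separately, Theorem~\ref{2conn} with $x_1=x_2=\lceil(1+\gamma)n\rceil+1$ together with \eqref{suit-s2} rules out having both $N\ge 3x_1-1$ and $N-n_1\ge 2x_1-1$, so \emph{either} (A) $N\le(3+4\gamma)n$, \emph{or} (B) $n_2+\dots+n_s\le(2+4\gamma)n$ and $V_1$ is a dominant part with $n_1\ge N-(2+4\gamma)n$.

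\textbf{Reading off the partitions.} In case (B), set $V_j:=V_1$ and split $V(G)\setminus V_1$ into $U_1\cup U_2$ by which colour dominates a vertex's edges to $V_1$; the recurring estimate (with $X=V_1$) shows neither side can be almost all of $V(G)\setminus V_1$ (else a monochromatic connected matching of size $\ge 2n-1-O(\gamma n)>(1+\gamma)n$ appears), so $|U_1|,|U_2|\in[(1-68\gamma)n,(1+68\gamma)n]$ and $n_j\ge(1-68\gamma)n$; the two edge-count conditions hold because, by \eqref{suit-s3} together with the matching bounds, essentially no vertex of $V(G)\setminus V_1$ sends $\ge 68\gamma n$ edges of \emph{both} colours to $V_1$, so at most $68\gamma n^2$ edges are offending --- a $(68\gamma,i,2)$-bad partition. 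In case (A) all sets in sight have size $O(n)$; the matching bounds and the localisation of $Z$ force the largest part to size $\ge(1-68\gamma)n$ and force $H$ to be, after a cleanup, close to a complete bipartite graph in colour $1$ between a set $W_2$ of size $(1\pm O(\gamma))n$ and $W_1:=V(G)\setminus W_2$ of size $\approx 2n$, with $Q$ small --- so $|E(G_1[W_1])|$ and $|E(G_2[W_1,W_2])|$ are both $\le 68\gamma n^2$, and $|W_2|\in[(1-68\gamma)n,(1+68\gamma)n_1]$, which is a $(68\gamma,2,1)$-bad partition.

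\textbf{The main obstacle.} The substance is the bookkeeping forced by a host with parts of wildly different sizes. Each time the recurring estimate is invoked one must verify that the auxiliary blue (or red) graph really is connected and really has a near-perfect matching --- both can fail when a region is almost swallowed by a huge part --- and one must establish the crude structural facts used above (that $Q$ and the ``excess'' of $H$ localise to a single part, and that the largest part is comparable to $n$ or larger) directly from the matching hypotheses. In case (B) the delicate point is ruling out ``mixed'' vertices, which could overshoot the ``$\le 68\gamma n^2$'' budget through a single enormous part; this is exactly where \eqref{suit-s3} and $\alpha'_*(G_1),\alpha'_*(G_2)\le(1+\gamma)n$ must be combined most carefully. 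The last, purely arithmetical, task is to check that every accumulated error really fits under the single constant $68\gamma$ rather than some larger multiple of $\gamma$.
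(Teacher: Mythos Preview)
Your proposal is not a proof of the stated theorem at all. The statement you were asked to prove is the classical Gallai--Edmonds Theorem (Theorem~\ref{GE}), which the paper merely \emph{quotes} from Lov\'asz--Plummer~\cite{LP} as a tool and does not prove; a proof of it would need to establish the structural properties (a)--(c) of the decomposition $(A,C,D)$ for an arbitrary graph $G$, via alternating paths, the notion of barriers, or the like.

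What you have written is instead a sketch of (something like) the proof of Theorem~\ref{stability}, the stability result for $2$-edge-coloured multipartite graphs. Your text invokes Theorem~\ref{GE} rather than proving it, talks about $(n,s,\epsilon)$-suitable graphs, bad partitions, the hypotheses \eqref{suit-s1}--\eqref{suit-s3}, and the bound $\alpha'_*(G_i)\le(1+\gamma)n$ --- none of which appear in the statement of Theorem~\ref{GE}. So the proposal simply addresses the wrong theorem and cannot be assessed as a proof of the Gallai--Edmonds Theorem.
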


For bipartite graphs, we use the simpler K\"{o}nig--Egerv\'{a}ry theorem, which we apply in two equivalent forms:
\begin{theorem}[K\"{o}nig--Egerv\'{a}ry Theorem; Theorem 1.1.1 in \cite{LP}]\label{konig-egervary}
In a bipartite graph, the number of edges in a maximum matching is equal to the number of vertices in a minimum vertex cover.

Equivalently, if  $H$ is a bipartite graph with bipartition $(U,V)$, then 
\[
	\alpha'(H) = \min_{U_1 \subseteq U}\{|U| - |U_1| + |N(U_1)|\}.
\]
\end{theorem}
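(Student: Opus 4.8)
\textbf{Proof plan for Theorem~\ref{stability}.}

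The plan is to run the argument in parallel with the proof of Theorem~\ref{2conn} (the case $x_1=x_2=n$), but now tracking quantitative slack. Let $G$ be $(n,s,\epsilon)$-suitable with $\max\{\alpha'_*(G_1),\alpha'_*(G_2)\}\le n(1+\gamma)$. First I would clean up the graph: by (S3), all but fewer than $\epsilon n$ vertices have degree at least $N-\epsilon n-n_i$ in $G$; removing $\widetilde V$ changes every connected-matching parameter by at most $\epsilon n\ll\gamma n$, so I may assume every vertex has large degree, and in particular $G$ (and $G[V\setminus V_i]$ for each $i$ when $s\ge 3$, by (S2)) is connected with large algebraic connectivity in the crude sense that any two vertices have many common neighbors. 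Then for each color $i\in\{1,2\}$ I would examine a maximum matching $M_i$ of $G_i$ and its Gallai--Edmonds decomposition $(A_i,C_i,D_i)$. Since $\alpha'_*(G_i)\le n(1+\gamma)$ and (by connectivity of the large-degree part) a maximum matching of $G_i$ restricted to the component containing it is essentially a maximum connected matching, we get $\alpha'(G_i)\le n(1+\gamma)+\epsilon n$, hence by the Gallai--Edmonds identity the ``deficiency'' quantity controlling $|A_i|$, the number $k_i$ of components of $G_i[D_i]$, and $|C_i|$ is pinned down: $|A_i|+|C_i|+\tfrac12(|D_i|-k_i)$ is within $O(\gamma n)$ of $n$ while $N\ge 3n-1$, so $k_i-|A_i|$ is large, roughly $N-2\alpha'(G_i)\ge n-O(\gamma n)$.

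The combinatorial core is the same dichotomy as in Theorem~\ref{2conn}, now made robust. Because $k_1-|A_1|$ and $k_2-|A_2|$ are both $\ge n-O(\gamma n)$, and the factor-critical components of $G_i[D_i]$ are ``independent-like'' in color $i$ between distinct components, I would argue that the singleton components dominate: the number of vertices lying in components of $G_i[D_i]$ of size $\ge 3$ is $O(\gamma n)$, since each such large component ``wastes'' matching edges relative to its contribution to $k_i$. So up to $O(\gamma n)$ vertices, $D_i$ is an independent set $I_i$ in $G_i$ with $|I_i|\ge n-O(\gamma n)$, $A_i\cup C_i$ is small-ish, and $N(I_i)$ in $G_i$ lies essentially inside $A_i$. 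Now I intersect the two pictures. Let $W:=I_1\cap I_2$ (after discarding the $O(\gamma n)$ bad vertices of each). For any $uv$ with $u,v\in W$: the edge $uv$ of $G$ is colored $1$ or $2$; if color $1$, then $u,v$ both in $I_1$ forces $uv\notin E_1$—contradiction unless one of $u,v$ is a bad vertex. Hence $W$ spans no edges of $G$ at all except $O(\gamma n\cdot N)$ of them, which (since $G$ is complete multipartite with small parts, as $n_1\le N-2n+1\le n+O(1)$ by (S1)+(S2)) forces $W$ to lie almost entirely inside a single part $V_j$, and $|V_j|\ge |W|\ge n-O(\gamma n)$: this is exactly the hypothesis appearing in the two bad-partition definitions.

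From here I would split into the two output cases. Case $s=2$ or $W$ ``uses up'' both small parts: set $W_2$ to be the part-or-near-part containing $W$ and $W_1=V\setminus W_2$; condition (i) of $(\lambda,i,1)$-bad holds since $(1-\lambda)n\le|W|\le|W_2|$ and $|W_2|\le n_1+O(\gamma n)\le(1+\lambda)n_1$; condition (ii), that $G_i[W_1,W_2]$ is sparse, holds because $W\subseteq I_i$ has its $G_i$-neighborhood inside $A_i$ which meets $W_1$ in $O(\gamma n)$ vertices; and (iii), that $G_{3-i}[W_1]$ is sparse, follows by the same reasoning applied to the other color (the vertices of $W_1$ outside the small Gallai--Edmonds sets lie in $I_{3-i}$). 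Choosing $i$ correctly is just bookkeeping. Case $s\ge3$ and there is a genuine third part: then $A_i\cup C_i$ essentially splits into two chunks of size $\approx n$ each, feeding $I_1$ and $I_2$ respectively; take $V_j$ as above, $U_1$ the color-$i$ neighborhood side, $U_2$ the color-$(3-i)$ side, yielding $(\lambda,i,2)$-bad with all five conditions coming from the size estimates $|A_i|,|C_i|,k_i$ already established. The main obstacle—and where I expect the real work to be—is the robust structure step: upgrading ``most Gallai--Edmonds components of $G_i[D_i]$ are singletons and $N(D_i)\subseteq A_i$'' from an exact statement (where it is forced by the matching-number identity) to a statement with $O(\gamma n)$ error terms, while simultaneously controlling the interaction with the complete-multipartite constraint and making sure all the accumulated constants collapse into the stated $68\gamma$ (and that $\epsilon<10^{-3}\gamma$, $n>100/\gamma$ give enough room). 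Everything else is careful but routine counting.
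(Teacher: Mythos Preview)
Your proposal addresses the wrong statement. The statement in question is Theorem~\ref{konig-egervary}, the K\"onig--Egerv\'ary Theorem, which the paper does not prove at all: it is quoted from~\cite{LP} as a standard tool and used repeatedly throughout the paper. There is nothing to compare here because the paper offers no proof of this theorem.

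What you have written is a proof plan for Theorem~\ref{stability}, the stability result. If that was your intent, your outline shares the high-level shape of the paper's argument (clean up via~\eqref{suit-s3}, use Gallai--Edmonds on one color, exploit that most components are singletons, then split into cases producing the two types of bad partitions), but it diverges substantially in the mechanics. The paper does \emph{not} take Gallai--Edmonds decompositions of both colors and intersect $I_1\cap I_2$; instead it works asymmetrically, first handling the nearly-bipartite case $s'=2$ separately (Claims~\ref{bip-small-r}--\ref{bip-large-r}), and for $s'\ge3$ takes the GE-decomposition only of $G'_1[C_1]$ and then branches on the size of $a=|A|$ via three lemmas (Lemmas~\ref{d1 lemma}--\ref{big a}), using an auxiliary ``$G'_1$-balanced split'' construction and Claim~\ref{stab-disc} rather than your $W=I_1\cap I_2$ idea. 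Your intersection argument has a real gap: the step ``$u,v\in I_1$ forces $uv\notin E_1$'' only holds if $u,v$ lie in distinct GE-components of $G_1[D_1]$, not merely in $I_1$, and more seriously you have no mechanism guaranteeing that $|I_1\cap I_2|$ is large---the sets $I_1,I_2$ each have size roughly $n-O(\gamma n)$ inside a host of size $\ge 3n-1$, so their intersection could easily be empty.
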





\section{Connected matchings in 2-edge-colorings (Theorem~\ref{2conn})}
In this section, we shall prove Theorem~\ref{2conn}. Let $G$ be  a complete $s$-partite graph $K_{n_1,\ldots,n_s}$ satisfying~\eqref{2conn-1} and~\eqref{2conn-2}. Let $V_1,\ldots,V_s$ be the parts of $G$ with $|V_i|=n_i$ for $i=1,\ldots,s$. 

We proceed by contradiction, assuming that there is a partition $E(G)=E_1\cup E_2$ such that 
\begin{equation}
	\alpha'_*(G_1) < x_1 \text{ and } \alpha'_*(G_2) < x_2. \label{no-cm}
\end{equation}
Among such edge partitions, we will find partitions with additional restrictions and study their properties. Eventually we will prove that such partitions do not exist. 

\subsection{Structure of $G$}

Among all $G$ and partitions $E(G)=E_1\cup E_2$ satisfying~\eqref{2conn-1}, \eqref{2conn-2} and~\eqref{no-cm}, choose one with the smallest $N$.

\begin{claim}\label{minN}
If $n_1\geq n_2\geq\ldots\geq n_s$, then either 
 $N=2x_1 + x_2 - 1$  or we have  $n_1=n_2$ and $N\leq 2x_1 + 2x_2 - s$.
\end{claim}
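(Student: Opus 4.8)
The plan is to use the minimality of $N$ together with the examples of Figures~\ref{ex1} and~\ref{ex2} as ``building blocks'' that can be deleted or contracted. Suppose $N > 2x_1 + x_2 - 1$, so that deleting a single vertex keeps $N$ above the threshold in~\eqref{2conn-1} unless we are extremely close. I would first argue that some part, say $V_s$ (the smallest), satisfies $n_s \ge 2$ or else handle the degenerate small-part cases directly: if every $n_i = 1$ then $G = K_N$ with $N \ge 2x_1+x_2$, and one can apply the $s=N$ reduction to a smaller complete multipartite graph obtained by merging two parts, contradicting minimality unless the bound $N \le 2x_1 + 2x_2 - s$ already holds. The main device is: delete one vertex $v$ from a carefully chosen part and check whether~\eqref{2conn-1} and~\eqref{2conn-2} still hold for the resulting graph $G' = G - v$ with the inherited coloring; since $\alpha'_*(G'_i) \le \alpha'_*(G_i) < x_i$, this would contradict the choice of $(G, E_1 \cup E_2)$ with smallest $N$ — unless deleting $v$ violates one of the two inequalities.

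Next I would analyze exactly when a vertex deletion is blocked. Deleting $v \in V_i$ drops $N$ by $1$ and drops $N - n_j$ by $1$ for every $j \ne i$ (and leaves $N - n_i$ unchanged). So~\eqref{2conn-1} fails for $G'$ only if $N = 2x_1 + x_2 - 1$ exactly, which is the first alternative in the claim; and~\eqref{2conn-2} fails for $G'$ only if there is some $j \ne i$ with $N - n_j = x_1 + x_2 - 1$ exactly. Thus, assuming $N \ge 2x_1 + x_2$, for \emph{every} choice of part $V_i$ with $n_i \ge 1$ and every vertex $v \in V_i$, there must be a part $V_j$ ($j \ne i$) that is ``tight'', i.e. $N - n_j = x_1 + x_2 - 1$. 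Playing this off for two different choices of $i$ (possible since $s \ge 2$): taking $i = 1$ forces some tight $V_j$ with $j \ge 2$, and taking $i$ to be the index of that tight part forces another tight part $V_{j'}$ with $j' \ne i$; combining $N - n_j = N - n_{j'} = x_1 + x_2 - 1$ gives $n_j = n_{j'}$, and since $n_j$ is as large as possible (the part sizes are sorted), chasing the inequalities yields $n_1 = n_j$, hence $n_1 = n_2$. Then from a single tightness relation $N - n_1 = x_1 + x_2 - 1$ together with $n_1 = n_2 \ge n_3 \ge \cdots \ge n_s \ge 1$ and $N = n_1 + n_2 + \cdots + n_s$, I get $N - n_1 - n_2 = x_1 + x_2 - 1 - n_1 \ge s - 2$... actually cleaner: $N = (N - n_1) + n_1 = x_1+x_2-1+n_1$ and $n_1 = n_2 \le N - n_1 - (s-2) = x_1 + x_2 - 1 - (s-2) = x_1+x_2-s+1$, so $N = x_1+x_2-1+n_1 \le 2x_1 + 2x_2 - s$, which is the second alternative.

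The step I expect to be the main obstacle is making the ``every part has a tight partner'' argument fully rigorous when the smallest parts are singletons and $s$ is large, because then deleting a vertex can reduce the number of parts, and one must be careful that the reduced graph is still genuinely $s'$-partite with $s' \ge 2$ and still satisfies~\eqref{no-cm}; I would handle this by instead deleting a vertex only from a part of size $\ge 2$ when available, and when \emph{all} parts have size $1$ (so $G = K_N$), appeal to the observation that $K_N$ with $N \ge 2x_1+x_2$ can be viewed as $K_{2, 1, 1, \ldots, 1}$ by merging, reducing to a case with fewer parts and smaller... no — merging increases nothing, so more carefully: $K_N = K_{n_1,\dots,n_s}$ with all $n_i=1$ already satisfies $N \le 2x_1+2x_2-s$ iff $N + s \le 2x_1+2x_2$, i.e. $2N \le 2x_1 + 2x_2$, i.e. $N \le x_1+x_2$; since $N \ge 2x_1+x_2 \ge x_1 + x_2$ we'd need $x_1 = 0$, excluded, so in fact when all parts are singletons we must have $N = 2x_1+x_2-1$, the first alternative, and this case cannot have $N$ strictly larger — this needs the separate (easy) verification that $K_N \mapsto (M_{x_1}, M_{x_2})$ for $N \ge 2x_1 + x_2$, which is the classical bipartite-split argument. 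I would isolate that as a small sub-claim and then the deletion argument above only needs to run when at least one part has size $\ge 2$.
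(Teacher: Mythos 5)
Your proposal is correct and uses essentially the paper's argument: delete one vertex from the largest part $V_1$, use the minimality of $N$ to force some $j\neq 1$ with $N-n_j=x_1+x_2-1$, and since $n_j\le n_1$ while $N-n_1\ge x_1+x_2-1$ this gives $n_1=n_2$ and $N-n_1=x_1+x_2-1$, whence $N\le 2x_1+2x_2-s$. The extra machinery for singleton parts is unnecessary: if $n_1=1$ then $G=K_N$ and $G-v$ is still a complete multipartite graph with at least $N-1\ge 2x_1+x_2-1\ge 2$ parts satisfying \eqref{2conn-1} and \eqref{2conn-2}, so the same deletion argument applies and no external appeal to $K_N\mapsto (M_{x_1},M_{x_2})$ is needed.
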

\begin{proof} Suppose $N>2x_1 + x_2 - 1$ and $v\in V_1$. Let $G'=G-v$. Then~\eqref{2conn-1}  and~\eqref{no-cm} hold for $G'$.
Hence by the minimality of $G$,~\eqref{2conn-2} does not hold for $G'$. 
Since~\eqref{2conn-2} does hold for $G$,  we conclude that  $n_1=n_2$ and $N-n_1=x_1 + x_2 - 1$. The last equality implies that $n_2=(x_1+x_2-1)-n_3-\ldots-n_s\leq x_1 +x_2 + 1 -s$. Hence
\[
	N=n_1+(N-n_1)=n_2+(x_1+x_2-1)\leq 2x_1 + 2x_2 - s,
\]
as claimed.
\end{proof}

\begin{claim}\label{s=2}
$G$ is not bipartite; that is, $s\geq 3$.
\end{claim}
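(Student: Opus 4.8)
\emph{Setup and reduction.} The plan is to argue by contradiction: suppose $s=2$, so that $G=K_{n_1,n_2}$ with parts $V_1,V_2$, $n_1\ge n_2$, and both $G_1$ and $G_2$ are bipartite with bipartition $(V_1,V_2)$. First I would pin down the part sizes. By~\eqref{2conn-2} with $i=1$ and $i=2$ we have $n_1,n_2\ge x_1+x_2-1$, and feeding this into Claim~\ref{minN} collapses the two alternatives to a single balanced host: if $N=2x_1+x_2-1$, then $n_1+n_2=2x_1+x_2-1$ together with $n_2\ge x_1+x_2-1\ge x_1$ and $n_1\ge n_2$ forces $n_1=x_1$, $x_2=1$, and $n_1=n_2=x_1$; otherwise $n_1=n_2$ with $2n_1\le 2x_1+2x_2-2$, which with $n_1\ge x_1+x_2-1$ gives $n_1=n_2=x_1+x_2-1$. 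Either way $G=K_{m,m}$ with $m:=x_1+x_2-1$. Writing $a:=x_1-1\ge b:=x_2-1\ge 0$, so $a+b=m-1$, it remains by~\eqref{no-cm} to derive a contradiction from $\alpha'_*(G_1)\le a$ and $\alpha'_*(G_2)\le b$ (note $m\ge 2$ and $a\ge 1$, since otherwise $G_1\cup G_2$ is edgeless).

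\emph{Locating a large monochromatic side via counting.} The crux is the claim that some component $C$ of $G_1$ has $|C\cap V_1|\ge b+1$, or some component $D$ of $G_2$ has $|D\cap V_2|\ge a+1$. Indeed, the components of $G_i$ partition $V_1\cup V_2$, and each component $C$ of $G_1$ satisfies $|E(C)|\le|C\cap V_1|\cdot|C\cap V_2|$; if every such $C$ had $|C\cap V_1|\le b$, then $|E(G_1)|\le b\sum_C|C\cap V_2|=bm$, and symmetrically, if every component $D$ of $G_2$ had $|D\cap V_2|\le a$, then $|E(G_2)|\le am$, whence $m^2=|E(K_{m,m})|\le(a+b)m=(m-1)m$, which is absurd. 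This is the step that converts the componentwise hypotheses into usable structure.

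\emph{Harvesting the contradiction via K\"onig--Egerv\'ary.} Consider first the case of a $G_2$-component $D$ with $A:=D\cap V_1$, $B:=D\cap V_2$, $|B|\ge a+1$; all $G$-edges leaving $D$ lie in $E_1$. If $|A|\le b$, then $G_1\supseteq K_{V_1\setminus A,\,B}$ is connected with both sides of size $\ge m-b=a+1$, giving $\alpha'_*(G_1)\ge a+1$, a contradiction. If $|A|\ge b+1$, choose a minimum vertex cover $P_1\cup P_2$ of $D$ with $P_i\subseteq V_i$, of size $\alpha'(D)\le\alpha'_*(G_2)\le b$; since $D=G_2[A\cup B]$, no $E_2$-edge joins $A\setminus P_1$ to $B\setminus P_2$, so $G_1$ contains the complete bipartite graph between these sets, together with $K_{V_1\setminus A,\,B}$ and $K_{A,\,V_2\setminus B}$. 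Because $|A\setminus P_1|\ge|A|-b\ge1$ and $|B\setminus P_2|\ge|B|-b\ge a+1-b\ge1$, these three complete bipartite graphs overlap in vertices, so they lie in a single component $L$ of $G_1$; a short greedy estimate — match $V_1\setminus A$ into $B$ using the $P_2$-vertices first, then $V_2\setminus B$ into $A$ using the $P_1$-vertices first, then the leftover of $A\setminus P_1$ into the leftover of $B\setminus P_2$ — yields $\alpha'(L)\ge m-b=a+1$ in every configuration, again a contradiction. The other case of the counting step, a $G_1$-component $C$ with $|C\cap V_1|\ge b+1$, is dispatched by the mirror-image argument with $G_1\leftrightarrow G_2$, $V_1\leftrightarrow V_2$, $a\leftrightarrow b$, producing $\alpha'_*(G_2)\ge b+1$.

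\emph{Expected obstacle.} Conceptually the key insight is the double-counting; once a monochromatic component that is large on one side is in hand, K\"onig--Egerv\'ary does the rest. The point that needs care is the greedy matching estimate for $\alpha'(L)$: one must check that it indeed gives at least $m-\min(|P_1|,|P_2|)\ge a+1$ across all relative sizes of $|A|,|B|,|P_1|,|P_2|$ versus $a,b,m$, including the boundary cases $V_1\setminus A=\emptyset$ or $V_2\setminus B=\emptyset$ (where $L$ has only two of the three pieces, but the bound persists), and likewise the bookkeeping in the mirror case must be verified to cover the symmetric boundary cases. I expect no essential difficulty there, only routine case-checking.
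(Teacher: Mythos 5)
Your argument is correct in substance but takes a genuinely different route from the paper's. After the same reduction to $n_1=n_2=x_1+x_2-1$, the paper splits on whether some $G_i$ satisfies $\alpha'(G_i)=\alpha'_*(G_i)$: if yes, Theorem~\ref{konig-egervary} gives a vertex cover of size at most $x_i-1$ whose complement forces $K_{x_{3-i},x_{3-i}}$ in the other color; if no, both colors have several nontrivial components, and fixing one nontrivial $G_2$-component forces two complete bipartite blocks of $G_2$, after which a short size count finishes. Your double count over components --- either some $G_1$-component meets $V_1$ in at least $x_2$ vertices or some $G_2$-component meets $V_2$ in at least $x_1$ vertices, else $|E_1|+|E_2|\le(a+b)m<m^2$ --- replaces that dichotomy and is arguably more self-contained; the price is a longer harvesting step with more sub-cases, and it loses the symmetry the paper exploits.

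That loss of symmetry is where the one real wrinkle sits, exactly at the point you deferred to ``routine case-checking.'' In Case 1 you use $|B\setminus P_2|\ge|B|-b\ge a+1-b\ge 1$, which relies on $a\ge b$. The literal mirror in Case 2 would need $|B'\setminus P'|\ge b+1-a\ge 1$ for $B'=C\cap V_1$ with a cover $P'$ of size at most $a$, and this fails when $a>b$ and the cover swallows all of $B'$: then the middle graph $K_{A'\setminus P',\,B'\setminus P'}$ is empty, the two outer complete bipartite graphs need not lie in one $G_2$-component, and the step-3 contribution to your greedy vanishes. The repair is short: that failure forces $|B'|\le a$, hence $|V_1\setminus B'|\ge m-a=b+1$, while $|A'|=|C\cap V_2|\ge a+1\ge b+1$ in this sub-case, so the single connected complete bipartite graph $G_2[V_1\setminus B',\,A']$ already yields a connected matching of size $b+1=x_2$, a contradiction. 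With that sub-case added, your proof goes through.
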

\begin{proof} Suppose $s=2$. Then by~\eqref{2conn-2}, $n_1 = N - n_2 \geq x_1+x_2-1$ and $n_2 = N - n_1 \geq x_1+x_2-1$. It is sufficient to consider the situation that
$n_1=n_2=x_1+x_2-1$.

Suppose that for some $i \in \{1,2\}$, $G_i$ has at most one non-trivial component, i.e., $\alpha'(G_i) = \alpha'_*(G_i)$ (and so by \eqref{no-cm}, $\alpha'(G_i)<x_i$). By Theorem~\ref{konig-egervary}, $G_i$ has a vertex cover $C$ with $|C| \le x_i - 1$. Hence all edges of $G$ connecting $V_1 - C$ with $V_2 - C$ are in $E_{3-i}$. Thus $G_{3-i}$ contains $K_{x_1+x_2-1-|C|,x_1+x_2-1-|C|}$, which in turn contains $K_{x_{3-i}, x_{3-i}}$. Therefore $\alpha'_*(G_{3-i}) \ge x_{3-i}$, contradicting~\eqref{no-cm}.

Therefore $\alpha'(G_i) > \alpha'_*(G_i)$ for both $i \in \{1,2\}$. This means that each of $G_1$ and $G_2$ has more than one  nontrivial component. Let $A$ be the vertex set of one nontrivial component in $G_2$ and $B=(V_1\cup V_2)-A$. For each $i\in\{1,2\}$, let $A_i=V_i\cap A$, $B_i=V_i\cap B$, $a_i=|A_i|$, and $b_i=|B_i|$. 

Then for both $i\in\{1,2\}$, $G_1[A_i\cup B_{3-i}]=K_{a_i,b_{3-i}}$. So if there is at least one edge connecting $A_1$ with $A_2$ or $B_1$ with $B_2$ in $G_1$, then $G_1$ is connected and so $\alpha'_*(G_1)=\alpha'(G_1)$, a contradiction. Thus, $G_2[A_1\cup A_2]=K_{a_1,a_{2}}$ and $G_2[B_1 \cup B_2]=K_{b_1,b_2}$.

This means that $\min\{a_1, a_2\} < x_2$ and $\min\{b_1, b_2\} < x_2$. By the symmetry between $a_1$ and $a_2$, we may assume $a_1 < x_2$. Then $b_1 = (x_1 + x_2-1) - a_1 \ge x_1 \ge x_2$. Hence $b_2 < x_2$, and $a_2 = (x_1 + x_2 - 1) - b_2 \ge x_1$. But $G_1$ contains $K_{b_1, a_2}$, so it contains $K_{x_1,x_1}$, a contradiction to~\eqref{no-cm}.
\end{proof}

\subsection{Components of $G_i$}

Next, by analyzing the  components of $G_1$ and $G_2$, we will reduce the problem to a case where $G_1$ and $G_2$ have at most one nontrivial component each. Then it will be enough to find a large matching in either $G_1$ or $G_2$; the matching will automatically be connected, which will contradict assumption~\eqref{no-cm}.

\begin{claim}\label{compl}
For each of  $i \in \{1,2\}$, if $G_i$ is disconnected, then $\alpha'_*(G_{3-i})=\alpha'(G_{3-i})$.
\end{claim}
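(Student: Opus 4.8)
The plan is to argue by contradiction: suppose $G_i$ is disconnected but $\alpha'_*(G_{3-i}) < \alpha'(G_{3-i})$, so $G_{3-i}$ has at least two nontrivial components. Pick a maximum matching $M$ in $G_{3-i}$; since this matching is not connected, its edges lie in $\ge 2$ components. Let $A$ be the vertex set of one nontrivial component of $G_{3-i}$ and $B = V(G) - A$; note every edge of $G$ between $A$ and $B$ lies in $E_i$. Because $G$ is complete $s$-partite, the bipartite graph $G[A,B]$ is exactly $K_{|A|,|B|}$ minus the ``same-part'' edges, i.e. $G_i[A,B]$ contains $K_{a_\ell, b_m}$ for all pairs of distinct parts $V_\ell \ne V_m$, where $a_\ell = |V_\ell \cap A|$, $b_m = |V_m \cap B|$. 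The idea is to combine this with the two-nontrivial-component structure of $G_{3-i}$ and the disconnectedness of $G_i$ to force a large complete bipartite subgraph in $G_i$, contradicting~\eqref{no-cm}.

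First I would set up notation as in the $s=2$ case of Claim~\ref{s=2}: write $A_\ell = V_\ell \cap A$, $B_\ell = V_\ell \cap B$ for each $\ell \in [s]$, with $a_\ell = |A_\ell|$, $b_\ell = |B_\ell|$. The key observation is that if $G_i$ has an edge joining $A$ to $B$ within the $A$-component's ``cross'' structure in a way that connects the two sides, or more simply: since $G_i$ itself is disconnected, let $X$ be the vertex set of one component of $G_i$ and $Y = V(G) - X$, so every $G$-edge between $X$ and $Y$ lies in $E_{3-i}$. Now I would play the two partitions $\{A,B\}$ (from $G_{3-i}$) and $\{X,Y\}$ (from $G_i$) against each other. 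Every cross edge of $G$ relative to $\{A,B\}$ is $i$-colored; every cross edge of $G$ relative to $\{X,Y\}$ is $(3-i)$-colored. So a $G$-edge that is simultaneously an $A$--$B$ cross edge and an $X$--$Y$ cross edge would need both colors, which is impossible unless no such edge exists in $G$ at all — meaning one of $A\cap X, A\cap Y, B\cap X, B\cap Y$ together with its ``complement'' leaves no complete-multipartite edges across, forcing these four sets to be aligned with the part structure in a restrictive way. Pushing this through, one of the four quadrants must be empty of a whole part, or the partitions $\{A,B\}$ and $\{X,Y\}$ essentially coincide on the non-trivial parts; in the coinciding case, $G_i$ restricted to $A$ (or $B$) is all of $K_{a_\ell,a_m}$-type within that side, and one estimates $\min$ over the part sizes to find a monochromatic complete bipartite graph of the required size using~\eqref{2conn-1}, \eqref{2conn-2}.

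Concretely, the counting I would do mirrors the end of Claim~\ref{s=2}: from $N \ge 2x_1 + x_2 - 1$ and $N - n_\ell \ge x_1 + x_2 - 1$ for all $\ell$, one shows that whichever side ($A$ or $B$, or $X$ or $Y$) we are forced into must contain, across two of its parts, at least $x_{3-i}$ vertices on the smaller side and $x_i$ on a (possibly different) larger side — extracting $K_{x_i, x_i}$ or $K_{x_{3-i}, x_{3-i}}$ in the appropriate color. Either extraction contradicts~\eqref{no-cm}. I expect the main obstacle to be the bookkeeping of the four-way interaction between the two partitions $\{A,B\}$ and $\{X,Y\}$: one must carefully enumerate which quadrants can be nonempty without creating a bichromatic edge, and in each surviving configuration verify that the size hypotheses~\eqref{2conn-1}–\eqref{2conn-2} really do force a monochromatic $K_{x_1,x_1}$ or $K_{x_2,x_2}$. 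The case $s \ge 3$ gives more parts to distribute vertices among, which helps (more cross edges are present in $G$), so the hard configurations are those where one part $V_\ell$ is very large and absorbs most of $A$ or $X$; these I would handle directly via~\eqref{2conn-2} applied to that large part.
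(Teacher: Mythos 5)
Your central observation is sound and genuinely different from the paper's argument: no edge of $G$ can cross both the partition $\{A,B\}$ induced by a nontrivial component of $G_{3-i}$ and the partition $\{X,Y\}$ induced by a component of $G_i$, since such an edge would have to lie in $E_i$ and in $E_{3-i}$ simultaneously. (The paper instead argues directly, without assuming $G_{3-i}$ has two nontrivial components: it fixes one component $W_1$ of $G_i$, notes that every $G$-edge leaving $W_1$ lies in $G_{3-i}$, and cases on whether $W_1$ meets one, two, or at least three parts to conclude that all edges of $G_{3-i}$ fall into a single component.)

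The gap is that the four-quadrant case analysis, which is the entire content of your proof, is never carried out, and the endgame you describe for it would not work. You propose to finish by mirroring the counting at the end of Claim~\ref{s=2}, using \eqref{2conn-1}--\eqref{2conn-2} to extract a monochromatic $K_{x_i,x_i}$; but no quantitative extraction is needed or available here, and the sizes $x_1,x_2$ play no role. What actually closes the case analysis is the combination of three facts you never isolate: (i) $s\ge 3$, already secured by Claim~\ref{s=2} and indispensable (for $s=2$ the quadrant argument collapses: in the proper $2$-edge-coloring of $K_{2,2}$ both color classes are disconnected and one has two nontrivial components); (ii) $A$ meets at least two parts of $G$, since it contains an edge of $G_{3-i}$; and (iii) $B$ \emph{also} meets at least two parts, because it contains the second nontrivial component of $G_{3-i}$. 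Given these, every configuration is impossible: if all four quadrants are nonempty, then each diagonal pair $(A\cap X)\cup(B\cap Y)$ and $(A\cap Y)\cup(B\cap X)$ must lie inside a single part of $G$, forcing $s=2$ and contradicting (i); if instead, say, $A\cap Y=\emptyset$, then $Y\subseteq B$, so every $G$-edge from $A$ to $Y$ would need both colors, hence $A\cup Y$ lies in one part, contradicting (ii); and the cases $A\cap X=\emptyset$, $B\cap X=\emptyset$, $B\cap Y=\emptyset$ are eliminated the same way using (ii) or (iii). Without (iii) in particular, the configurations $X\subseteq A$ and $Y\subseteq A$ cannot be excluded, so the ``surviving configurations'' you defer to later counting would in fact remain, and the counting you gesture at does not resolve them.
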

\begin{proof} Suppose $G_1$ is disconnected (the proof for the case when $G_2$ is disconnected is similar). Let $W_1$ induce a  component of $G_1$ and $W_2=V(G)-W_1$. We consider three cases:

\medskip\noindent\textbf{Case 1:} 
For some $j\in [s]$, $W_1\subseteq V_j$. Since $V_j$ is independent, $W_1=\{v\}$ for some $v\in V_j$.
Then all vertices in $V(G_2)-V_j$ are adjacent to  $v$ in $G_2$. So, $G_2$ has a component
$D$ containing
$V(G_2)-V_j+v$. Since $V_j$ is independent, every edge in $G_2$ has a vertex in $V(G)-V_j$, and hence lies in $D$.

\medskip\noindent\textbf{Case 2:}
For some distinct $j_1,j_2\in [s]$, $W_1\subseteq V_{j_1}\cup V_{j_2}$ and $W_1$ has a vertex $v_1\in V_{j_1}$ and a vertex $v_2\in V_{j_2}$.  By Claim~\ref{s=2},
$V(G)-V_{j_1}-V_{j_2}\neq\emptyset$, and by the case, each vertex in $V(G)-V_{j_1}-V_{j_2}$ is adjacent in $G_2$ to both $v_1$ and $v_2$.
Thus, a component $D$ of $G_2$ contains $W_1\cup (V(G)-V_{j_1}-V_{j_2})$. Furthermore, each vertex in $V_{j_1}-W_1$ 
is adjacent in $G_2$ to $v_2$, and each vertex in $V_{j_2}-W_1$
is adjacent in $G_2$ to $v_1$. It follows that $G_2$ is connected.

\medskip\noindent\textbf{Case 3:}
For some distinct $j_1,j_2,j_3\in [s]$, $W_1$  has a vertex $v_\ell\in V_{j_\ell}$ for all $\ell\in [3]$. Then each vertex in $W_2$
 is adjacent in $G_2$ to at least two of $v_1,v_2$ and $v_3$. Thus, a component $D$ of $G_2$ contains 
 $W_2$. If each $v\in W_1$ has in $G_2$ a neighbor in $W_2$, then $D=V(G)$, i.e. $G_2$ is 
 connected. Suppose there is $v\in W_1$  that has no neighbors in $W_2$ in $G_2$. We may assume $v\in V_{j_1}$.
 Then $W_2\subset V_{j_1}$. This means all vertices in $V(G)-D$ are in $V_{j_1}$.
Since $V_{j_1}$ is independent, every edge in $G_2$ has a vertex in $V(G)-V_{j_1}$, and hence lies in $D$.
\end{proof}

Claim~\ref{compl} implies that $\alpha'_*(G_i) = \alpha'(G_i)$ holds for at least one $i$. This equality does not necessarily hold for both $i=1$ and $i=2$, but we show that it is enough to prove Theorem~\ref{2conn} in the case where it does.

\begin{claim}\label{best-coloring}
If there are  partitions $E(G)=E_1\cup E_2$ of $E(G)$ such that  $G_1:=G[E_1]$ and $G_2=G[E_2]$ satisfy~\eqref{no-cm}, then 
some such partition in addition  satisfies all of the following:
\begin{itemize}
\item $\alpha'_*(G_1) = \alpha'(G_1)$ and $\alpha'_*(G_2) = \alpha'(G_2)$;
\item $G_1$ has the  GE-decomposition $(A,C,D)$ such that if $D_0 = C$ and $D_1, D_2, \dots, D_k$ are the components of $G_1[D]$ with $|D_1| \ge |D_2| \ge \dots \ge |D_k|$, then $G_1 - A$ has at least three components, and $G_2[D_j]$ is empty for $j=0,1,\dots,k$.
\end{itemize}
\end{claim}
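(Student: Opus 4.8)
The plan is to start from an arbitrary partition $E(G)=E_1\cup E_2$ satisfying~\eqref{no-cm} and transform it in two stages, keeping~\eqref{no-cm} intact throughout. First I would recolor to achieve $\alpha'_*(G_i)=\alpha'(G_i)$ for both $i$. By Claim~\ref{compl}, at least one of the two colors, say color $1$, already has $\alpha'_*(G_1)=\alpha'(G_1)$; equivalently, $G_1$ has at most one nontrivial component. If $G_2$ also has this property we are done with stage one, so assume $G_2$ has $\ge 2$ nontrivial components. The idea is to "clean up" $G_2$ without creating a large connected matching in $G_1$: take a maximum matching $M_2$ of $G_2$ and note $|M_2|=\alpha'(G_2)<x_2$; since any connected matching of $G_2$ has $<x_2$ edges while $G_2$ itself might have a larger (disconnected) matching, we want to move the "extra" edges of $G_2$ into $E_1$ or redistribute so that the surviving $G_2$ is essentially a single nontrivial component. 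Concretely, pick a largest nontrivial component $D^*$ of $G_2$; recolor every edge of $G_2$ not inside $D^*$ with color $1$. We must check this keeps $\alpha'_*(G_1)<x_1$: the edges we add to $G_1$ lie in $V(G)\setminus D^*$ together with edges of $G_2$ having at most... — more carefully, by the casework in the proof of Claim~\ref{compl}, whenever $G_2$ was disconnected, $G_1$ was connected, so adding edges to $G_1$ keeps it connected and we only need $\alpha'(G_1)<x_1$, which requires re-deriving a matching bound; this is the point where I expect to lean on the minimality of $N$ (Claim~\ref{minN}) and on counting via K\"onig--Egerv\'ary / Gallai--Edmonds. If after this $G_1$ acquired $\ge 2$ nontrivial components, apply Claim~\ref{compl} again to swap roles; iterate, using a potential function (e.g.\ total number of nontrivial components, or $N$ minus something) to guarantee termination, arriving at a partition with $\alpha'_*(G_i)=\alpha'(G_i)$ for both $i$.

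For the second stage, fix such a partition and consider the GE-decomposition $(A,C,D)$ of $G_1$, with $D_0:=C$ and $D_1,\dots,D_k$ the components of $G_1[D]$ ordered by size. I need two things: that $G_1-A$ has $\ge 3$ components, and that $G_2[D_j]$ is empty for every $j$. For the emptiness: each $D_j$ with $j\ge1$ is factor-critical (Theorem~\ref{GE}(b)) hence spans a connected subgraph of $G_1$; if $G_2[D_j]$ contained an edge $e$, I would recolor $e$ into color $1$ — this keeps $\alpha'_*(G_2)<x_2$ trivially, and I must argue $\alpha'_*(G_1)$ stays $<x_1$. Adding an edge inside the component $D_j$ of $G_1[D]$ does not merge components of $G_1-A$ and (since $D_j$ is factor-critical) does not increase $\alpha'(G_1)$, so $\alpha'_*(G_1)=\alpha'(G_1)$ is preserved below $x_1$. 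For $D_0=C$: $C$ is covered by every maximum matching and $G_1[C]$ may be disconnected, but any edge of $G_2$ inside $C$ can similarly be absorbed into $G_1$ without raising $\alpha'(G_1)$ (an edge inside $C$ is "useless" for enlarging a matching since $C$ is already saturated) — here I would invoke that $M$ covers $C$ and that $G[A\cup C]$ has a matching saturating $C$ into $A\cup C$, so no new augmenting structure appears. After finitely many such moves, $G_2[D_j]=\emptyset$ for all $j$; and recoloring may have changed the GE-decomposition, so I would phrase this as: among all partitions satisfying stage-one's conclusions, pick one minimizing, say, $\sum_j |E(G_2[D_j])|$ (with $D_j$ the GE-components of $G_1$), and argue minimality forces all these to be zero.

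Finally, the requirement that $G_1-A$ has at least three components: if $G_1-A$ had at most two, then combined with Theorem~\ref{GE}(c) ($|N(S)\cap\{D_1,\dots,D_k\}|\ge|S|+1$ for $\emptyset\ne S\subseteq A$, forcing $A=\emptyset$ or $k\ge2$) and with $G_2[D_j]$ empty, one could show the whole bipartite-like structure between the $D_j$'s is $2$-colored with color $2$ dominating, producing a large $\alpha'_*(G_2)\ge x_2$ — contradicting~\eqref{no-cm}. More precisely, with $\le 2$ components of $G_1-A$ and the size conditions~\eqref{2conn-1},~\eqref{2conn-2}, the complement of $G_1$ (which contains $G_2$ on the relevant vertex sets) contains a large complete bipartite or complete multipartite graph; a short computation using $N\ge 2x_1+x_2-1$ and $N-n_i\ge x_1+x_2-1$ yields $K_{x_2,x_2}\subseteq G_2$ or a connected $M_{x_2}$ in $G_2$. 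I expect this last step — showing "few components of $G_1-A$" forces a big monochromatic connected matching in color $2$ — to be the main obstacle, since it is where the two numerical hypotheses must be used in tandem and where the difference between $\alpha'$ and $\alpha'_*$ (connected vs.\ ordinary matching) is most delicate; the earlier recoloring reductions are comparatively mechanical once the right potential function is chosen.
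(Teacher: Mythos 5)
Your overall architecture (first force $\alpha'_*(G_i)=\alpha'(G_i)$ for both colors, then clean out the edges of $G_2$ inside the Gallai--Edmonds pieces of $G_1$) matches the paper, and your stage-two observation that recoloring an edge inside a factor-critical component $D_j$ cannot raise $\alpha'(G_1)$ is exactly the right mechanism. But stage one has a genuine gap. You propose to pick the largest nontrivial component $D^*$ of $G_2$ and recolor every edge of $G_2$ outside $D^*$ into color $1$, and you admit you have not verified that this keeps $\alpha'(G_1)<x_1$. This cannot be patched by ``minimality of $N$'': the hypothesis~\eqref{no-cm} only bounds the \emph{connected} matching number $\alpha'_*(G_2)$, not $\alpha'(G_2)$, so the edges of $G_2$ living outside $D^*$ may themselves contain an arbitrarily large matching, and dumping them all into $E_1$ can push $\alpha'(G_1)$ past $x_1$. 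Your fallback --- iterate Claim~\ref{compl} with an unspecified potential function --- is not shown to terminate, and you never actually establish the needed conclusion that after cleanup $G_2$ has at most one nontrivial component. The paper resolves this by running the recoloring in the \emph{opposite} direction: starting from the color $i$ that already satisfies $\alpha'_*(G_i)=\alpha'(G_i)$, it takes the GE-decomposition of $G_i$ and moves into $E_i$ only the ``harmless'' edges of $G_{3-i}$ (those incident to $A$ or with both ends in one $D_j$); the same decomposition then certifies $\alpha'(G_i)$ is unchanged, $G_{3-i}$ only loses edges, and a short structural argument --- using that every surviving edge of $G_{3-i}$ avoids $A$ and joins two distinct components of $G_i-A$, of which there are at least three --- shows $G_{3-i}$ is left with a single nontrivial component. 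That structural argument is the missing ingredient in your plan.

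A secondary issue: your treatment of ``$G_1-A$ has at least three components'' is too coarse. The paper's case analysis is short ($k\ge 2$ always; $C\neq\emptyset$ or $A\neq\emptyset$ each supply a third component), but the remaining subcase $A=C=\emptyset$, $k=2$ with one component a single isolated vertex is delicate: there one computes $\alpha'_*(G_i)=N/2-1$, deduces $x_{3-i}\le 1$, and then uses~\eqref{2conn-2} to find a $G_{3-i}$-edge at the isolated vertex, contradicting~\eqref{no-cm}. Your proposed route via ``a large complete bipartite graph in $G_2$'' does not obviously see this subcase, since the two components need not both be large.
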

\begin{proof}
Suppose that $E(G)=E_1\cup E_2$ is a partition of $E(G)$ such that $G_1:=G[E_1]$ and $G_2=G[E_2]$ satisfy~\eqref{no-cm}.

By Claim~\ref{compl}, there is some $i \in \{1,2\}$ such that $\alpha'_*(G_i) = \alpha'(G_i)$.
Pick such an $i$.

Let $(A,C,D)$ be the  GE-decomposition of $G_i$; let $D_0 = C$, $a = |A|$, and let $D_1, D_2, \dots, D_k$ be the components of $G_i[D]$.

We have $N = |V(G)| = |V(G_i)| \ge 2x_1 + x_2 - 1 \ge 2x_i$, and yet by assumption~\eqref{no-cm}, $\alpha'(G_i) < x_i$. Therefore every  maximum matching in $G_i$ leaves at least two vertices uncovered.  Since by Theorem~\ref{GE},  the number of uncovered vertices is $k-a$, this yields $k \ge 2$.

We want to show that $G_i - A$ actually has at least $3$ components. Since $k\ge 2$, $D_1$ and $D_2$ are two of them. If $C = D_0 \ne \emptyset$, then it is a third component of $G_i-A$; if $A \ne \emptyset$, then $k \ge a+2 \ge 3$. If $A = C = \emptyset$ and $k=2$, then $D_1$ and $D_2$ are components of $G_i$ as well. By assumption, $\alpha'_*(G_i) = \alpha'(G_i)$, so $D_1$ and $D_2$ cannot both be nontrivial components.

This leaves the possibility that $D_2$ is an isolated vertex of $G_i$ and $D_1$ is the rest of $V(G)$, which we also will rule out. In this case, by Theorem~\ref{GE}, a maximum matching in $G_i$ covers all vertices of $D_1$ except for one; we have 
\[
	\alpha'_*(G_i) = \frac N2 - 1 \ge \frac{2x_1 + x_2 - 1}{2} - 1 \ge x_i + \frac{x_{3-i}-3}{2}.
\]
But by~\eqref{no-cm}, $\alpha'_*(G_i) \le x_i - 1$, which means $\frac{x_{3-i}-3}{2} \le -1$, or $x_{3-i} \le 1$. By~\eqref{2conn-2}, the degree of the single vertex in $D_2$ is at least $N-n_1\geq x_1 + x_2 - 1 \ge 1$, and it is isolated in $G_i$; therefore $\alpha'_*(G_{3-i}) \ge 1 \ge x_{3-i}$, violating~\eqref{no-cm}. Therefore $G_i - A$ has at least three components.

Let $Q$ be the set of edges in $G_{3-i}$ that are either incident to $A$ or else have both endpoints in the same $D_i$ (including $D_0$). Modify the partition $E_1 \cup E_2$ by removing all edges of $Q$ from $E_{3-i}$ and adding them to $E_i$ instead; let $E_1' \cup E_2'$ be the resulting partition, with $G_1' = G[E_1']$ and $G_2' = G[E_2']$. The same  GE-decomposition $(A,C,D)$ witnesses that $\alpha'(G'_i) = \alpha'(G_i) = \alpha'_{*}(G_i) < x_i$; meanwhile, $G_{3-i}'$ is a subgraph of $G_{3-i}$, so $\alpha'_*(G_{3-i}') \le \alpha'(G_{3-i}) < x_{3-i}$. Therefore the resulting partition still satisfies~\eqref{no-cm}.

Next, we show that $G_{3-i}'$ has at most one nontrivial component: equivalently, that $\alpha'_*(G'_{3-i}) = \alpha'(G_{3-i})$. Suppose for the sake of contradiction that $G_{3-i}'$ has at least two nontrivial components, say $H_1$ and $H_2$. Let $u_1 u_2 \in E(H_1)$ and $v_1v_2 \in E(H_2)$.

We may rename the parts of $G$ so that $u_1\in V_1$ and $u_2\in V_2$. Suppose $u_1\in D_j$ and $u_2\in D_{j'}$. By the definition of $Q$, $j'\neq j$. So, if $v_1\notin V_1\cup V_2$ or $v_1\notin D_j\cup D_{j'}$, then $v_1u_1\in E(G'_{3-i})$ or
$v_1u_2\in E(G'_{3-i})$, and hence $H_2=H_1$. The same holds for $v_2$. Thus, since $v_1v_2\in E(G'_{3-i})$, we may assume that $v_1\in V_1\cap D_{j'}$ and
$v_2\in V_2\cap D_{j}$. We proved earlier that $G_i - A$ has at least three components; therefore we can choose $D_{j''} \ne D_j, D_{j'}$ with a vertex $w \in D_{j''}$. By the symmetry between $V_1$ and $V_2$, we may assume $w\notin V_1$. Then
$w$ is adjacent in $G'_{3-i}$  with both $u_1$ and $v_1$, a contradiction.

The resulting partition $E_1' \cup E_2'$ satisfies $\alpha'_*(G'_1) = \alpha'(G'_1)$ and $\alpha'_*(G'_2) = \alpha'(G'_2)$. The second condition of Claim~\ref{best-coloring} also holds if we had $i=1$ in the proof above. If we had $i=2$, then we may repeat this procedure with $i=1$, finding a third partition $E_1'' \cup E_2''$. This still satisfies $\alpha'_*(G''_1) = \alpha'(G''_1)$ and $\alpha'_*(G''_2) = \alpha'(G''_2)$, but now the Gallai--Edmonds partition of $G''_1$ has the properties we want, proving the claim.
\end{proof}

\subsection{Completing the proof of Theorem~\ref{2conn}}

From now on, we assume that the partition $E_1 \cup E_2$ satisfies the conditions guaranteed by Claim~\ref{best-coloring}. Let $(A,C,D)$ and $D_0, D_1, \dots, D_k$ be as defined in the statement of Claim~\ref{best-coloring}; let $a = |A|$. We can now replace assumption~\eqref{no-cm} by the stronger condition
\begin{equation}
	\alpha'(G_1) < x_1 \text{ and } \alpha'(G_2) < x_2. \label{no-matching}
\end{equation}
The following claim allows us to gradually grow a monochromatic connected matching $R$.

\begin{claim}\label{r-matching}
Let $R$ be a matching in $G_2 - A$. Assume that  $I \ne \emptyset$ is a set of isolated vertices in $G_1 - A$, with $I \cap V(R) = \emptyset$ and $A \cup I \cup V(R) \ne V(G)$.
Suppose that $R$ cannot be made larger by either of the following operations:
\begin{itemize}
\item[(a)] Adding an edge of $G_2$ which has one endpoint in $I$ and the other outside $A \cup I \cup V(R)$.
\item[(b)] Replacing an edge $e \in R$ with two edges $e', e'' \in E(G_2 - A)$ such that $e \subset e' \cup e''$ and $e' \cup e''$ has  one vertex in $I$ and
one in $V(G)-A-R-I$.
\end{itemize}
Then $G$ violates~\eqref{no-matching}.
\end{claim}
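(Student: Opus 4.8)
The plan is to assume that $R$ admits neither enlargement operation and to deduce that $\alpha'(G_1)\ge x_1$ or $\alpha'(G_2)\ge x_2$, i.e. that $G$ violates \eqref{no-matching}. I will use freely the structure supplied by Claim~\ref{best-coloring} and the paragraph fixing $(A,C,D)$ and $D_0,D_1,\dots,D_k$: the set $A$ consists of isolated vertices of $G_2$; inside $C\cup D$ the graph $G_2$ is obtained from $G[C\cup D]$ by deleting all edges whose endpoints lie in a common block $D_j$ (so $G_2[C\cup D]$ is exactly the set of $G$-edges joining two different blocks); each $D_j$ is factor-critical in $G_1$; and, by part (a) of the Gallai--Edmonds Theorem~\ref{GE}, $\alpha'(G_1)\ge |A|=a$. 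In particular, if $a\ge x_1$ we are done, so from now on I assume $a\le x_1-1$. Note also that any vertex that is isolated in $G_1-A$ must form a singleton block $D_j=\{u\}$; hence every $u\in I$ is a singleton block, and its $G_2$-neighborhood is exactly $(C\cup D)\setminus V_{\ell(u)}$, where $V_{\ell(u)}$ denotes the part of $G$ containing $u$.

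The first step is to localize $I$ using the failure of the first operation. Put $Z:=V(G)-A-I-V(R)$; since $A$ is disjoint from $C\cup D$ and $I,V(R)\subseteq C\cup D$, we have $Z=(C\cup D)\setminus(I\cup V(R))$, and $Z\neq\emptyset$ by hypothesis. The failure of the first operation means that no $u\in I$ has a $G_2$-neighbor outside $A\cup I\cup V(R)$, i.e. no $G_2$-neighbor in $Z$; as $Z\subseteq C\cup D$ and $N_{G_2}(u)=(C\cup D)\setminus V_{\ell(u)}$, this forces $Z\subseteq V_{\ell(u)}$ for every $u\in I$. Because $Z\neq\emptyset$, all vertices of $I$ lie in one common part $V_\ell$ of $G$ and $Z\subseteq V_\ell$; thus $I\cup Z\subseteq V_\ell$.

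The second, and I expect hardest, step is to show from the failure of the second operation that every edge of $R$ has an endpoint in $V_\ell$. Suppose $e=pq\in R$ with $p,q\notin V_\ell$; fix any $u_0\in I$ and any $z\in Z$, and let $D_{j^*}$ be the block containing $z$. Since $u_0$ is a singleton block lying in $V_\ell$ and $p,q\notin V_\ell$, both $pu_0$ and $qu_0$ are edges of $G_2-A$; and, as $z\in V_\ell$ while $p,q\notin V_\ell$, the edge $pz$ (resp. $qz$) lies in $G_2-A$ exactly when $p\notin D_{j^*}$ (resp. $q\notin D_{j^*}$). Hence, if $q\notin D_{j^*}$ we may replace $e$ by the two edges $pu_0,\,qz$, and if $p\notin D_{j^*}$ we may replace $e$ by $pz,\,qu_0$; in either case the four vertices involved are distinct and avoid $A$, the new edge system is a matching of size $|R|+1$ in $G_2-A$, and it contains one vertex of $I$ and one of $Z$ — so this is a legitimate instance of the second operation. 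As that operation fails, $p,q\in D_{j^*}$ for every $z\in Z$. If $Z$ meets two different blocks this is impossible; if $Z$ lies in a single block $D_{j^*}$, then $p,q\in D_{j^*}$, so $e\in E(G_2[D_{j^*}])$, contradicting that $G_2[D_{j^*}]$ is empty. Thus no such edge $e$ exists, and every edge of $R$ meets $V_\ell$. (The degenerate situations — $C$ a single vertex, or $R$ empty — cause no trouble, since the argument only uses that each $u\in I$ is a singleton block and that $Z\neq\emptyset$.)

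It remains to count inside $V_\ell$. Since each edge of $R$ meets $V_\ell$ and $R$ is a matching, $|R|\le |V(R)\cap V_\ell|$. The sets $I$, $Z$ and $V(R)\cap V_\ell$ are pairwise disjoint subsets of $V_\ell$, so $|I|+|Z|+|V(R)\cap V_\ell|\le n_\ell$; moreover $I\cup Z=(C\cup D)\setminus V(R)$ with $V(R)\subseteq C\cup D$, so $|I|+|Z|=|C\cup D|-2|R|=N-a-2|R|$. Combining these gives $N-a-|R|\le n_\ell$, and then \eqref{2conn-2}, applied to the part $V_\ell$, yields
\[
	|R| \ \ge\ N-a-n_\ell \ \ge\ (x_1+x_2-1)-a \ \ge\ (x_1+x_2-1)-(x_1-1)\ =\ x_2 .
\]
Therefore $\alpha'(G_2)\ge |R|\ge x_2$, which contradicts \eqref{no-matching} and completes the proof. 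The only real work lies in the third paragraph — translating the two forbidden operations into the clean combinatorial statements ``$I\cup Z\subseteq V_\ell$'' and ``every edge of $R$ meets $V_\ell$''; once this is done, the final estimate is routine.
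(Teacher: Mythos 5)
Your proof is correct and follows essentially the same strategy as the paper: use the failure of the first operation to place $I$ and the uncovered set $Z$ in a common part $V_\ell$, use the failure of the second operation (via the fact that $G_2$ contains every $G$-edge joining distinct Gallai--Edmonds blocks and $G_2[D_j]=\emptyset$) to show every edge of $R$ meets $V_\ell$, and then deduce $|R|\ge (x_1+x_2-1)-a$ and finish with the matching of $G_1$ saturating $A$. The only cosmetic differences are that you dispatch the case $a\ge x_1$ at the outset and obtain the final bound by counting disjoint subsets of $V_\ell$ rather than by counting the $G_2$-neighbors of a single vertex of $I$, which yields the identical inequality.
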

\begin{proof}
Let $u$ be a vertex of $G$ outside $A \cup I \cup V(R)$ and let $v \in I$. Since $v$ is an isolated vertex in $G_1 - A$, $uv$ cannot be an edge of $G_1$; by the maximality of $R$, $uv$ cannot be an edge of $G_2$. Therefore  some part $V_i$ of $G$ contains both $u$ and $v$.

Next, we show that 
\begin{equation}
	 \text{\em every edge of $R$ has one endpoint in $V_i$.  } \label{endpoint}
\end{equation}
 Suppose not; let $w_1w_2 \in R$ be an edge with $w_1, w_2 \notin V_i$. Note that $uw_1, uw_2, vw_1, vw_2$ are all edges of $G$. Since $w_1w_2 \in E_2$ and $G_2[D_j]$ is empty for $j = 0,1, \ldots, k$, $w_1$ and $w_2$ cannot be in the same component of $G_1 - A$. Therefore $uw_1, uw_2$ cannot both be in $E_1$; without loss of generality, $uw_1 \in E_2$. Since $v$ is isolated  in $G_1-A$, the edge $w_1w_2 \in R$ can be replaced by the edges $uw_1, vw_2 \in E_2$, violating the maximality of $R$. This proves~\eqref{endpoint}.

By~\eqref{2conn-2}, $v$ has at least $x_1 + x_2 - 1$ neighbors in $G$, so it has at least $(x_1 + x_2 - 1) - a$ neighbors in $G - A$. Since $v$ is an isolated vertex in $G_1- A$, these are all neighbors of $v$ in $G_2$. By the maximality of $V(R)$ (operation (a)), they all are in $V(R)$, and by~\eqref{endpoint}, they are all in different edges of $R$.

Therefore $|R| \ge (x_1 + x_2 - 1) - a$. If $|R| \ge x_2$, then $\alpha'(G_2) \ge x_2$, violating~\eqref{no-matching}. If not, then $(x_1 + x_2 - 1) - a \le x_2 - 1$, so $a \ge x_1$. By Theorem~\ref{GE}, there is a matching in $G_1$ saturating $A$; therefore $\alpha'(G_1) \ge x_1$, again violating~\eqref{no-matching}.
\end{proof}

We consider two cases; in each, we construct the pair $(I,R)$ of Claim~\ref{r-matching} and arrive at a contradiction.

\medskip\noindent\textbf{Case 1:}
$G_2 - A$ has no matching that covers all vertices which are not isolated in $G_1 - A$.

In this case, let $D_1, D_2, \dots, D_r$ be the components of $G_1[D]$ with at least $3$ vertices. For each of these components, we pick a leaf vertex $u_i$ of a spanning tree of $G_1[D_i]$. Since $G_1[D_i]-u_i$ is still connected, there is an edge $e_i \in G_1[D_i] - u_i$. At least one endpoint of $e_i$ is a vertex $v_i$ not in the same part of $G$ as $u_{i+1}$, and is therefore adjacent to $u_{i+1}$ in $G_2$.

To begin, let $R_0$ be the set of the $r-1$ edges $u_{i+1}v_{i}$ found in this way, when $r>0$, and the empty set otherwise. If $I_0$ is the set of all isolated vertices in $G_1[D]$, then $|I_0| = k-r$, and therefore $|I_0| + |R_0| \ge k-1$. 

Now build $I$ and $R$ by the following procedure. Start with $I=I_0$ and $R=R_0$. Whenever an edge (in $G_2$) connects $I$ to $V(G) - (A \cup I \cup V(R))$, add it to $R$ and remove its endpoint from $I$. Whenever we can replace an edge $e \in R$ with two other edges $e', e''$ such that $e \subset e' \cup e''$ and $e' \cup e''$ has exactly one vertex in $I$, do so, and remove from $I$ the vertex contained in $e' \cup e''$. Once this process is complete, $R$ satisfies the maximality conditions of Claim~\ref{r-matching}.

In this process, $|I|+|R|$ never changes. Therefore $|I| + |R| \ge k-1$ at the end of this procedure. 

By~\eqref{no-matching}, $|R| \le \alpha'(G_2) \le x_2 - 1$; therefore $|I| \ge k-1-|R| \ge k - x_2$.

Theorem~\ref{GE} guarantees that $\alpha'(G_1) = \frac{N - (k-a)}{2} \ge \frac{N-k}{2}$. By~\eqref{no-matching}, $\alpha'(G_1) \le x_1 - 1$, so we have
\[
	x_1 - 1 \ge \frac{N - k}{2} \ge \frac{(2x_1 + x_2 - 1) - k}{2} \implies 2x_1 - 2 \ge 2x_1 + x_2 - k - 1 \implies k - x_2 \ge 1.
\]
Therefore $|I| \ge k-x_2 \ge 1$, so $I$ is nonempty.

Moreover, $A \cup I \cup V(R) \ne V(G)$, since by  the case, $R$ does not cover all the non-isolated vertices of $G_1 - A$. Therefore Claim~\ref{r-matching} applies to the pair $(I,R)$, contradicting assumption~\eqref{no-matching}.

\medskip\noindent\textbf{Case 2:}
$G_2 - A$ has a matching that covers all vertices which are not isolated in $G_1 - A$.
Let $R$ be a maximal matching in $G_2 - A$ with this property.
Let $I_0 = V(G) - V(R) - A$. 

By assumption~\eqref{no-matching}, $|V(R)| \le 2\alpha'(G_2) \le 2(x_2 - 1)$, so $|I_0| \ge N - 2(x_2-1) - a$. By~\eqref{2conn-1},
\[
	|I_0| \ge (2x_1 + x_2 - 1) - 2(x_2-1) - a = (x_1-a) + (x_1 - x_2) + 1 \ge x_1 - a +1.
\]
By Theorem~\ref{GE}, there is a matching in $G_1$ saturating $A$. Therefore $a \le \alpha'(G_1) \le x_1 - 1$, and $x_1 - a \ge 1$. Hence $|I_0| \ge 2$.

Choose any $u \in I_0$ and let $I = I_0 - \{u\}$ so indeed $A \cup I \cup V(R) \neq V(G)$. Then Claim~\ref{r-matching} applies to the pair $(I,R)$, with the maximality conditions holding because $R$ is a maximum matching; once again, this contradicts~\eqref{no-matching}.\qed

\section{Connected matchings in 3-edge-colorings (Theorem~\ref{3conn})}

\subsection{ Components of $G_i$}

To prove Theorem~\ref{3conn}, we begin by proving bounds on the sizes of  components in $G_2$ and $G_3$. This is done by applying Theorem~\ref{2conn} to an appropriate subgraph of $G$.

\begin{claim}\label{no-large}
If there is an $i \in \{2,3\}$ such that $G_i$ has no  component of size larger than $x_1 + x_i - 1$, then the conclusion of Theorem~\ref{3conn} holds.
\end{claim}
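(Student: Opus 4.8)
The plan is to reduce to Theorem~\ref{2conn} by discarding one color class. Suppose that for some $i \in \{2,3\}$ — say $i=2$, the case $i=3$ being symmetric — every component of $G_2$ has at most $x_1+x_2-1$ vertices. I want to produce a monochromatic connected matching of the required size in $G_1$ or $G_3$. The idea is to build a complete multipartite graph on which Theorem~\ref{2conn} (with colors $1$ and $3$, and parameters $x_1$ and $x_3$) can be applied, by choosing the parts so that every edge of $G_2$ is absorbed \emph{inside} a single part and hence deleted.

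First I would take the components of $G_2$, call their vertex sets $C_1, C_2, \dots, C_m$, so that $V(G) = C_1 \cup \dots \cup C_m$ is a partition with $|C_t| \le x_1+x_2-1$ for each $t$, and every edge of $G_1 \cup G_3$ that is \emph{not} inside some $C_t$ is present (since $G = K_N$, the only edges missing from $G_1 \cup G_3$ are the $G_2$-edges, all of which lie inside the $C_t$'s). Thus the spanning subgraph $G' := G[E_1 \cup E_3]$ contains the complete multipartite graph $K_{|C_1|,\dots,|C_m|}$, and its colors $1,3$ restrict to a $2$-edge-coloring of it. Now I check the two hypotheses of Theorem~\ref{2conn} with $(x_1,x_2)$ there replaced by $(x_1,x_3)$: condition~\eqref{2conn-1} asks $N \ge 2x_1 + x_3 - 1$, and since $N = 2x_1 + x_2 + x_3 - 2$ and $x_2 \ge 1$ this holds. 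Condition~\eqref{2conn-2} asks $N - |C_t| \ge x_1 + x_3 - 1$ for every $t$; since $|C_t| \le x_1 + x_2 - 1$ we get $N - |C_t| \ge (2x_1 + x_2 + x_3 - 2) - (x_1 + x_2 - 1) = x_1 + x_3 - 1$, as needed. (If some $C_t$ is a single vertex or the multipartite structure is degenerate, the inequalities only get easier; one should also note $s=m\ge 2$ holds, else $G_2$ would be connected on all of $K_N$ forcing a huge connected matching in $G_2$ directly, contradiction — this edge case needs a sentence.)

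By Theorem~\ref{2conn} applied to $K_{|C_1|,\dots,|C_m|}$ with the induced $2$-edge-coloring, either $\alpha'_*(G_1') \ge x_1$ or $\alpha'_*(G_3') \ge x_3$, where $G_j'$ is the color-$j$ subgraph of $G'$. Since $G_j'$ is a subgraph of $G_j$, a connected matching in $G_j'$ is a connected matching in $G_j$, so $\alpha'_*(G_1) \ge x_1$ or $\alpha'_*(G_3) \ge x_3$, which is the conclusion of Theorem~\ref{3conn}.

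The only genuinely delicate points are bookkeeping ones: verifying that $G'$ really contains the claimed complete multipartite graph (which hinges on the simple observation that $G$ is complete and every deleted edge is a $G_2$-edge living inside one part), and handling the degenerate case $m=1$ — if $G_2$ is connected and spans all $N$ vertices then, since $N \ge 2x_1 \ge 2x_2$, $G_2$ has a matching of size at least $x_2$ which is automatically connected, giving $\alpha'_*(G_2) \ge x_2$ and hence the conclusion; but this contradicts the size bound $|C_1| = N \le x_1+x_2-1 < 2x_1+x_2+x_3-2 = N$ anyway, so $m \ge 2$ is forced by the hypothesis. I expect no serious obstacle here; the claim is essentially a clean application of Theorem~\ref{2conn} once the right partition is identified.
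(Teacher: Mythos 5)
Your proposal is correct and follows essentially the same route as the paper: contract the hypothesis into a complete multipartite structure by taking the components of $G_i$ as parts, delete all edges inside each part, verify conditions~\eqref{2conn-1} and~\eqref{2conn-2} with the remaining two color classes and parameters, and invoke Theorem~\ref{2conn}. The paper simply takes $i=3$ where you take $i=2$ (symmetric), and your extra remarks on the degenerate case $m=1$ and on $s\ge 2$ are correct but not needed beyond a sentence.
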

\begin{proof}
Without loss of generality, say $i=3$. 
For each  component of $G_3$, delete all edges in $G$ between vertices of that component to create a graph $G'$. This graph has a $2$-edge-coloring given by $G_1$ and $G_2$. It satisfies Condition~(\ref{2conn-1}) of Theorem~\ref{2conn} automatically, since $N \ge 2x_1 + x_2 - 1$. Also, no part is larger than $x_1 + x_3 -1$, so
\[
	N-n_i \ge (2x_1 + x_2 + x_3 - 2) - (x_1 + x_3 - 1) = x_1 + x_2 - 1
\]
and $G'$ satisfies Condition~(\ref{2conn-2}). By Theorem~\ref{2conn}, we have $\alpha'_*(G_i) \ge x_i$ 
for some $i \in \{1,2\}$.\end{proof}

\medskip
From now on, we assume that for each $i \in \{2,3\}$, there is a  component in color $i$ on vertex set $S_i \subseteq V(G)$, with $|S_i| \ge x_1 + x_i$.

However, neither $S_2$ nor $S_3$ can be too large.

\begin{claim}\label{too-large}
If there is an $i \in \{2,3\}$ such that $|S_i| \ge x_1 + x_2 + x_3 - 2$, then the conclusion of Theorem~\ref{3conn} holds.
\end{claim}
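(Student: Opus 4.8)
The plan is to mirror the proof of Claim~\ref{no-large}: extract a $2$-edge-colored subgraph, apply Theorem~\ref{2conn}, and read off a large connected matching in color $1$ or $2$. Here, however, rather than deleting the edges inside a small color-$3$ component, I would exploit the fact that $S_i$ is nearly \emph{all} of $V(G)$. Without loss of generality say $i=3$, so $|S_3|\ge x_1+x_2+x_3-2=N$, hence $S_3=V(G)$ and $G_3$ is connected. (If $i=2$ the argument is symmetric, interchanging the roles of $x_2$ and $x_3$; if $|S_i|>N$ the hypothesis is vacuous, so $|S_i|=N$ is the only case.) Thus $G_3$ being connected is the whole content of the hypothesis in this case.

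Next I would pass to the $2$-edge-colored graph $G' = G[E_1\cup E_2]$, i.e.\ delete all color-$3$ edges. Since $G'$ is a spanning subgraph of $K_N$ on the same vertex set, I want to view it as a complete multipartite graph to apply Theorem~\ref{2conn}. The natural choice is to make each vertex its own part, so $G' \subseteq K_{1,1,\dots,1}=K_N$ with $s=N$ and every $n_i=1$; then condition \eqref{2conn-1} reads $N\ge 2x_1+x_2-1$, which holds because $N = 2x_1+x_2+x_3-2 \ge 2x_1+x_2-1$ as $x_3\ge 1$, and condition \eqref{2conn-2} reads $N-1\ge x_1+x_2-1$, i.e.\ $N\ge x_1+x_2$, which holds since $N=2x_1+x_2+x_3-2\ge x_1+x_2$ as $x_1,x_3\ge 1$. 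But this is too crude: Theorem~\ref{2conn} then only gives $\alpha'_*(G_i)\ge x_i$ for $i\in\{1,2\}$ in the graph $G'=G$ with \emph{all} edges of color $1$ or $2$ present — which is exactly what we want, so in fact this crude bound suffices. The key point is that deleting the color-$3$ edges does not change $G_1$ or $G_2$ at all, so a connected matching of size $x_i$ in color $i$ in $G'$ is one in $G$. Hence Theorem~\ref{2conn} applied to $G'$ (with parts of size $1$) directly yields $\alpha'_*(G_i)\ge x_i$ for some $i\in\{1,2\}$, which is the conclusion of Theorem~\ref{3conn}.

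I expect the only subtlety — and the thing to double-check carefully — is the arithmetic verifying \eqref{2conn-1} and \eqref{2conn-2} for the chosen partition, together with confirming that the hypothesis $|S_i|\ge x_1+x_2+x_3-2$ really does force $S_i=V(G)$ (so that no edge-deletion is needed and $G_1,G_2$ are untouched). Since $x_2,x_3\ge 1$ and $x_1\ge\max\{x_2,x_3\}$ by hypothesis, both inequalities $N\ge 2x_1+x_2-1$ and $N\ge x_1+x_2$ follow immediately from $N=2x_1+x_2+x_3-2$. There is no real obstacle here; this claim is a short corollary of Theorem~\ref{2conn} and exists to dispose of the "one giant component" boundary case before the main argument handles the case where $S_2$ and $S_3$ are both of intermediate size $x_1+x_i \le |S_i| \le x_1+x_2+x_3-3$.
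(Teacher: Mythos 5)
There is a genuine gap --- in fact two. First, your opening arithmetic is wrong: the hypothesis is $|S_i|\ge x_1+x_2+x_3-2$, but $N=2x_1+x_2+x_3-2$, so $x_1+x_2+x_3-2=N-x_1<N$ (since $x_1\ge 1$). The hypothesis therefore only says that $S_i$ misses at most $x_1$ vertices; it does not force $S_i=V(G)$, and your whole reduction to ``$G_3$ is connected and spanning'' collapses. Second, even in the special case $S_3=V(G)$, your application of Theorem~\ref{2conn} is invalid. Taking all parts to be singletons makes the complete multipartite graph equal to $K_N$, and Theorem~\ref{2conn} requires $E_1\cup E_2$ to be a partition of \emph{all} edges of that graph. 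But $E_3\ne\emptyset$ (indeed $S_3$ is a component of $G_3$ of size at least $x_1+x_3\ge 2$, so $G_3$ has edges), hence the colour-$3$ edges are uncovered and $G[E_1\cup E_2]$ is a proper subgraph of $K_N$, not a $2$-edge-coloured complete multipartite graph. Nor can one in general choose a multipartite structure whose missing edges are exactly $E_3$, because $E_3$ need not be a disjoint union of cliques.

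This is precisely the difficulty the paper's proof is built to overcome: assuming $\alpha'(G_3[S_3])<x_3$ (else we are done), it takes the Gallai--Edmonds decomposition $(A,C,D)$ of $G_3[S_3]$, deletes the small set $A$ (with $|A|\le x_3-1$), and then deletes all edges of $G$ inside each component of $G_3[V(G)-A]$. Since there are no colour-$3$ edges between distinct components of $G_3[V(G)-A]$, the resulting graph $G'$ is a genuine complete multipartite graph coloured only with colours $1$ and $2$; the Gallai--Edmonds bounds $|A|\le x_3-1$ and $|A|+|C_i|\le 2x_3-1$ are then exactly what is needed to verify conditions~\eqref{2conn-1} and~\eqref{2conn-2} for $G'$. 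Your proposal skips all of this, so it does not establish the claim.
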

\begin{proof}
Without loss of generality, say $i=3$. Let $B = V(G) - S_3$.
If $G_3[S_3]$ contains a matching of size $x_3$, then we are done. If not, take the   GE-decomposition $(A, C, D)$ of $G_3[S_3]$.

We build a multipartite graph $G'$, with the inherited $2$-edge-coloring 
 by 
\begin{enumerate}
\item deleting the vertices of $A$ from $G$, and
\item for each component of $G_3[V(G) - A]$, deleting all edges of $G$ inside that component.
\end{enumerate}
We  have $|A| \le x_3 - 1$ because, by Theorem~\ref{GE}, every maximum matching in $G_3[S_3]$ matches each vertex of $A$ to a vertex outside $A$. So $G'$ contains at least $2x_1+x_2+x_3-2 - (x_3-1) = 2x_1+x_2-1$ vertices, satisfying Condition~(\ref{2conn-1}) of Theorem~\ref{2conn}.

If $C_1, \ldots , C_k$ are the  components of $G_3[S_3-A]$, then for each $C_i$ we have $|A|+|C_i| \le 2x_3-1$ because, by Theorem~\ref{GE}, $G_3[C_i]$ is factor-critical and $G_3[S_3]$ has a maximum matching that saturates the vertices in $A\cup C_i$. Therefore $G'-C_i$ contains at least 
\[
	2x_1 + x_2 + x_3 - 2 - (2x_3 - 1) = 2x_1 + x_2 - x_3 - 1 \ge x_1 + x_2 - 1
\]
vertices. 

This verifies Condition~(\ref{2conn-2}) of Theorem~\ref{2conn} for the parts of $G'$ that are contained in $S_3$. It remains to check this condition for parts of $G'$ that are contained in $B$. Since all the vertices of $S_3 - A$ are vertices of $G'$ outside such a part,  the number of such vertices is at least
\[
	|S_3| - |A| \ge (x_1+x_2+x_3-2) - (x_3 - 1) = x_1 + x_2-1.
\]
So Theorem~\ref{2conn} applies to $G'$. Therefore, for some $i \in \{1,2\}$, $\alpha'_*(G_i) \ge \alpha'_*(G'_i) \ge x_i$, and the conclusion of Theorem~\ref{3conn} holds.
\end{proof}

\subsection{Completing the proof of Theorem~\ref{3conn}}

From now on, we assume that the hypothesis of Claim~\ref{too-large} does not hold. Let $\overline{S_i} = V(G) - S_i$. Our assumption implies that $|\overline{S_i}| \ge x_1 + 1$ for both $i \in \{2,3\}$. We can use this to obtain a decomposition of $V(G)$ in which we know the colors of many  edges.

\begin{claim}\label{z-parts}
Theorem~\ref{3conn} holds unless there is a partition $V(G) = Z_0 \cup Z_1 \cup Z_2 \cup Z_3$ such that:
\begin{itemize}
\item All edges of $G[Z_0, Z_1]$ and $G[Z_2, Z_3]$ are in $E_1$.
\item All edges of $G[Z_0, Z_2]$ and $G[Z_1, Z_3]$ are in $E_2$.
\item All edges of $G[Z_0, Z_3]$ and $G[Z_1, Z_2]$ are in $E_3$.
\end{itemize}
Additionally, none of the parts $Z_i$ are empty.
\end{claim}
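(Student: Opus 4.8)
\textbf{Proof proposal for Claim~\ref{z-parts}.}

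The plan is to build the partition from the two large monochromatic components $S_2$ (color $2$) and $S_3$ (color $3$), using the structure forced by completeness of $G$ together with the bounds already established: $|S_i|\ge x_1+x_i$ for $i\in\{2,3\}$ and $|\overline{S_i}|\ge x_1+1$. First I would record the basic exclusion properties of a monochromatic component: since $S_2$ is (the vertex set of) a single component of $G_2$, there are \emph{no} edges of $E_2$ between $S_2$ and $\overline{S_2}$; likewise no edges of $E_3$ between $S_3$ and $\overline{S_3}$. Consequently every $G$-edge from $S_2$ to $\overline{S_2}$ lies in $E_1\cup E_3$, and every $G$-edge from $S_3$ to $\overline{S_3}$ lies in $E_1\cup E_2$. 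The natural guess is to take the four cells to be the Venn-diagram pieces
\[
	Z_0 = \overline{S_2}\cap\overline{S_3},\qquad Z_1 = S_2\cap S_3,\qquad Z_2 = S_2\cap\overline{S_3},\qquad Z_3 = \overline{S_2}\cap S_3,
\]
and then verify the six color statements one pair at a time.

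The verification splits into easy cases and one case needing an extra argument. The edges between $Z_0$ and $Z_3$: both endpoints lie in $\overline{S_2}$, so the edge is not in $E_2$; and it crosses from $\overline{S_3}$ to $S_3$, so it is not in $E_1$? — no, that exclusion only rules out $E_3$ being absent; let me instead note it crosses $S_3/\overline{S_3}$, hence it is not in $E_2$ either way and... the clean way: an edge with one endpoint in $S_3$ and one in $\overline{S_3}$ is not in $E_3$ is \emph{false} — it is not in $E_3$ only if... Actually the correct exclusion is: no $E_3$-edge leaves $S_3$, so such an edge is \emph{not} in $E_3$; and no $E_2$-edge leaves $S_2$, and $Z_0,Z_3\subseteq\overline{S_2}$ gives nothing directly, but $Z_3\subseteq S_3$ means an edge $Z_0Z_3$ crosses $S_3/\overline{S_3}$... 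I would argue: $Z_0Z_3$ edges are not in $E_3$ (they leave $S_3$), and not in $E_2$ (they leave $S_2$, since $Z_3\subseteq \overline{S_2}$? no, $Z_0,Z_3\subseteq\overline{S_2}$, so they do \emph{not} leave $S_2$). So for $Z_0Z_3$ I only get ``not in $E_3$,'' which is not enough. This is exactly the gap: the Venn pieces alone do not pin down all colors, so one must use Theorem~\ref{2conn} to rule out the bad configurations, and the claim is stated as ``Theorem~\ref{3conn} holds \emph{unless}''. So the real plan is: assume the partition as above does \emph{not} have all six color properties, locate a specific edge violating one of them, and show it creates either a large bicolored structure to which Theorem~\ref{2conn} applies or directly a monochromatic $M_{x_i}$.

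Concretely, I would proceed as follows. Edges inside the ``diagonal'' pairs $Z_0Z_1$, $Z_2Z_3$ (want color $1$), $Z_0Z_2$, $Z_1Z_3$ (want color $2$), $Z_0Z_3$, $Z_1Z_2$ (want color $3$): for each such pair, one of the two colors is immediately excluded by a component argument (e.g.\ $Z_0Z_2$ leaves $S_3$ hence is not in $E_3$, and $Z_0,Z_2$ both lie in $\overline{S_3}$... hmm, for $Z_1Z_3$: leaves nothing of $S_2$? $Z_1\subseteq S_2$, $Z_3\subseteq\overline{S_2}$, so it leaves $S_2$, hence not in $E_2$ — wait we \emph{want} it in $E_2$). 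I would recompute these exclusions carefully and then handle each remaining binary choice by a uniform device: if, say, some $Z_1Z_3$ edge is in $E_1$ rather than $E_2$, use it as a bridge to merge pieces of $S_2$ and $S_3$ or to contradict the maximality/size bounds on $S_2,S_3$, or delete a bounded set and apply Theorem~\ref{2conn} to the remaining $\ge 2x_1+x_2-1$ vertices exactly as in the proofs of Claims~\ref{no-large} and~\ref{too-large}. The main obstacle — and where I expect the bulk of the work — is precisely this final case analysis: showing that \emph{every} potential miscoloring of an edge between two of the $Z$-cells either forces a monochromatic connected matching of the required size directly or reduces (after deleting $O(x_3)$ vertices of some Gallai--Edmonds $A$-set) to an instance of Theorem~\ref{2conn}. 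The size bookkeeping, using $N=2x_1+x_2+x_3-2$, $x_2,x_3\le x_1$, and the bounds $x_1+x_i\le|S_i|\le x_1+x_2+x_3-3$, should in each subcase leave enough vertices to satisfy~\eqref{2conn-1} and~\eqref{2conn-2}, but getting the exclusions and the bridging arguments exactly right for all six pairs is the delicate part.
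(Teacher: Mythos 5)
Your proposal has a genuine gap, and it stems from two related problems. First, your labeling is incompatible with the statement: with $Z_0=\overline{S_2}\cap\overline{S_3}$ and $Z_3=\overline{S_2}\cap S_3$, every $Z_0Z_3$ edge stays inside $\overline{S_2}$ and crosses the boundary of $S_3$, so the component argument \emph{excludes} $E_3$ for these edges — but the claim requires them to be \emph{in} $E_3$. The labeling that works is $Z_0=S_2\cap S_3$, $Z_1=\overline{S_2}\cap\overline{S_3}$, $Z_2=S_2\cap\overline{S_3}$, $Z_3=\overline{S_2}\cap S_3$: then $Z_0Z_1$ and $Z_2Z_3$ edges cross both component boundaries and are forced into $E_1$, while each of the other four pairs has exactly one color excluded (e.g.\ $Z_0Z_3$ stays in $S_3$ but leaves $S_2$, so only $E_2$ is ruled out), leaving a binary choice between $E_1$ and the desired color. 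Your mid-proof confusion about "which exclusion applies to $Z_0Z_3$" is a symptom of the wrong labeling, not of any intrinsic difficulty.

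Second, and more importantly, you never identify the mechanism that resolves those binary choices, and the "uniform device" you sketch (bridging arguments, deleting Gallai--Edmonds sets, reapplying Theorem~\ref{2conn}) is not what is needed and would not obviously work. The actual argument is short: the union of the two complete bipartite graphs $G[Z_0,Z_1]$ and $G[Z_2,Z_3]$ lies entirely in $G_1$, and any vertex cover of it must contain $Z_0$ or $Z_1$, and $Z_2$ or $Z_3$, hence one of $S_2$, $S_3$, $\overline{S_2}$, $\overline{S_3}$ — each of size at least $x_1+1$ by Claims~\ref{no-large} and~\ref{too-large}. By Theorem~\ref{konig-egervary} this union contains a matching of size $x_1+1$. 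If even one edge of $E_1$ appears in any of $G[Z_0,Z_2]$, $G[Z_0,Z_3]$, $G[Z_1,Z_2]$, $G[Z_1,Z_3]$, that matching becomes connected and $\alpha'_*(G_1)\ge x_1+1$, so Theorem~\ref{3conn} holds. Otherwise $E_1$ is excluded from all four remaining pairs simultaneously, which together with the single component exclusion each pair already carries pins down every color as claimed. Without this vertex-cover step, your case analysis over "every potential miscoloring" has no engine to run on, so the proof as proposed does not go through.
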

\begin{proof}
Define the parts as follows: $Z_0 = S_2 \cap S_3$, $Z_1 = \overline{S_2} \cap \overline{S_3}$, $Z_2 = S_2 \cap \overline{S_3}$, and $Z_3 = \overline{S_2} \cap S_3$.

Because $S_2$ and $S_3$ induce  components in $G_2$ and $G_3$ respectively, the edges out of $S_2$ cannot be in $E_2$, and the edges out of $S_3$ cannot be in $E_3$. In particular, this implies that all edges in $G[Z_0, Z_1]$ and $G[Z_2,Z_3]$ are in $E_1$.
The union of the complete bipartite graphs $G[Z_0, Z_1]$ and $G[Z_2,Z_3]$ is a  subgraph of $G_1$. A vertex cover of this bipartite graph has to include either the entire $Z_0$ or the entire $Z_1$, and it has to include either the entire $Z_2$ or the entire $Z_3$. This means a vertex cover contains one of $Z_0 \cup Z_2 = S_2$, or $Z_0 \cup Z_3 = S_3$, or $Z_1 \cup Z_2 =\overline{S_3}$, or $Z_1 \cup Z_3 = \overline{S_2}$. Each of them has size at least $x_1 + 1$ by Claims~\ref{no-large} and~\ref{too-large}.

So this bipartite graph has minimum vertex cover of order at least $x_1 + 1$. Then by Theorem~\ref{konig-egervary}, its maximum matching has size at least $x_1 + 1$. This maximum matching is connected if there is at least one edge from $E_1$ in any of $G[Z_0, Z_2]$, $G[Z_0, Z_3]$, $G[Z_1, Z_2]$, or $G[Z_1, Z_3]$. If this happens, then $\alpha'_*(G_1) \ge x_1+1$ and we obtain the conclusion of Theorem~\ref{3conn}. 

If not, then $G[Z_1,Z_2]$ and $G[Z_0,Z_3]$ cannot contain edges from $E_1$. We already know they cannot contain edges from $E_2$, so they must all be in $E_3$. Similarly, $G[Z_1,Z_3]$ and $G[Z_0,Z_2]$ cannot contain edges from $E_1$ or $E_3$, so they must all be in $E_2$, and the partition has the structure we wanted.

Finally, we check that none of $Z_0, Z_1, Z_2, Z_3$ are empty.

We have $|S_2| + |S_3| = (x_1 + x_2) + (x_1 + x_3) = N+2$, so $|Z_0| = |S_2 \cap S_3| \ge 2$. 

If $Z_1$ were empty, then we would have $|Z_2| = |\overline{S_3}| \ge x_1 + 1$ and $|Z_3| = |\overline{S_2}| \ge x_1 + 1$. In this case, $G[Z_2, Z_3]$ would contain $K_{x_1+1,x_1+1}$, and $\alpha'_*(G_1) \ge x_1$.

The two cases $|Z_2|=0$ and $|Z_3|=0$ are symmetric. If $Z_2$ were empty, then we would have $|Z_0| = |S_2| \ge x_1 + x_2 \ge x_1$ and $|Z_1| = |\overline{S_3}| \ge x_1 + 1$; we would get the same inequalities if $Z_3$ were empty. In either case, $G[Z_0, Z_1]$ would contain $K_{x_1, x_1+1}$, and $\alpha'_*(G_1) \ge x_1$.
\end{proof}
    
Now we complete the proof of Theorem~\ref{3conn}.

\begin{proof}[Proof of Theorem~\ref{3conn}]
Induct on $\min\{x_1,x_2,x_3\}$. The base case is when
\(
	\min\{x_1,x_2,x_3\} = 0,
\)
which holds because we can always find a connected matching of size $0$.

If the theorem holds for all smaller $\min\{x_1,x_2,x_3\}$, then it holds for the triple $(x_1-1,x_2-1,x_3-1)$, so assume this case as the inductive hypothesis.

For the triple $(x_1,x_2,x_3)$, let $G = K_{2x_1 + x_2 + x_3 - 2}$ with a $3$-edge-coloring. If the hypotheses of any of the Claims~\ref{no-large}--\ref{z-parts} hold for $G$, then we are done. Otherwise, $G$ has the decomposition $(Z_0,Z_1,Z_2,Z_3)$ described in Claim~\ref{z-parts}.

Construct a 3-edge-colored subgraph $G'$ of $G$ by deleting a vertex $v_0, v_1, v_2, v_3$ from each of the nonempty sets $Z_0, Z_1, Z_2, Z_3$. $G'$ still has
\[
	N - 4 = 2(x_1-1) + (x_2-1) + (x_3-1) -2
\]
vertices, so the inductive hypothesis applies. We find a connected matching in $G_i'$ of size $x_i - 1$ for some $i$. The vertices of this matching have to be contained in two of the parts $Z_j, Z_k$, with the edges between $Z_j$ and $Z_k$ all having color $i$. So we can add the edge $v_jv_k$ to this matching, getting a connected matching of size $x_i$ in the original $G_i$.
\end{proof}

\section{Stability for 2-edge-colorings (Theorem~\ref{stability})}

\subsection{Proof setup}

 Among counterexamples for  fixed $n,\gamma$ and $\epsilon$ such that $0<\epsilon< 10^{-3}\gamma<10^{-6}$
 and $n>100/\gamma$,
 choose a $2$-edge-colored  $(n,s,\epsilon)$-{suitable}  graph $G$
 with the fewest vertices and modulo this, with the smallest $s$. 
 
 If both \eqref{suit-s1} and \eqref{suit-s2} are strict inequalities, we can delete a vertex from $V_s$ and still have 
 a $2$-edge-colored  $(n,s,\epsilon)$-{suitable}  graph contradicting the minimality of $N$. 
 
 If  $N=3n-1$ and~\eqref{suit-s2} is strict, then $n_1\leq n-1$ and hence $s\geq 3$. Moreover,
 $n_{s-1}+n_s>n$, since otherwise we can consider the $(s-1)$-partite graph obtained from $G$ by deleting all edges between
 $V_{s-1}$ and $V_s$; we have~\eqref{suit-s2} still holds with possibly rearranging the parts according to their size. This also yields that for $s\geq6$, also $n_1+n_2\geq n_3+n_4\geq n_{s-1}+n_s>n$ implying $N>3n$. This contradicts the condition $N=3n-1$. Thus, if $N-n_1>2n-1$, then $N=3n-1$, $s\leq 5$ and $n_1 < n$. 
 
 On the other hand, if $N>3n-1$ and $N-n_1=2n-1$, then $n_1=n_2$, since
 otherwise by deleting a vertex from $V_1$ we get a smaller $(n,s,\epsilon)$-{suitable}  graph. Furthermore, in this case
 $n_1=n_2>(3n-1)-(2n-1)=n$ and hence $n_3+\ldots+n_s<(2n-1)-n=n-1$. So, if $s\geq 4$, then we can replace the parts $V_3,\ldots,V_s$ with
 one part $V'_3=V_3\cup\ldots\cup V_s$ and~\eqref{suit-s2} still holds for the new parts $V_1, V_2, V_3'$. If $s=2$, then $n_1=n_2=2n-1$.\\
  Summarizing, we will replace \eqref{suit-s1} and \eqref{suit-s2} with the following more restrictive conditions:
\begin{equation}
	N \ge 3n-1;\, \text{\em and, if  $N > 3n-1$, then $N - n_1 = 2n-1 \ge n_2 = n_1 > n$ and $s \le 3$.} \label{suit-s1'}\tag{S$1'$}
\end{equation}
\begin{equation}
\text{\em	$N-n_1 \ge 2n-1$;  and if $N-n_1 > 2n-1$, then $N=3n-1, n_1 < n, s \le 5, n_{s-1} + n_s > n$.} \label{suit-s2'}\tag{S$2'$}
\end{equation}
Conditions \eqref{suit-s1'} and \eqref{suit-s2'} imply
\begin{equation}
	N = \max\{n_1, n\} + 2n-1 \le 4n-2, \text{ and } 2n-1 \ge n_1  \ge \ldots \ge n_{s-1} > n/2.
	\label{suit-s5}\tag{S5}
\end{equation}

We obtain $G'$ by deleting from $G$ the set $\widetilde{V}$ and in the case $|V_s-\widetilde{V}|<4\epsilon n$ also deleting $V_s-\widetilde{V}$. 
Let  $s'=s-1$ if we have deleted $V_s-\widetilde{V}$ and $s'=s$ otherwise.
Let $V':=V(G')$ and $N'=|V'|$. 
By \eqref{suit-s3}
and the construction of $V'$,
$N'>N-5\epsilon n$. For $j\in [s']$, let $V'_j=V_j- \widetilde{V}_j$ and $n'_j=|V'_j|$. We also reorder
$V'_j$ and $n'_j$ so that
\begin{equation}\label{2e'}
n'_1\geq n'_2\geq\ldots\geq n'_{s'}.
\end{equation}
For $i\in [2]$, we let $G'_i:=G_i-\widetilde{V}-V_s$ if $|V_s-\widetilde{V}|<4\epsilon n$, and
$G'_i:=G_i-\widetilde{V}$ otherwise.

By construction,~\eqref{2e'} and~\eqref{suit-s5},  $n'_{s'}\geq 4\epsilon n$. In particular,
\begin{equation}\label{2e''}
\mbox{\em for  $j\in [s']$, every $v\in V'_j$ is adjacent to more than half of $V'_{j'}$ for each $j'\in [s']-\{j\}$.}
\end{equation}

The structure of the  proof resembles that of the proof of Theorem~\ref{2conn}, but everything becomes more complicated.
For example, instead of a simple Claim~\ref{s=2}, we need a two pages Subsection~\ref{stab-bipartite-section}  below considering the case of almost bipartite graphs.

For other cases, we will construct a Gallai--Edmonds decomposition of a large subgraph of one $G'_i$ in Subsection~\ref{stab-ge-section}. The rest of this section will prove three lemmas that construct  a $(68\gamma,i,j)$-bad partition of $V(G)$ in different ways, depending on the structure of the Gallai--Edmonds decomposition.

We will repeatedly use the inequality $\gamma> 1000\epsilon$.

\subsection{Nearly bipartite graphs}
\label{stab-bipartite-section}

Suppose that $G$ is an $(n,s,\epsilon)$-{suitable}  graph satisfying \eqref{suit-s1'}, \eqref{suit-s2'} and \eqref{suit-s5}, and that $s'=2$, i.e., $G'$ is bipartite. This means $|V_3|\leq 4\epsilon n$.
 By~\eqref{suit-s2} and the definition of $G'$, 
 \begin{equation}\label{d11}
 |V_1'|\geq |V'_2|\geq 2n-1-5\epsilon n.
 \end{equation}
  Suppose neither of $G'_1$ and $G'_2$ has a connected matching of size
 at least $(1+\gamma)n$.
  Let $F$ be a largest component over all components in $G'_1$ and $G'_2$.
 By symmetry, we may assume that $F$ is a component of $G'_1$. Let $R$ be the smaller of the sets
 $V'_1-V(F)$ and $V'_2-V(F)$, and let $r=|R|$. For $j=1,2$, let 
 $F_j=V(F)\cap V'_j$.
 
We prove two claims that yield Theorem~\ref{stability} for $s'=2$ in two cases, depending on the size of $R$.

\begin{claim}\label{bip-small-r}
	If $r \leq 2\epsilon n$, then $V$ has a $(8\gamma,2,2)$-bad partition.
\end{claim}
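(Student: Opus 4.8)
The plan is to exploit the fact that when $r \le 2\epsilon n$, the component $F$ of $G_1'$ almost exhausts both parts $V_1'$ and $V_2'$, so $F$ is close to a balanced complete bipartite graph, while the "leftover" $R$ together with $V_3$ must absorb the role of one of the three parts $V_j$ in the bad partition. First I would set up the sizes: by \eqref{d11} and $r \le 2\epsilon n$, both $|F_1|$ and $|F_2|$ are at least $2n-1-7\epsilon n$, and since $|V_1'|+|V_2'| = N' \le 4n-2$ (from \eqref{suit-s5} and $N' > N - 5\epsilon n$), each of $|F_1|,|F_2|$ is also at most about $2n$. Thus $G_1'$ restricted to $F$ is a connected bipartite graph on roughly $(2n,2n)$ vertices; since its largest connected matching has size $< (1+\gamma)n$, König--Egerváry forces a vertex cover of $F$ in $G_1'$ of size $< (1+\gamma)n$, so there is a "deficient" side — say $F_1$ loses a large independent-in-$G_1'$ set $Y \subseteq F_1$ with $|Y| \ge |F_1| - (1+\gamma)n \ge (1-\gamma)n - O(\epsilon n)$ — and all edges of $G$ from $Y$ to $F_2 \setminus (\text{cover})$ lie in $E_2$. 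That produces a large chunk of $V_2'$, call it $Z$, such that $G_2[Y,Z]$ is complete bipartite and $|Y|,|Z| \ge (1-\gamma)n - O(\epsilon n)$.

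Next I would identify the three parts of the $(\lambda,2,2)$-bad partition $V(G) = V_j \cup U_1 \cup U_2$. The natural choice is $V_j := V_1$ (the full original first part, which by \eqref{suit-s5} has $n_1 \ge 2n-1 - \dots$, actually we need $n_j \ge (1-\lambda)n$, easily met), $U_1 :=$ something of size $\approx n$ carved from the $G_2$-complete side $Z$, and $U_2 :=$ the remaining vertices, designed so that $G_1[V_j, U_1]$ and $G_2[V_j,U_2]$ are nearly empty. Wait — I should be careful about the color indices: the definition asks for $|E(G_i[V_j,U_1])| \le \lambda n^2$ and $|E(G_{3-i}[V_j,U_2])| \le \lambda n^2$, so with $i = 2$ I need $G_2[V_j,U_1]$ almost empty and $G_1[V_j,U_2]$ almost empty. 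The point is that $Y$ is isolated in $G_1'$ away from... hmm. Let me instead take $V_j$ to be (essentially) $Y$ padded out to a full part: since $Y \subseteq V_1$ and $Y$ is independent in $G_1'$, and $Y$ has no $G_1$-edges to $Z$, the set $Y$ behaves like a part whose incident edges to $Z$ are all color $2$. The cleanest route: let $U_1 \approx Z$ (size trimmed to $\approx n$), $U_2 \approx (V(F) \cup R \cup \widetilde V \cup V_3) \setminus (Y \cup U_1)$ trimmed to $\approx n$, and $V_j \approx Y$ enlarged by the $\widetilde V_1$-vertices of $V_1$ to recover the full $V_1$. Then $G_2[V_j, U_1]$: vertices of $Y$ have all their $G_1'$-non-edges to $Z$ forced into... no, $Y$-to-$Z$ edges are color $2$, which is the wrong direction. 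So I should swap: put $U_1 \approx$ the rest, $U_2 \approx Z$. Then $G_1[V_j,U_2] = G_1[\approx Y, \approx Z]$ is empty by construction (modulo the $O(\epsilon n)$ garbage), giving condition (ii) with $i=2$, and $G_2[V_j,U_1] = G_2[\approx Y, \text{rest of } V_2']$ — I need this almost empty too, which should follow because $Y$'s neighbors in $G_2$ inside $V_2'$ are essentially all of $V_2'$... that's a problem.

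The main obstacle, then, is pinning down exactly which of the two color classes is "empty" on which side, and arranging the three parts so that \emph{both} emptiness conditions (i) and (ii) of the $(\lambda,2,2)$-bad partition hold simultaneously, rather than just one. The resolution I expect the proof to use: apply König--Egerváry to $G_1'[F]$ to get the cover, but then observe that because $F$ is a \emph{single} component of $G_1'$ of size close to $4n$ and $\alpha'_*(G_1') < (1+\gamma)n$, in fact \emph{most} of $F_1$ is $G_1'$-isolated-or-low-degree relative to $F_2$ — more precisely the cover being small forces one whole side, say all but $\le (1+\gamma)n$ vertices of $F_2$, to have \emph{no} $G_1'$-neighbours outside a set of size $\le (1+\gamma)n$; combined with \eqref{suit-s3}/\eqref{2e''} (every non-$\widetilde V$ vertex sees more than half of every other part, hence in particular sees, in $G$, almost all of $V_1$), this means those $F_2$-vertices connect to almost all of $V_1$ in color $2$, pushing a clean color-$2$-complete bipartite pair of size $\approx (2n,n)$ or so; then the $n_1 \ge 2n-1$ size of $V_1$ lets us split $V_1$'s worth of vertices into $U_1 \cup (\text{part of } U_2)$ so the counts work out to $(1\pm\lambda)n$ with $\lambda = 8\gamma$. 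I would carry out: (1) size bounds on $F_1,F_2,R,V_3$; (2) König--Egerváry on $G_1'[F]$ and extraction of the large $G_1'$-independent set $Y$; (3) transfer via \eqref{2e''} to show the complementary vertices are $G_2$-complete to almost all of their opposite original part; (4) define $V_j, U_1, U_2$ by trimming and padding to exact sizes $\approx n$, checking (i)--(v) with room to spare since all error terms are $O(\epsilon n) = O(10^{-3}\gamma n) \ll \gamma n^2$ when squared; (5) verify the two $\le 8\gamma n^2$ edge bounds, which dominate the constant and explain the "$8\gamma$".
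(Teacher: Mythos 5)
Your setup is on the right track --- using that $F$ is the unique nontrivial component of $G_1'-R$ to get $\alpha'(F)\le(1+\gamma)n$, applying K\"onig--Egerv\'ary to extract a vertex cover $Q$ of $F$ with $|Q|\le(1+\gamma)n$, and observing that $G'-Q-R$ is a large, nearly complete bipartite graph all of whose edges lie in $G_2'$. But the proposal has a genuine gap at exactly the point you flag yourself with ``that's a problem.'' You never arrive at a partition that satisfies both edge-sparseness conditions, and your proposed ``resolution'' is not the one that works: the definition of a $(\lambda,i,2)$-bad partition requires $V_j$ to be an actual part of the multipartite graph, so you cannot take $V_j\approx Y$ padded by $\widetilde V_1$ (that omits $Q\cap F_1$ and possibly $R$). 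The paper's choice is $V_j=V_1$ (the whole part), $U_1=Q\cap F_2$, and $U_2=V-V_1-U_1$. With that choice, condition (ii) --- sparseness of $G_1[V_1,U_2]$ --- is the easy one: every $G_1$-edge from $V_1$ to $U_2$ must meet $Q\cap V_1$ (which has size at most $(2\gamma+8\epsilon)n$ since $|Q\cap V_1|\le|Q\cap V_2|$) or the $O(\epsilon n)$ garbage $V_3\cup\widetilde V\cup R$.

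The step you are missing entirely is condition (i): why is $G_2[V_1,U_1]=G_2[V_1,Q\cap F_2]$ sparse? This is not automatic from any degree or structural observation; the paper proves it by contradiction using the forbidden large connected matching. If $|E(G_2[V_1,U_1])|>8\gamma n^2$, then after discarding the edges meeting $Q\cap V_1$ and the garbage, more than $5\gamma n^2$ edges remain in $G_2[F_1-Q,\,U_1]$, so by K\"onig--Egerv\'ary that bipartite graph has a matching $M_1$ of size at least $2.5\gamma n$. Letting $Z_1$ be the $F_1$-endpoints of $M_1$, each vertex of $F_2-Q$ still has more than $(2-5\gamma)n$ neighbours in $F_1-Q-Z_1$ in $G_2'$, so one can greedily find a matching $M_2$ saturating $F_2-Q$ there; since every edge of $M_1\cup M_2$ meets $V'-Q-R$, the matching is connected, and $|M_1|+|M_2|\ge 2.5\gamma n+(2n-1-7\epsilon n-(1+\gamma)n)>(1+\gamma)n$, a contradiction. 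Without this argument (or an equivalent one), the claim is not proved; your steps (3) and (5) assert the needed bounds but do not supply a mechanism for them.
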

\begin{proof}
Since $F$ is the only nontrivial component of $G'_1-R$, 
\[
	\alpha'(G'_1-R)=\alpha'_*(G'_1-R)\leq \alpha'_*(G'_1) \leq (1+\gamma)n.
\]
Hence  by Theorem~\ref{konig-egervary}, $F$ has a vertex cover $Q$ with $|Q|\leq (1+\gamma)n$. Without loss of generality, $|Q \cap V'_1| \leq |Q \cap V'_2|$.
Let $U_1 = Q \cap F_2$ and let $U_2 = V - V_1 - U_1$. We will show that $(V_1, U_1, U_2)$ is an $(8\gamma,2,2)$-bad partition of $V$.

Before verifying the definition of such a partition, we prove some preliminary properties of $U_1$ and $U_2$.

First, by~\eqref{d11},
\begin{equation}\label{d12}
	|V_1'-Q|\geq(1.5-\frac{\gamma}{2}-5\epsilon)n-1
		\text{ and } 
	|V_2'-Q|\geq 2n-1-5\epsilon n-(1+\gamma)n=(1-\gamma-5\epsilon)n-1. 
\end{equation}
Let $H$ be the bipartite graph $G' - Q - R$. By our choice of $V'$,
\begin{equation}\label{d121} 
\parbox{15cm}{
	\em each vertex of $H$ is adjacent to all but at most $\epsilon n$ vertices in the other part.
}
\end{equation}
Moreover, since $Q$ is a vertex cover in $F$, $H$ contains no edges of $G'_1$, so $H = G'_2 - Q - R$. 

By~\eqref{d12} and $r \leq 2\epsilon n$, we have $|V'_j - Q - R| \geq (1 - \gamma - 7\epsilon)n - 1$ for $j=1,2$, and the degree condition of \eqref{d121} tells us that $H$ is connected. Therefore $\alpha'(H) = \alpha'_*(H)$ and, more generally,
\begin{equation}\label{bip-connected-matching}
	\mbox{\em every matching in $G'_2$ such that each edge meets $V'-Q-R$ is  connected.}
\end{equation}


If we greedily construct a matching of size $(1+\gamma)n$ in $H$ by matching vertices in $V'_2 - Q - R$ for as long as possible,
 by~\eqref{d121} we will construct a matching of size at least $\min\{|V'_1 - Q - R| - \epsilon n, |V'_2 - Q - R|\}$, and by~\eqref{bip-connected-matching}, this matching is connected. From~\eqref{d12}, we see that $|V'_1 - Q - R| - \epsilon n > (1+\gamma)n$; therefore 
\begin{equation}\label{v2-q-r size}
	|V'_2 - Q - R| \leq (1 + \gamma)n.
\end{equation}

We are now ready to verify conditions (i)--(v) of an $(8\gamma,2,2)$-bad partition for $(V_1, U_1, U_2)$, though for convenience we will not check them in order.

\medskip\noindent\textbf{(iv) and (v):} We have $|U_1| \leq |Q| \leq (1+\gamma)n$. Meanwhile, $U_2 \subseteq (V'_2 - Q - R) \cup R \cup \tilde{V} \cup V_3$, so $|U_2| \leq |V'_2 - Q - R| + 2\epsilon n + \epsilon n + 4\epsilon n$. By~\eqref{v2-q-r size}, $|U_2| \leq (1 + \gamma + 7\epsilon)n$. On the other hand, $|U_1| + |U_2| = |V - V_1| \geq 2n - 1$, giving us the lower bounds $|U_1| \geq (1 - \gamma - 7\epsilon)n - 1$ and $|U_2| \geq (1 - \gamma)n - 1$.

\medskip\noindent\textbf{(iii):} By~\eqref{d11}, $|V_1| \geq 2n-1-5\epsilon n$.

\medskip\noindent\textbf{(ii):} Since $Q$ is a vertex cover in $F$, every edge in $G_1[V_1, U_2]$ intersects either $Q\cap V_1$ or $V_3\cup \widetilde{V}\cup R$. Since $|V_3\cup \widetilde{V}\cup R|\leq 7\epsilon n$, there are at most $(2n-1)(7\epsilon n) < 14 \epsilon n^2$ edges between $V_1$ and $V_3 \cup \widetilde{V} \cup R$. By (S5), $|Q| \leq (1 + \gamma)n$, and we have checked that $|U_1| \geq (1 - \gamma - 7\epsilon)n - 1 \geq (1 - \gamma - 8\epsilon)n$, we have 
\begin{equation}\label{d13}
	|Q \cap V_1| = |Q| - |U_1| \leq (2\gamma + 8\epsilon) n.
\end{equation}
In particular, there are at most 
$$|U_2| \cdot |Q \cap V_1| \le (1 + \gamma + 7\epsilon)(2 \gamma + 8\epsilon)n^2 \leq 3 \gamma n^2$$ edges between $Q \cap V_1$ and $U_2$.
Therefore $|E(G_1[V_1, U_2])| \leq (3 \gamma + 14\epsilon)n^2$.

\medskip\noindent\textbf{(i):} Suppose for the sake of contradiction that $|E(G_2[V_1,U_1])|> 8\gamma n^2$. By~\eqref{suit-s3} and $|Q|\leq (1+\gamma)n$, $|E(G_2[\widetilde{V}_1\cup R,U_1])|\leq (3\epsilon n) |Q|\leq 3\epsilon(1+\gamma)n^2.$ Similarly, by~\eqref{d13}, 
\[
	|E(G_2[Q \cap V_1,U_1])|\leq |Q \cap V_1|\cdot |Q|\leq 
(2\gamma+13\epsilon)n(1+\gamma)n.
\]
Therefore $|E(G_2[V_1,U_1])|$ can only exceed $8\gamma n^2$ if
\[
	|E(G_2[F_1-Q,U_1])|> (8\gamma-(2\gamma+13\epsilon)(1+\gamma)-3\epsilon(1+\gamma)) n^2>5\gamma n^2.
\]
Since the degree of each vertex in $G[(F_1-Q)\cup U_1]$ is at most
$\max\{|F_1-Q|,|U_1|\}<2n$, this implies that the size $\beta$ of a minimum vertex cover in $G_2[V_1-Q,U_1]$ is at least $2.5\gamma n$.
Then by Theorem~\ref{konig-egervary}, $G_2[F_1-Q,U_1]$ has a matching of size $\beta\geq 2.5\gamma n$. Let $M_1$ be a matching in $G_2[F_1-Q,U_1]$ with $|M_1|=2.5 \gamma n$. Let $Z_1$ be the set of the endpoints
of the edges in $M_1$ that are in $F_1 -Q$. By~\eqref{d121}, each vertex in $F_2-Q$ has in $G'_2$ at least
$|F_1-Q-Z_1|-\epsilon n $ neighbors in $F_1-Q-Z_1$. By~\eqref{d11} and~\eqref{d13}, the number of neighbors each vertex in $F_2-Q$ has in $G'_2$ is at least
\[
	2n-1-7\epsilon n-(2\gamma+13\epsilon)n-2.5\gamma n-\epsilon n>(2-5\gamma)n.
\]
Thus, $G'_2[F_2-Q, F_1-Q-Z_1]$ has a matching $M_2$ covering $F_2-Q$. By~\eqref{bip-connected-matching}, $M_1\cup M_2$ is a connected matching in $G'_2$. And by~\eqref{d11},
\[
	|M_1\cup M_2|=2.5\gamma n+|F_2-Q|\geq 2.5\gamma n+2n-1-7\epsilon n -(1+\gamma)n>(1+\gamma)n,
\]
a contradiction. Thus, $|E(G_2[V_1,U_1])|\leq 8\gamma n^2$.

Therefore the partition $(V_1,U_1,U_2)$ is  $(8\gamma,2,2)$-bad.
\end{proof}

\begin{claim}\label{bip-large-r}
	If $r > 2\epsilon n$, then $V$ has a $(2\gamma, 1, 1)$-bad partition.
\end{claim}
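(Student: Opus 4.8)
\textbf{Proof proposal for Claim~\ref{bip-large-r}.}

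The plan is to mimic the strategy of Claim~\ref{bip-small-r}, but now produce a $(\lambda,1,1)$-bad partition with $W_2$ playing the role of the "small side" that the color-$2$ matching should essentially avoid. The natural candidate is $W_2 := V(F)\cap V'_j$ for the part $V'_j$ where the large color-$1$ component $F$ meets the smaller side, i.e.\ after relabelling so that $|F_1| \le |F_2|$, try $W_2 = F_1$ (possibly together with the relevant pieces of $\widetilde V$ and $V_s$), and $W_1 = V - W_2$. The point of the hypothesis $r = |R| > 2\epsilon n$ is exactly that $F$ misses at least $2\epsilon n$ vertices of one part, so $F$ cannot "see" a huge independent set in $G'_1$; combined with \eqref{d121}-type degree bounds, this forces $F$ to be rather lopsided, which is what lets us bound $|W_2|$ between $(1-\lambda)n$ and $(1+\lambda)n_1$ as required by condition (i) of a $(\lambda,1,1)$-bad partition.

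First I would record the basic size estimates: since $R$ is the smaller of $V'_1 - V(F)$ and $V'_2 - V(F)$ and $r > 2\epsilon n$, both parts miss $F$ substantially, and by the degree condition \eqref{2e''}/\eqref{d121} every vertex of $V'_{3-j}$ outside $F$ would be joined in $G'_1$ to almost all of $F_j$ — so if $F_j$ were too large, $F$ would actually swallow those vertices, a contradiction. Quantifying this pins down $|F_1|$ and $|F_2|$ up to $O(\epsilon n)$ error in terms of $n$ and $n_1$, and gives condition (i) for $W_2 = F_1$ (after adjoining at most $5\epsilon n$ deleted vertices). Next, condition (ii): $|E(G_1[W_1,W_2])| \le \lambda n^2$. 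Here I'd argue that almost all color-$1$ edges incident to $F_1$ go to $F_2 \subseteq W_1$; wait — that is inside $W_1 \times W_2$ only if one endpoint is in $W_2=F_1$, so actually I need that $F$ has few color-$1$ edges \emph{within} a side, which is automatic since $G'$ is bipartite, and few color-$1$ edges from $F_1$ to $W_1 \setminus F_2$; but $W_1 \setminus F_2$ consists of $R$-vertices and deleted vertices, a set of size $O(\epsilon n + r)$... this is the delicate point, so I would instead choose $W_2$ to be $F_2$ (the large side) when that makes (ii) cheap, splitting into subcases by whether $R \subseteq V'_1$ or $R \subseteq V'_2$. In each subcase one side of $F$ is "full" (size close to the corresponding $n'_k$) and the color-$1$ edges across the cut $W_1\,|\,W_2$ are confined to a set of $O(\epsilon n + \text{(small)})$ vertices times $O(n)$, giving the $\lambda n^2$ bound.

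For condition (iii), $|E(G_{2}[W_1])| \le \lambda n^2$: this is where the maximality of $F$ and the assumed absence of a connected $M_{(1+\gamma)n}$ in $G'_2$ must be combined. Since $F$ is the \emph{largest} component in $G'_1 \cup G'_2$, any color-$2$ component has at most $|V(F)|$ vertices; more importantly, $W_1$ contains the large side $F_2$ (or $F_1$) plus $R$ plus deleted vertices, and a dense bipartite graph in $G'_2[W_1]$ between $F_j \cap W_1$ and $R$ (or between the two halves of $W_1$) together with the almost-completeness \eqref{d121} would yield a connected color-$2$ matching of size exceeding $(1+\gamma)n$, exactly as in the contradiction argument at the end of Claim~\ref{bip-small-r} (build $M_1$ from a large vertex cover via K\"onig--Egerv\'ary, then extend greedily using the high-degree vertices, invoking a connectivity statement analogous to \eqref{bip-connected-matching}). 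Pushing the numbers through should leave the constant comfortably below $2\gamma$.

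\textbf{Main obstacle.} The hard part will be condition (iii) and the bookkeeping of which part $R$ lies in: one must ensure that the chosen $W_1$ genuinely cannot carry a dense $G'_2$-subgraph, and the argument needs the quantitative slack between $r > 2\epsilon n$, the near-completeness error $\epsilon n$, and the target $(1+\gamma)n$ with $\gamma > 1000\epsilon$. Getting a \emph{connected} color-$2$ matching (not just a large matching) out of the putative dense subgraph — i.e.\ proving the right analogue of \eqref{bip-connected-matching} for the relevant vertex set inside $W_1$ — is the step most likely to require care, since $W_1$ is no longer a clean "other part" of a bipartition. I expect the subcase $R \subseteq V'_2$ (so $F$ occupies almost all of $V'_1$) to be the cleaner one, and $R \subseteq V'_1$ to need the extra observation that then $|F_1|$ is bounded away from $n_1$, which is what keeps $|W_2|$ within the window demanded by (i).
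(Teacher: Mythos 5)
Your choice of partition is the wrong one, and the difficulty you yourself flag as ``the delicate point'' is fatal rather than delicate. If $W_2$ is a single side of the component $F$ (say $W_2=F_1$, so $F_2\subseteq W_1$), then every colour-$1$ edge of $F$ between $F_1$ and $F_2$ lies in the cut $G_1[W_1,W_2]$, and there is no way to bound the number of such edges by $2\gamma n^2$: both sides of $F$ have size at least $(1-5\epsilon)n$, and $G_1[F_1,F_2]$ may be an almost complete bipartite graph with parts of sizes roughly $2n$ and $n$, hence with $\Theta(n^2)$ edges, while its maximum matching is still only about $n<(1+\gamma)n$ --- so the hypothesis on $\alpha'_*(G'_1)$ gives no contradiction. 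Switching to $W_2=F_2$, or case-splitting on which part contains $R$, does not help, since the edges of $F$ cross the cut for either single-side choice. Condition (iii) also breaks for your $W_1$: every $G'$-edge from $\overline F_1:=V_1'-F_1$ to $F_2$ must have colour $2$ (a colour-$1$ edge would absorb its $\overline F_1$-endpoint into $F$), and $\overline F_1$ can contain on the order of $n$ vertices, so $G_2[W_1]$ can carry $\Theta(n^2)$ edges.

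The partition that works is $W_1=V(F)=F_1\cup F_2$ and $W_2=V-W_1$, i.e.\ the whole component versus its complement. Then (ii) is essentially free: since $F$ is a component of $G'_1$, every $G_1$-edge leaving $V(F)$ has an endpoint in $V_3\cup\widetilde V$, giving at most $5\epsilon n\cdot|W_1|<2\gamma n^2$ such edges. Condition (iii) follows from the maximality of $F$: because $|\overline F_1|,|\overline F_2|\ge r>2\epsilon n$, each of $F_1$ and $F_2$ lies in a single component of $G'_2$ (any two vertices of $F_1$ have a common $G'_2$-neighbour in $\overline F_2$), and a single $G_2$-edge between $F_1$ and $F_2$ would merge these into a $G'_2$-component strictly larger than $F$; hence $E(G_2[W_1])=\emptyset$. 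For (i), the upper bound on $|W_2|$ comes from $|F_1|,|F_2|>(1-5\epsilon)n$, and the lower bound requires observing that $|F_2|\le(1+\gamma)n$, since otherwise $G_1[W_1]$ is a near-complete bipartite graph containing a connected matching saturating $F_2$, contradicting the hypothesis. Your proposal does contain the right ingredients for the size estimates on $F_1$ and $F_2$ and for invoking maximality of $F$, but with the cut placed through the middle of $F$ the verification of (ii) and (iii) cannot be completed.
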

\begin{proof}
For $j = 1,2$ let $\overline F_j = V'_j - F_j$. We know that
\begin{equation}\label{d16}
	\min\{|\overline F_1|, |\overline F_2|\}\geq r\geq 2\epsilon n.
\end{equation}
Without loss of generality, let $|F_1| \geq |F_2|$. Let $W_1 = V(F) = F_1 \cup F_2$ and $W_2 = V - W_1$. We will show that $(W_1, W_2)$ is a $(2\gamma,1,1)$-bad partition of $V$.

Before verifying the definition of such a partition, we will prove lower bounds on $|F_1|$ and $|F_2|$.

First, any vertex $v \in V_1$ has degree at least $|V_2'| - \epsilon n$ in $G'$, which is at least $(2-6\epsilon)n - 1$ by~\eqref{d11}. Therefore in some $G_i'$, $v$ has degree at least $(1-3\epsilon)n-1$, giving a connected component with $(1-3\epsilon)n$ vertices. Hence $|F| \ge (1-3\epsilon)n$ as well; in particular, $|F_1| \ge (1-3\epsilon) n/2$.

Second, suppose that $|F_2| \leq (1-5\epsilon)n$; in this case, by~\eqref{d11}, 
\[
	|\overline F_2| \geq (2n-1-5\epsilon n) - (1 - 5\epsilon)n = n-1,
\] 
and in particular, $|\overline F_2| > |F_2|$. Then $G_2'[F_1, \overline F_2]$ is connected: each vertex is adjacent to all but $\epsilon n$ vertices on the other side, and both $|F_1|$ and $|\overline F_2|$ are much larger than $2\epsilon n$. Hence  $G'_2$ has a component containing $F_1\cup \overline F_2$, and the size of this component is larger than $|F|$, a contradiction to the choice of $F$. 

Therefore $|F_1| \geq |F_2| > (1-5\epsilon)n$, and we are now ready to verify the conditions of a $(2\gamma,1,1)$-bad partition. Again, we will not check them in order.

\medskip\noindent\textbf{(iii):} We will actually show that $E(G_2[W_1]) = \emptyset$. First, $G'_2$ has a connected component containing $F_1$: each vertex of $F_1$ is adjacent (in $G$, and therefore in $G'_2$) to all but $\epsilon n$ vertices of $\overline F_2$; so since $|\overline F_2| > 2\epsilon n$, any two vertices of $F_1$  have a common neighbor in $\overline F_2$. Similarly, $G'_2$ has a connected component containing $F_2$.

Suppose that $G_2$ has an edge $xy$ with $x \in F_1$ and $y \in F_2$. Then the two components above must be the same component, which contains $F_1 \cup F_2$ as well as some vertices of $\overline F_1, \overline F_2$, contradicting the maximality of $F$.

\medskip\noindent\textbf{(i):} By~\eqref{d11}, the quantity in the upper bound of Condition~(i) for $\lambda=2\gamma$ is at least 
\[
	(1 + 2\gamma)(2n - 1 - 5\epsilon n) \ge (2 + 4\gamma - 5\epsilon + 10\gamma \epsilon) n - 2 > (2 + 3\gamma)n - 2.
\]
We know
\[
	|W_2| \leq N - |F_1| - |F_2| \leq 2(2n-1) - 2(1 - 5\epsilon)n = (2 + 10\epsilon)n - 2< (2 + 3\gamma)n - 2.
\]

If $|F_2| \leq (1+\gamma)n$, then  by~\eqref{d11}, 
\[
	|W_2| \geq |V'_2 - F_2| \geq (2n - 1 - 5\epsilon n) - (1 + \gamma)n  > (1 - 2\gamma)n, 
\]
and the lower bound of (i) also holds. Otherwise, $|F_1| \geq |F_2| > (1+\gamma)n$. We have seen that $E(G_2[W_1]) = \emptyset$, and therefore $G[W_1] = G_1[W_1]$ is a bipartite graph where each vertex is adjacent to all but at most $\epsilon n$ vertices on the other side. A vertex cover of $G_1[W_1]$ must contain either $F_1$ or $F_2$ or all but $\epsilon n$ vertices of both, so it has at least $|F_2|$ vertices. Hence by Theorem~\ref{konig-egervary},
 $G_1[W_1]$  has a matching saturating $F_2$, contradicting our choice of $G$.

\medskip\noindent\textbf{(ii):} For every edge $e$ in $G_1[W_1,W_2]$, one of the endpoints must be in $V_3\cup \widetilde{V}$.
Since $|V_3\cup \widetilde{V}|\leq 5\epsilon n$, $|E(G_1[W_1,W_2])|\leq 5\epsilon n|W_1|\leq 20 \epsilon n^2<2\gamma n^2$.
Therefore the partition $(W_1, W_2)$ is $(2\gamma, 1,1)$-bad.
\end{proof}

One of  the Claims~\ref{bip-small-r} and~\ref{bip-large-r} will always apply, proving Theorem~\ref{stability} for $s'=2$.

\subsection{Constructing the Gallai--Edmonds decomposition}
\label{stab-ge-section}

We will now assume $s'\geq 3$.
For $i\in [2]$, let $C_i$ denote the vertex set of the largest component in $G'_i$ and $c_i=|C_i|$. We begin with a claim which will prepare us to use Theorem~\ref{GE} to find a Gallai--Edmonds decomposition of $G'_1[C_1]$.

\begin{claim}\label{stab-compl}
If $|V'-C_i|\geq 4\epsilon n$, then  $G'_{3-i}$ has only one nontrivial component $D$, and there is some $j\in [s']$ such that $ D\supseteq V'-V'_j$.
In particular, if $|V'-C_i|\geq 4\epsilon n$, then $\alpha'(G'_{3-i})=\alpha'_*(G'_{3-i})$.
\end{claim}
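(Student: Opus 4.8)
The goal is to show that if $G'_i$ has a largest component $C_i$ with $|V'-C_i|\geq 4\epsilon n$, then $G'_{3-i}$ has only one nontrivial component $D$, and moreover $D\supseteq V'-V'_j$ for some part $V'_j$; the ``in particular'' statement (equality of matching number and connected matching number) then follows at once, since every edge of $G'_{3-i}$ that is not inside $V'_j$ lies in $D$, and $V'_j$ is independent so there are no edges of $G'_{3-i}$ inside it. The proof should closely mirror the structure of Claim~\ref{compl} in the proof of Theorem~\ref{2conn}, but using the near-completeness property~\eqref{2e''} in place of exact completeness of the multipartite graph, and the hypothesis $|V'-C_i|\geq 4\epsilon n$ (together with $n'_{s'}\geq 4\epsilon n$) to guarantee that vertices outside $C_i$ can ``see'' enough of $V'$ through color $3-i$.

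First I would fix $i$ (say $i=1$ by symmetry) and set $\overline{C}=V'-C_1$, so $|\overline{C}|\geq 4\epsilon n$. Every edge of $G'$ with one endpoint in $\overline{C}$ and the other in $C_1$ must be colored $2$ (i.e.\ lies in $E_2$), since $C_1$ is a whole component of $G'_1$; likewise any edge inside $\overline{C}$ that happens to be in $E_1$ would have to stay inside $\overline C$'s $G'_1$-components, which are all trivial or small — but in fact the cleanest route is to argue directly that $\overline C$ together with the $E_2$-edges from $\overline C$ to $C_1$ forces $C_1$ into a single $G'_2$-component. Pick any two vertices $x,y\in C_1$. Each of $x,y$ lies in some part; by~\eqref{2e''} each of $x,y$ is adjacent in $G'$ to more than half of every other part $V'_{j'}$. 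Since $|\overline C|\geq 4\epsilon n$ but $\overline C$ might be concentrated in one part, I need to split into cases on how $\overline C$ meets the parts, exactly as in the three cases of Claim~\ref{compl}: (1) $\overline C$ lies in a single part $V'_j$; (2) $\overline C$ meets exactly two parts; (3) $\overline C$ meets at least three parts. In cases (2) and (3) any two vertices of $C_1$ have a common neighbor in $\overline C$ reachable by $E_2$-edges (using that more than half of each relevant part is adjacent to each of $x,y$, and $|\overline C\cap V'_{j'}|$ is large enough, or that a vertex of $\overline C$ in a third part is non-adjacent to at most one of $x,y$), so $C_1\cup \overline C\setminus(\text{one exceptional part})$ all lies in one $G'_2$-component $D$; since $V'_j$ is the only possibly-missing part and it is independent, $D\supseteq V'-V'_j$, and all $G'_2$-edges not inside $V'_j$ are in $D$, so $D$ is the unique nontrivial component. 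In case (1), $\overline C\subseteq V'_j$; then for $x\in C_1$ not in $V'_j$, $x$ is adjacent to more than half of each other part and in particular has neighbors in $C_1\setminus V'_j$ via $E_2$ if those are $E_2$-edges — here I'd argue that $C_1\setminus V'_j$ plus all of $\overline C$ lie in one $G'_2$-component because $C_1\setminus V'_j$ must already be $G'_2$-connected (each pair has a common neighbor in the large set $\overline C\subseteq V'_j$) and each such vertex is $E_2$-joined to $\overline C$, giving $D\supseteq V'-V'_j$.

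The main obstacle, and the step to treat carefully, is case~(1) together with a sub-subtlety of case~(3): ruling out the possibility that $C_1\setminus V'_j$ is itself disconnected in $G'_2$, or that some vertex $v\in C_1$ has \emph{no} $E_2$-neighbor outside $V'_j$. In Claim~\ref{compl} this was handled by the exact structure $G[V'_j]$ empty plus completeness elsewhere; here I must instead use that every two vertices outside $V'_j$ have, by~\eqref{2e''}, more than $\tfrac12 n'_{s'}\geq 2\epsilon n$ common neighbors in \emph{any} fixed other part — and in particular a common neighbor in $\overline C$ if $\overline C$ is large in that part, or (if $v$'s $G'_1$-edges to $C_1\setminus V'_j$ are the obstruction) I fall back to showing $V'-C_1\subseteq V'_j$ forces all $G'_2$-edges to lie in $D$ exactly as in the last paragraph of Claim~\ref{compl}'s Case~3. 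I would also record the quantitative slack: $|\overline C|\geq 4\epsilon n$ and $n'_{s'}\geq 4\epsilon n$ are exactly what make ``more than half of a part'' intersect $\overline C$ whenever $\overline C$ is spread over few parts, which is why the threshold $4\epsilon n$ appears in the hypothesis. Once the unique nontrivial component $D$ with $D\supseteq V'-V'_j$ is established, the final sentence follows immediately: a maximum matching of $G'_{3-i}$ uses only edges of $D$, so it is connected, giving $\alpha'(G'_{3-i})=\alpha'_*(G'_{3-i})$.
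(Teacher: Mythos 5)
Your overall plan---adapt Claim~\ref{compl} using the quantitative degree condition~\eqref{2e''}, and read off the ``in particular'' statement from $V'-D\subseteq V'_j$ together with the independence of $V'_j$---matches the paper's intent, and your case (1) and case (2) (where $\overline C := V'-C_1$ lies in one part, or in two parts so that one of them contains at least $2\epsilon n$ vertices of $\overline C$) do go through. But case (3) as you describe it has a genuine gap. Both mechanisms you offer there can fail when $\overline C$ has size close to $4\epsilon n$ and is spread over three or more parts with fewer than $2\epsilon n$ vertices in each (note $4\epsilon n/s'<2\epsilon n$ already for $s'=3$, so ``$|\overline C\cap V'_{j'}|$ is large enough'' cannot be assumed): two vertices $x,y\in C_1$ need not have a common neighbor in $\overline C$, since each of them may have almost $\epsilon n$ non-neighbors outside its own part, enough to swallow every sub-$2\epsilon n$ piece of $\overline C$; and a single vertex of $\overline C$ in a third part can likewise be non-adjacent to both $x$ and $y$. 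The step ``each vertex of $W_2$ is adjacent to at least two of $v_1,v_2,v_3$'' in Case~3 of Claim~\ref{compl} is precisely the step that does not survive the passage from complete multipartite to almost-complete, and your proposed fallback (reducing to $V'-C_1\subseteq V'_j$) presupposes case~(1) rather than resolving case~(3).

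The paper escapes this by running the pigeonhole over the \emph{parts} rather than over $\overline C$. It first replaces $V'-C_1$ by a union $X_2$ of components of $G'_1$ with $|X_2|\geq 4\epsilon n$ and $|X_1|\geq n$ for $X_1=V'-X_2$, so that every $X_1$--$X_2$ edge of $G'$ is automatically in $G'_2$. Since each part $V'_l$ has at least $4\epsilon n$ vertices, it meets one of $X_1,X_2$ in at least $2\epsilon n$ vertices; with $s'\geq 3$, some side $X_k$ achieves this for two parts $V'_j,V'_{j'}$. The sets $X_k\cap V'_j$ and $X_k\cap V'_{j'}$ then serve as hubs giving common $G'_2$-neighbors to all pairs in $X_{3-k}-V'_j$ and in $X_{3-k}-V'_{j'}$ respectively, which connects all of $X_{3-k}$; afterwards each vertex of $X_k$ outside at most one exceptional part is attached to $X_{3-k}$ by a single cross edge. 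In your problematic configuration this amounts to connecting $\overline C$ through large subsets of $C_1$ first and only then attaching the vertices of $C_1$---that is, using $G'_2$-paths of length two or three passing through both sides---rather than joining pairs of $C_1$-vertices directly through $\overline C$. Your case (3) needs to be rewritten along these lines.
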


\begin{proof} By symmetry, suppose $|V'-C_1|\geq 4\epsilon n$. We begin by constructing a partition $(X_1, X_2)$ of $V'$ with $C_1 \subseteq X_1$.

If $|C_1|\geq n$, then let  $X_2=V'-C_1$. Otherwise, since $N' - |C_1| \ge 2n-1-5 \epsilon n$ and $|C_1| < n$, we obtain $X_2 \subseteq V'-C_1$ by deleting vertex sets of several components of $G'_1$ so that  $n\leq |V'-X_2|<2n$: If $N' - |C_1| \ge 2n$ then since $|C_1| < n$ we delete components until the inequality holds; otherwise, we pick $V' - C_1$. Let $X_1=V'-X_2$.
In any case,
\begin{equation}\label{7n}
|X_2|\geq 4\epsilon n\;\mbox{ and}\;  |X_1|\geq n.
\end{equation}

\medskip\noindent\textbf{Case 1:}
There are $k\in [2]$ and $j,j'\in [s']$ such that $X_k\subseteq V'_j\cup V'_{j'}$. Suppose
$|V'_j\cap X_k|\geq |V'_{j'}\cap X_k|$. Since $s'\geq 3$, there is $j''\in [s']-\{j,j'\}$. By the case, $V'_{j''}\subseteq X_{3-k}$.
Then each  $v\in X_k$ is non-adjacent in $G'_2$ to fewer than $\epsilon n$ vertices in $V'_{j''}$. Since $|V'_{j''}|\geq 4\epsilon n$, 
 every  two vertices in $X_k$ have a common neighbor in $G'_2$.
 So, $G'_2$ has a component
$D$ containing $X_k$. By~\eqref{7n} and the choice of $j$  such that $|V'_j\cap X_k|\geq |V'_{j'}\cap X_k|$, each vertex  in $V(G'_2)-V'_j$ has a neighbor in $X_k$ and hence belongs to $D$.
So, $V'-D\subset V'_j$ and thus
$\alpha'(G'_{2})=\alpha'_*(G'_{2})$.

\medskip\noindent\textbf{Case 2:}
Case 1 does not hold. Since  $s'\geq 3$ and $|V'_j|\geq 4\epsilon n$ for each $j\in [s']$,
 there are $k\in [2]$ and $j,j'\in [s']$ such that $|X_k\cap V'_j|\geq 2\epsilon n$ and $|X_k\cap V'_{j'}|\geq 2\epsilon n$ by the pigeonhole principle.
 Since $|X_k\cap V'_j|\geq 2\epsilon n$,
 every  two vertices in $X_{3-k}-V'_j$ have a common neighbor in $X_k\cap V'_j$ in  $G'_2$.
 So, $G'_2$ has a component
$D$ containing $X_{3-k}-V'_j$.  Similarly, $G'_2$ has a component
$D'$ containing $X_{3-k}-V'_{j'}$. Since Case 1 does not hold, there is  $v\in X_{3-k}-V'_j-V'_{j'}$. This means $D=D'$ and $D\supset  X_{3-k}$.
By~\eqref{7n}, there is at most one $j''\in [s']$ such that  $|X_{3-k}-V'_{j''}|< \epsilon n$ (maybe $j''\in \{j,j'\}$).
Each vertex  in $X_k-V'_{j''}$ has a neighbor in $X_k$ and hence belongs to $D$.
So, $V(G'_2)-D\subset V'_{j''}$ and thus
$\alpha'(G'_{2})=\alpha'_*(G'_{2})$.
\end{proof}

From now on, we assume $c_1\geq c_2$.  Let $B=V'-C_1$ and $b=|B|=N'-c_1$. 

\begin{claim}\label{big} 
  $b\leq  n'_1/2$. 
\end{claim}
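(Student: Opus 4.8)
\textbf{Proof proposal for Claim~\ref{big}.}

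The plan is to argue by contradiction: suppose $b > n_1'/2$. Recall $b = N' - c_1 = |B|$, where $B = V'-C_1$ and $C_1$ is the vertex set of the largest component of $G_1'$. Since we are in the case $s' \ge 3$, every part $V_j'$ has size at least $4\epsilon n$ by~\eqref{suit-s5}, and by~\eqref{2e''} every vertex of $V'$ is adjacent to more than half of each other part. The key is that $B$ being large lets Claim~\ref{stab-compl} apply (we have $|V'-C_1| = b > n_1'/2 \ge 2\epsilon n \ge 4\epsilon n$ after checking the constant; more precisely $n_1' \ge n/2$ from~\eqref{suit-s5} so $b > n/4 \gg 4\epsilon n$), so $G_2'$ has exactly one nontrivial component $D$ with $D \supseteq V' - V_j'$ for some $j \in [s']$, and in particular $\alpha'(G_2') = \alpha'_*(G_2')$.

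First I would extract structural information about $B$. Since $C_1$ is a \emph{component} of $G_1'$, there are no $G_1'$-edges between $B$ and $C_1$; hence every edge of $G$ between $B$ and $C_1$ lies in $E_2$ (edges may be bi-colored, but all such edges are in $E_2$). Now I want to find a large connected matching in $G_2'$, contradicting the hypothesis $\alpha'_*(G_2') \le (1+\gamma)n$ (respectively $\max\{\alpha'_*(G_1'),\alpha'_*(G_2')\} \le (1+\gamma)n$, which holds since $G_i'$ is a subgraph of $G_i$). The bipartite-type graph $G[B, C_1]$ has, on the $B$ side, vertices each adjacent in $G_2'$ to all but at most $\epsilon n$ vertices of $C_1$ that lie in other parts — and since $b$ is large while each part is bounded by $2n-1$, most of $C_1$ is "reachable." I would use König--Egerv\'ary (Theorem~\ref{konig-egervary}) on $G_2'[B, C_1]$: a vertex cover must contain, for all but at most one part $V_j'$, essentially all of $B$ or all of $C_1$ on that part; combined with $b > n_1'/2$ and $c_1 \ge b$, this forces a matching in $G_2'[B, C_1]$ of size close to $\min\{b, c_1 - (\text{something small})\}$, hence at least roughly $b - \epsilon n$ or more. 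Because $D$ is the unique nontrivial component of $G_2'$ and contains $V' - V_j'$, any matching whose edges avoid being concentrated inside $V_j'$ is automatically connected.

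The arithmetic obstacle, which I expect to be the main difficulty, is showing this matching has size exceeding $(1+\gamma)n$. We have $N' > N - 5\epsilon n \ge 3n - 1 - 5\epsilon n$ and $c_1 + b = N'$ with $c_1 \ge b$, so $b \le N'/2$; combined with $b > n_1'/2$ and $n_1' \ge N'/s' \ge N'/5$ (or better, using~\eqref{suit-s5}: $n_1' \le 2n-1$), one gets $b > n_1'/2$ but also $c_1 = N' - b$. If $b$ were, say, between $n_1'/2$ and $n$, then $c_1 = N' - b \ge 3n - 1 - 5\epsilon n - n = 2n - 1 - 5\epsilon n$, and a matching of size $\min\{b, c_1\} - \epsilon n = b - \epsilon n$ would only beat $(1+\gamma)n$ if $b > (1+\gamma)n + \epsilon n$; so I would need a separate sub-argument when $n_1'/2 < b \le (1+\gamma+\epsilon)n$. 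In that regime I would instead build the matching \emph{within one part's worth of structure differently}: use that $c_1 \ge 2n-1-5\epsilon n$ together with the degree condition~\eqref{2e''}, greedily matching $B$ into $C_1$ and then extending by matching leftover vertices of $C_1$ among themselves across parts (again staying inside $D$), reaching size close to $N'/2 > 1.5n - 1 - 3\epsilon n > (1+\gamma)n$ since $\gamma < 10^{-3}$. Either way the contradiction with $\alpha'_*(G_2') \le (1+\gamma)n$ closes the argument; the care needed is entirely in tracking which part $V_j'$ the exceptional vertices sit in so that connectivity (via $D \supseteq V' - V_j'$) is not lost, and in checking the $\epsilon$ versus $\gamma$ slack at each step using $\gamma > 1000\epsilon$.
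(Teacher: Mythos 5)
Your strategy---deriving a contradiction with $\alpha'_*(G'_2)\le(1+\gamma)n$ by building a large connected matching---has a gap exactly where you suspect it, and the patch you sketch does not work. When $n'_1/2 < b \le (1+\gamma)n$ (e.g.\ $b\approx 0.6\,n$), the bipartite graph $G'_2[B,C_1]$ can contribute a matching of size at most $b$, which is nowhere near $(1+\gamma)n$, so you must find extra $E_2$-edges elsewhere. Your proposed source---``matching leftover vertices of $C_1$ among themselves across parts''---is unjustified: $C_1$ is a \emph{component of $G'_1$}, so the cross-part edges inside $C_1$ exist in $G$ but may all carry colour $1$ only, and $G'_2[C_1]$ may be empty. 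Similarly $G'_2[B]$ may be empty. So in this regime no contradiction with the matching bound is available, and the argument cannot close. A second, smaller issue: you apply Claim~\ref{stab-compl} with $i=1$ (getting structure of $G'_2$), but you never establish the fact that actually drives the claim, namely that $B$ lies inside a single part.

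The intended argument contradicts the \emph{maximality of the component} $C_1$ rather than the matching bound, and needs no matching at all. Since $c_1\ge c_2$, the hypothesis $b>4\epsilon n$ lets you apply Claim~\ref{stab-compl} with $i=2$: the unique nontrivial component of $G'_1$ contains $V'-V'_j$ for some $j$, and that component is $C_1$, whence $B\subseteq V'_j$. Now every $G$-edge from $B$ to $C_1\supseteq V'-V'_j$ is in $E_2$, and the degree condition on $V'$ gives any two vertices of $B$ a common $G'_2$-neighbour in $V'-V'_j$ and vice versa; so $B\cup(V'-V'_j)$ lies in one component of $G'_2$, of size at least $N'-n'_1+b$. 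But every component of $G'_2$ has size at most $c_2\le c_1=N'-b$, so $N'-n'_1+b\le N'-b$, i.e.\ $b\le n'_1/2$. If you want to salvage your write-up, replace the matching count by this component-size count; the rest of your structural observations (all $B$--$C_1$ edges are in $E_2$, the common-neighbour argument) are correct and are exactly what the short proof uses.
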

\begin{proof} Suppose $b>n'_1/2$. Then $b>4\epsilon n$, so by Claim~\ref{stab-compl} applied to $G'_2$, 
 there is $j\in [s']$ such that $B\subset V'_j$. 
  Since $V'-V'_j\subseteq C_1$ and $|V(G')-V'_j|\geq 2n-1-5\epsilon n$, every two vertices in $B$ have in $G'_2$ a common neighbor in
$V'-V'_j$, and every two vertices in $V'-V'_j$ have a common neighbor in $B$. Thus $G'_2$ has a component $D$ that includes
$B$ and $V'-V'_j$. So 
$$N'-b=c_1\geq c_2\geq |D|\geq N'-|V_j'-B|\geq N'-n'_1+b.$$
Comparing the first and the last expressions in the inequality, we get $n'_1\geq 2b$.
\end{proof}

We are now ready to apply Theorem~\ref{GE} to get a Gallai--Edmonds decomposition of $G'_1[C_1]$, which we will then extend to a 
partition of $V'$. 

Since by Claim~\ref{big},
\[
	c_1 \geq N'-\frac{n'_1}{2}=\frac{1}{2}(N'+(N'-n'_1))\geq \frac{1}{2}(3n-1-5\epsilon n+2n-1-5\epsilon n)> 2(1+\gamma)n,
\]
and $\alpha'_*(G_1)<(1+\gamma)n$, we conclude that
$G'_1[C_1]$ has no perfect matching. Then there is a partition $C_1=A\cup  C\cup \bigcup_{j=1}^k D_j$ satisfying
 Theorem~\ref{GE}.  Let  $a=|A|$.
 
 If $N'-c_1\geq 4\epsilon n$, then also  $N'-c_2\geq 4\epsilon n$, and  by Claim~\ref{stab-compl}
  each vertex in $B$ is isolated
in $G'_1$.
In this case, we view $V'-A$ as the union $\bigcup_{i=0}^{k'}D'_i$, where $k'=k+b$, $D_0=C$. For $1\leq i\leq k$ we define  $D'_i=D_i$. Additionally, for $k+1\leq i\leq k'$, each $D_i$ is a vertex in $B$. By definition, $D_0$ could be empty.   

If $N'-c_1<4\epsilon n$, then we view $V'-A$ as the union $\bigcup_{i=0}^{k'}D'_i$, where $k'=k$, $D'_0=B\cup C$, and
$D'_i=D_i$  for $1\leq i\leq k$. In both cases, we reorder $D'_i$-s so that $|D'_1| \ge \ldots \ge |D'_{k'}|$ and define $d_i:=|D'_i|$ for $i \in [k']$.

Then
  by Theorem~\ref{GE},
 \begin{equation}\label{n10}
\alpha'_*(G'_1)=\alpha'(G'_1[C_1])=\frac{N'-b-k+a}{2}\geq \frac{N'-k'+a}{2}-2\epsilon n.
\end{equation}
 Since $N'\geq 3n-1-5\epsilon n$ and $\alpha'(G'_1)<(1+\gamma)n$,
\eqref{n10} yields a lower bound on $k'$:
 \begin{equation}\label{fn10}
k'\geq a+N'-4\epsilon n-\alpha'_*(G'_1)>a+N'-2(1+\gamma+2\epsilon)n>(1-3\gamma)n+a+2.
\end{equation}

\begin{claim}\label{G2} 
If $G'_2 - A$ is not connected, then the following holds:
\begin{enumerate}
\item[(a)] $a \leq 3\gamma n$;
\item[(b)] $G'_2 - A$ has only one nontrivial component;
\item[(c)] All isolated vertices of $G'_2 - A$ are in the same $V'_j$.
\end{enumerate}
\end{claim}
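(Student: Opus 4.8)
The plan is to assume $G'_2 - A$ is disconnected and to mine this for structure by repeatedly invoking Claim~\ref{stab-compl} together with the degree condition~\eqref{2e''}. First I would prove (a). Suppose $a > 3\gamma n$. Since by Theorem~\ref{GE} there is a matching in $G'_1$ saturating $A$, we get $\alpha'_*(G'_1) = \alpha'(G'_1) \ge a > 3\gamma n$; this alone is not a contradiction, so instead I would feed this back into~\eqref{fn10}: that inequality gives $k' > (1-3\gamma)n + a + 2$, and combined with $a$ large this forces $k'$ to be very large, i.e.\ $V' - A$ has many (small) components/isolated vertices in $G'_1$. On the other hand $|A| = a$ is large, and the components $D'_1, \dots, D'_{k'}$ together with $A$ exhaust $V'$, so $\sum d_i = N' - a$. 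Since $N' \le 4n-2$ by~\eqref{suit-s5} and $N' \ge 3n - 1 - 5\epsilon n$, having both $a$ and $k'$ of order $n$ is consistent only if most $D'_i$ are tiny. I would then argue that a matching in $G'_2 - A$ can be built greedily across these many small pieces — using that any vertex of $V'_j$ is adjacent in $G$ to more than half of each other $V'_{j'}$ by~\eqref{2e''}, and that $G'_2 - A$ restricted to $V' \setminus A$ still has all these cross edges except those recolored into $G'_1$ — to produce a connected matching in $G'_2$ of size exceeding $(1+\gamma)n$, contradicting $\alpha'_*(G_2) < (1+\gamma)n$. The cleanest route is: since $a > 3\gamma n$, we have $|V' - A| = N' - a < N' - 3\gamma n$, but also $N' - n'_j \ge 2n - 1 - 5\epsilon n$ for each $j$ (inherited from~\eqref{suit-s2'} and the deletion of $\widetilde V$), so $|A \cap V'_j|$ cannot be too large in every part, and~\eqref{2e''} lets any two vertices outside a common part see a common neighbor — forcing $G'_2 - A$ to be connected after all, contradiction. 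That yields~(a).

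For (b) and (c) I would apply Claim~\ref{stab-compl} directly to $G'_1$ in the role of $C_i$, but one must check the hypothesis $|V' - C_1| \ge 4\epsilon n$, which need not hold. So I split into the two cases already set up before the claim. \textbf{Case $N' - c_1 \ge 4\epsilon n$:} then $N' - c_2 \ge 4\epsilon n$ as well (since $c_1 \ge c_2$), and Claim~\ref{stab-compl} applied to $i = 2$ says $G'_1$ has one nontrivial component with a part-complement structure; but we want the analogous statement for $G'_2 - A$. Here I would instead apply Claim~\ref{stab-compl} with $i = 1$ to conclude that $G'_2$ — hence $G'_2 - A$, after removing at most $a \le 3\gamma n$ vertices by part~(a) — has only one nontrivial component $D$ with $D \supseteq V' - V'_j$ for some $j$; removing $A$ can only delete vertices and cannot split a component that large because every pair of remaining vertices outside $V'_j$ still has a common neighbor in $V' - V'_j - A$ (which is nonempty and large, again by~\eqref{2e''} and $a$ small). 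That gives~(b), and it also shows all vertices of $V' - V'_j$ are in the one nontrivial component, so every isolated vertex of $G'_2 - A$ lies in $V'_j$, giving~(c). \textbf{Case $N' - c_1 < 4\epsilon n$:} then $C_1$ is almost everything, $B = V' - C_1$ is small, and I would argue directly: if $G'_2 - A$ had two nontrivial components $H_1, H_2$ with edges $u_1u_2 \in E(H_1)$, $v_1v_2 \in E(H_2)$, then mimic the argument in Claim~\ref{best-coloring} — since $s' \ge 3$, find a third part $V'_{j''}$ avoiding, say, the parts of $u_1$ and $v_1$, pick $w \in V'_{j''} \setminus A$ (nonempty since $|A \cap V'_{j''}|$ is at most $a \le 3\gamma n < n'_{j''}$), and note $w$ is adjacent in $G'_2 - A$ to both $u_1$ and $v_1$ unless those edges were recolored into $G'_1$; but $G'_1[C_1]$ has the Gallai–Edmonds structure with $G'_2[D'_i]$ empty, so the recoloring obstruction is controlled, and we reach $H_1 = H_2$, a contradiction. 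The same third-part vertex $w$ argument pins all isolated vertices of $G'_2 - A$ into a single part, giving~(c).

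The main obstacle I expect is part~(a): extracting a genuine contradiction from $a > 3\gamma n$ requires carefully combining the counting bound~\eqref{fn10} on $k'$ with the degree/adjacency condition~\eqref{2e''} to actually \emph{build} an oversized connected matching in $G'_2$, and one has to be careful about which edges survive the recoloring done in the setup (edges incident to $A$ or inside some $D'_i$ were moved out of $G'_2$) — so the matching must be assembled from edges each of which meets $V' \setminus (A \cup \bigcup_i D'_i)$ appropriately, or from cross-part edges not internal to any $D'_i$. Parts (b) and (c) are then mostly bookkeeping on top of Claim~\ref{stab-compl} and the $s' \ge 3$ third-vertex trick, analogous to Case~1--3 of Claim~\ref{compl} and the final paragraph of Claim~\ref{best-coloring}, with the only new subtlety being that one must account for the at-most-$3\gamma n$ vertices of $A$ that have been deleted, which is harmless precisely because part~(a) bounds $a$.
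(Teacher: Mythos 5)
There is a genuine gap, and it is located exactly where you predicted trouble: part~(a). Your ``cleanest route'' argues that $a>3\gamma n$ forces $G'_2-A$ to be connected because, by~\eqref{2e''}, any two vertices outside a common part have a common neighbor. But~\eqref{2e''} gives a common neighbor in the \emph{multipartite graph} $G'$, not in the colour class $G'_2$; those edges may well lie in $E_1$, so no connectivity of $G'_2-A$ follows. (Indeed $a$ can legitimately be as large as about $(1+\gamma)n$ when $G'_2-A$ \emph{is} connected --- see Lemma~\ref{big a} --- so any proof of~(a) must genuinely use the disconnectedness hypothesis.) The paper's mechanism, which is absent from your plan, is a dichotomy on the minimum degree of $G'_2-A$. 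If every vertex has $G'_2$-degree at least $2\epsilon n$ off $A$, then~\eqref{fn10} and $N'\le 4n-2$ give $N'/k'<5$, so the smallest component $D'_{k'}$ of $G'_1-A$ has fewer than $5$ vertices; a vertex $v\in D'_{k'}\cap V'_j$ then sends $G'_2$-edges to all but $\epsilon n+4$ vertices of $V'-V'_j-A$, and the degree condition sweeps in the rest, so $G'_2-A$ is connected, contradicting the hypothesis. Otherwise some $v\in V'_j$ has $G'_2$-degree below $2\epsilon n$, hence is adjacent \emph{in $G'_1$} to all but $3\epsilon n$ vertices of $V'-A-V'_j$; this collapses $G'_1-A$ to one huge component plus isolated vertices essentially confined to $V'_j$, giving $k'\le n_j+3\epsilon n$, and substituting into the Gallai--Edmonds count~\eqref{n10} together with $N-n_j\ge 2n-1$ yields $a\le 3\gamma n$. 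Your counting remarks about $k'$ being large never close this loop.

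Two further problems affect your~(b) and~(c). First, you assume the recolouring of Claim~\ref{best-coloring} (moving edges incident to $A$ or inside a $D'_i$ into colour~$1$) was performed, so that $G'_2[D'_i]$ is empty and the ``third-part vertex $w$'' trick applies; in the stability proof no edges are ever moved between colour classes, so this premise is false and $wu_1$ may simply be a $G'_1$-edge. Second, your argument that deleting the (at most $3\gamma n$) vertices of $A$ cannot disconnect the large component of $G'_2$ again invokes common neighbours in $G'$ rather than in $G'_2$; deleting $3\gamma n$ vertices can certainly disconnect a graph, and nothing you cite prevents it. The correct source of \emph{guaranteed} $G'_2$-common-neighbours is the collection of at least $k'-3\epsilon n-1$ isolated vertices of $G'_1-A$ lying in $V'_j$ (produced by the low-degree case above): any vertex outside $V'_j\cup A$ is $G'$-adjacent to all but $\epsilon n$ of them, and since they are isolated in $G'_1-A$ those edges are forced into $G'_2$. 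That places all of $V'-V'_j-A$ in one component of $G'_2-A$, which simultaneously gives~(b) and~(c).
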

\begin{proof} We consider two cases.

\medskip\noindent\textbf{Case 1:} $G'_2 - A$ has a vertex $v$ of degree less than $2\epsilon n$.
Suppose that $v \in V'_j$. Then $v$ is adjacent in $G'_1$ to all but at most $3\epsilon n$ vertices of $V' - A - V'_j$. Therefore $G'_1 - A$ consists of a large component containing $v$, at most $3\epsilon n$ components consisting of vertices outside $V_j'$ not adjacent to $v$, and at most $n'_j-1$ isolated vertices in $V_j'$. In particular, $k' \le n'_j + 3\epsilon n \le n_j + 3\epsilon n$.

By~\eqref{n10} and $\alpha'(G'_1) \le (1+\gamma)n$, we have $N' - k' + a - 4\epsilon n \le (2 + 2\gamma)n$, giving us an upper bound on $a$:
$$a	\le (2 + 2\gamma)n - N' + k'  + 4\epsilon n \le (2 + 2\gamma)n - (N - 4\epsilon n) + (n_j + 3\epsilon n) + 4 \epsilon n \le (2 + 2\gamma + 10 \epsilon)n - (N - n_j).$$
By~\eqref{suit-s2'}, we conclude that $a \le (2\gamma + 10\epsilon)n + 1 \le 3 \gamma n$, proving~(a).

Moreover, at least $k' - 3\epsilon n - 1$ components of $G'_1 - A$ are isolated vertices in $V'_j$; by~\eqref{fn10}, this number is much bigger than $2 \epsilon n$. Any vertex not in $V'_j \cup A$ is adjacent in $G'_2$ to more than half of these vertices; therefore, all vertices outside $V'_j$ are in the same component of $G'_2 - A$. We conclude that (b) and (c) hold.

\medskip\noindent\textbf{Case 2:} $G'_2 - A$ has minimum degree at least $2\epsilon n$. Similarly to Case 1, (a) holds. In this case, we will show that $G'_2 - A$ is connected. 
Since $N'\leq 4n - 2$, \eqref{fn10} implies
\[
	\frac{N'}{k'} \leq \frac{4n-2}{(1-3\gamma)n + a + 2} \leq \frac{4n}{(1-3\gamma)n} < 5.
\]
Therefore the average size of the components $D'_1, \dots, D'_{k'}$ is less than $5$; in particular, $D'_{k'}$, the smallest of these components, has fewer than $5$ vertices.

Pick $v \in D'_{k'}$ and let $j$ be such that $v \in V'_j$; let $Q$ be the connected component of $G'_2 - A$ containing $v$. Then $Q$ includes all but at most $\epsilon n + 4$ vertices of $V' - V'_j - A$: it can miss only at most $\epsilon n$ vertices not adjacent to $v$ in $G'$, as well as the other vertices of $D'_{k'}$.

By the case, each vertex of $V'_j - A$ has degree at least $2\epsilon n$ in $G'_2 - A$, which must include a vertex of $Q$; therefore $V'_j - A \subseteq Q$. Now $Q$ includes all but at most $\epsilon n + 4$ vertices of $G'_2 - A$. Then again by the case,  it must include all  its vertices.
\end{proof}
 
In the next three subsections, we will prove the following three lemmas that, together, complete the proof of Theorem~\ref{stability}.

\begin{lemma}\label{d1 lemma}
If $a \le (1-3\gamma)n - 1$, then $|D_1'| \ge N' - a - (1 + \gamma + 4\epsilon)n$.
\end{lemma}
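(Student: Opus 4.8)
The plan is to argue by contradiction: suppose $a \le (1-3\gamma)n - 1$ but $d_1 = |D_1'| < N' - a - (1 + \gamma + 4\epsilon)n$. Under this assumption I want to build a connected matching in $G'_2$ of size larger than $(1+\gamma)n$, contradicting $\alpha'_*(G_2) \le \alpha'_*(G_1), \alpha'_*(G_2) \le (1+\gamma)n$. The starting observation is that each $D_i'$ with $i \ge 1$ is either a factor-critical component of $G_1'[D]$ or a single vertex of $B$ (in the case $N'-c_1 \ge 4\epsilon n$) or a vertex of $C$; in every case, every vertex of $D_i'$ is isolated in $G_1' - A - D_i'$ when restricted to the other $D_j'$'s — more precisely, for $i \ne j$ there are no $G_1'$-edges between $D_i'$ and $D_j'$, and none inside $D_0'=C$. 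Since by Claim~\ref{big} and~\eqref{fn10} there are many parts $D_i'$ and their total size is $N'-a$, the bound $d_1 < N' - a - (1+\gamma+4\epsilon)n$ forces the pieces $D_2', \dots, D_{k'}'$ together to have size more than $(1+\gamma+4\epsilon)n$.

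The key step is to greedily construct a large matching in $G_2'-A$ using edges that go between different $D_i'$ blocks. Pick one vertex from each $D_i'$, $i \ge 1$ (a "representative"); since $s' \ge 3$ and every vertex is non-adjacent in $G_2'$ to at most $\epsilon n$ vertices in each other part (by~\eqref{2e''} and the construction of $G'$), any two representatives lying in different parts $V_j'$ are $G_2'$-adjacent, so in fact $G_2'$ restricted to the representatives is very dense and all of these representatives except at most $\epsilon n$ of them (those confined to a single part $V_j'$) lie in one component of $G_2'$. I would then match up representatives from $D_2', D_3', \dots$ pairwise inside $G_2'$, skipping $D_1'$, to produce a connected matching; because $D_2' \cup \dots \cup D_{k'}'$ has size exceeding $(1+\gamma+4\epsilon)n$ and at most $\epsilon n$ vertices are "stuck" in a bad part, the matching has more than $\tfrac12(1+\gamma+4\epsilon)n - O(\epsilon n)$ edges — but this alone is not enough. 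To get past $(1+\gamma)n$ edges I should instead match representatives of the small blocks ($D_2', \dots, D_{k'}'$, which are numerous by~\eqref{fn10}) and then, because each such block that is factor-critical can also contribute a near-perfect matching inside itself via $G_1'$... no — those are $G_1'$ edges. So the right move is: take a near-perfect matching $M_1$ inside $G_1'[C_1]$ restricted to the blocks $D_2', \dots, D_{k'}'$ (Theorem~\ref{GE} gives one covering all but $k'-1$ of the vertices of $D \setminus D_1'$, plus all of $C$), giving roughly $\tfrac12(N' - a - d_1 - k') $ edges — but $M_1$ is a matching of color $1$, which is irrelevant to $G_2'$.

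Rethinking, the cleanest route: I would build the $G_2'$-connected matching $R$ exactly as in the proof of Theorem~\ref{2conn} (Claim~\ref{r-matching} and the two cases following it), taking $I$ to be a large set of vertices that are "isolated in $G_1'-A$" within their block and using the maximality/augmentation operations to force $R$ large. The sizes are controlled by~\eqref{fn10}: there are more than $(1-3\gamma)n + a + 2$ blocks, so if $d_1$ is small, the number of blocks of size $\le$ (average) is enormous, and the set $I$ of one-vertex blocks together with the cross-block matching $R_0$ satisfies $|I| + |R_0| \ge k' - 1 \ge (1-3\gamma)n + a + 1$. Running the augmentation to maximality as in Claim~\ref{r-matching}'s setup keeps $|I| + |R|$ fixed; if at the end $|R| \le (1+\gamma)n$ then $|I| \ge k' - 1 - (1+\gamma)n$, and one shows $I$ together with the degree bound~\eqref{2e''} and~\eqref{suit-s2} forces either $a \ge$ (something contradicting $a \le (1-3\gamma)n-1$) or a matching saturating $A$ in $G_1'$ of size exceeding $(1+\gamma)n$; the arithmetic here is where $d_1 \ge N' - a - (1+\gamma+4\epsilon)n$ must drop out, since the only way $R$ fails to be huge is that almost everything outside $A$ is swallowed by a single block $D_1'$. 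I expect the main obstacle to be bookkeeping the $O(\epsilon n)$ error terms (vertices stuck in one part $V_j'$, the $\le \epsilon n$ non-neighbors, the $\le 5\epsilon n$ deleted vertices) carefully enough that the final inequality yields the clean constant $(1+\gamma+4\epsilon)n$ rather than a worse one — i.e., making sure the slack in~\eqref{fn10} and~\eqref{n10} is spent efficiently.
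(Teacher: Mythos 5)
Your proposal does not reach a proof, and the gap is at the exact point where the lemma's content lives. The hypothesis to be contradicted, $|D_1'| < N' - a - (1+\gamma+4\epsilon)n$, never concretely enters your argument: your first construction (pairing representatives of blocks) is abandoned as too small, your second is abandoned as using the wrong color, and your third — porting Claim~\ref{r-matching} with $|I|+|R_0|\ge k'-1$ and the augmentation process — produces bounds (\eqref{fn10}, the degree of an isolated vertex, $a\le(1-3\gamma)n-1$) that make no reference to $|D_1'|$ at all. If that route closed, it would yield a contradiction for \emph{every} suitable graph with small $a$, which is false: Lemma~\ref{small a} shows such graphs exist and are exactly the ones where $D_1'$ is huge. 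So the step you defer to ``the arithmetic here is where $d_1 \ge N'-a-(1+\gamma+4\epsilon)n$ must drop out'' is not bookkeeping of $O(\epsilon n)$ terms; it is the missing idea. The paper uses the smallness of $D_1'$ in a specific, load-bearing way (Claim~\ref{balanced-split}): because no single block exceeds $N'-a-(1+\gamma+4\epsilon)n$, the blocks $D_0',\dots,D_{k'}'$ can be greedily distributed into two sides $X,Y$ with no $G_1'$-edges between them and $\min\{|X|,|Y|\}>(1+\gamma+4\epsilon)n$; then Claim~\ref{disc2} applies K\"onig--Egerv\'ary to the bipartite graph $G_2'[X,Y]$ (all of whose $G'$-edges are forced into color $2$) to extract a connected matching of size $\ge(1+\gamma)n$ unless $(X,Y)$ is trapped in one or two parts $V_j'$, and those degenerate cases are ruled out separately (one forces $a>(1-\gamma-9\epsilon)n$, the other forces all blocks to be singletons and $G_2'-A$ connected).

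A secondary problem is that Claim~\ref{r-matching} does not transplant to the stability setting as readily as you assume. Its key step~\eqref{endpoint} (every edge of $R$ meets the part $V_i$ containing $u$ and $v$) uses that every cross-part pair is an edge of $G$ and that $v$ is genuinely isolated in $G_1-A$; in $G'$ each vertex may miss up to $\epsilon n$ cross-part neighbors, and the blocks $D_i'$ need not be singletons, so the set $I$ of isolated vertices you need may be empty or the replacement operations may fail on specific edges. Connectivity of the resulting matching also needs Claim~\ref{G2}, whose hypotheses you would have to verify. None of this is fatal in principle, but combined with the absence of any mechanism using the bound on $|D_1'|$, the proposal as written is a plan rather than a proof.
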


\begin{lemma}\label{small a}
If $a \le (1-3\gamma)n - 1$, then $G'$ has a $(16\gamma,1,1)$-bad partition.
\end{lemma}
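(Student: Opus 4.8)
The plan is to build the bad partition around the dominant Gallai--Edmonds piece $D_1'$. By Lemma~\ref{d1 lemma}, the hypothesis $a\le(1-3\gamma)n-1$ forces $|D_1'|\ge N'-a-(1+\gamma+4\epsilon)n$, so the leftover $R:=V'\setminus(A\cup D_1')$ has $|R|=N'-a-|D_1'|\le(1+\gamma+4\epsilon)n$. Two further size facts are immediate. In the main case $G_1'[D_1']$ is a connected factor-critical component of $G_1'[D]$ (Theorem~\ref{GE}(b)), so it has a near-perfect matching, which is connected, whence $\tfrac12(|D_1'|-1)\le\alpha'_*(G_1)\le(1+\gamma)n$ and $|D_1'|\le2(1+\gamma)n+1$. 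Moreover, combining $\alpha'_*(G_1)=\alpha'(G_1'[C_1])=\tfrac12\bigl(c_1-(k-a)\bigr)$ with Claim~\ref{big} (so $b\le n_1'/2<n$) and the trivial bound $|D|\ge|D_1'|+(k-1)$ forces, after a short computation, $a<4\gamma n$. So $D_1'$ is a large $G_1'$-block, $|D_1'|\in[(1-O(\gamma))n,\,2(1+\gamma)n+1]$, all of whose $G_1'$-edges that leave $D_1'$ end in the tiny set $A$, and $|R|\le(1+O(\gamma))n$.

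I would take $W_1\supseteq D_1'$ and $W_2:=$ a size-corrected version of $A\cup R$ together with the at most $5\epsilon n$ deleted vertices. If $N-|D_1'|$ already lies in the window demanded by (i), take $W_2=(A\cup R)\cup\widetilde V\cup(V_s\setminus\widetilde V)$ and $W_1=D_1'$; otherwise move a set $T\subseteq D_1'$ across the partition, choosing $T$ so that $G_1'[D_1'\setminus T]$ stays connected and $|E_{G_1'}(T,\,D_1'\setminus T)|$ is small (peel vertices off a spanning tree of $G_1'[D_1']$, which need not be dense). Condition (i) then follows from $|D_1'|\le2(1+\gamma)n+1$, from $N'-2(1+\gamma)n\le|A\cup R|\le a+(1+\gamma+4\epsilon)n$, and from \eqref{suit-s1'}, \eqref{suit-s2'}, \eqref{suit-s5} (this is also where one must verify that the window $[(1-16\gamma)n,\,(1+16\gamma)n_1]$ for $|W_2|$ is non-empty, using the constraints relating $n_1$, $N$ and the part sizes). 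Condition (ii) is then nearly free: $D_1'$ sends no $G_1'$-edge to $R$, so every $G_1$-edge between $W_1$ and $W_2$ is incident to $A$ (at most $a\cdot N<4\gamma n\cdot4n=16\gamma n^2$ of these, as $a<4\gamma n$), or incident to a deleted vertex (at most $5\epsilon n\cdot N$), or lies in the small cut $E_{G_1'}(T,\,D_1'\setminus T)$.

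The real work is condition (iii): $|E(G_2[W_1])|\le16\gamma n^2$, that is, since $a$ is tiny, bounding $|E(G_2'[D_1'])|$. I would argue by contradiction, assuming $|E(G_2'[D_1'\cup A])|>16\gamma n^2$ and extracting from it a connected $G_2$-matching of size more than $(1+\gamma)n$, contradicting the hypothesis. Two ingredients drive this: (1) $G_1'[D_1']$ lives only inside $D_1'$ and all its outgoing edges go to $A$, so by near-completeness almost every $G$-edge from $D_1'$ to $R$, and almost every cross-part pair inside $D_1'$ not taken by the (possibly sparse) $G_1'[D_1']$, is a $G_2'$-edge; (2) $D_1'$ is $G_1'$-connected, hence spans at least two parts, which lets the various $G_2'$-matchings---inside $D_1'$, between $D_1'$ and $R$, and inside $R$---be threaded into one connected matching. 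The bookkeeping is to check that the assumed excess of $G_2'$-edges inside $D_1'$, together with the roughly $n$ vertices of $R$ and the near-complete bipartite structure of $G_2'[D_1',R]$, push the matching past $(1+\gamma)n$; the constant $16\gamma$ is tuned for this.

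I expect this extension argument to be the main obstacle, in particular the sub-case where $D_1'$ and $R$ are concentrated in the same two parts, so that $G_2'[D_1',R]$ has (via K\"{o}nig--Egerv\'{a}ry) a small matching because of the large part-overlap. There one must work inside $D_1'$ alone, using factor-criticality to locate edges between its two dominant parts and counting the cross-part pairs not used by $G_1'[D_1']$; if even this fails, the concentration itself exhibits a $(16\gamma,1,1)$-bad partition with $W_2$ inside a single part. Two loose ends remain: when $G_2'-A$ is disconnected, Claim~\ref{G2}(b),(c) shows $G_2'-A$ has a unique nontrivial component with all isolated vertices in one part, which localizes $E(G_2[D_1'])$ and makes (iii) more direct; and when $D_1'=D_0'$ (the piece $C$, or $B\cup C$, rather than a factor-critical $D_j$), one uses that $G_1'[C]$ carries a perfect matching in place of the near-perfect matching of $D_j$, the argument being otherwise unchanged.
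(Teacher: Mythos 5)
Your skeleton matches the paper's: take $W_1$ essentially equal to $D_1'\cup A$ and $W_2=V'-W_1$, use Lemma~\ref{d1 lemma} plus \eqref{fn10} to get $a<4\gamma n$ and to place $|W_2|$ in the window $[(1-3\gamma)n,(1+2\gamma)n]$, and prove (iii) by contradiction by extracting a too-large connected matching in $G_2'$. (Your worry about having to move a set $T$ across the partition is unnecessary: $|W_2|\ge k'-1>(1-3\gamma)n$ by \eqref{fn10} and $|W_2|\le(1+\gamma+4\epsilon)n+1$ by Lemma~\ref{d1 lemma}, so no adjustment is needed; also your bound $a\cdot N$ in (ii) is too generous once you add the $5\epsilon n\cdot N$ term --- the paper uses $a\cdot|W_2|<5\gamma n^2$.)

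The genuine gap is in (iii), precisely at the point you flag as ``the main obstacle.'' After extracting a matching $M_1$ of size about $4\gamma n$ inside $W_1$ from the assumed $16\gamma n^2$ edges (Erd\H{o}s--Gallai), one must find a matching $M_2$ of size about $|W_2|\approx n$ in $G_2'$ between $W_2$ and $W_1-A-V(M_1)$; you correctly observe that near-completeness of $G'[W_1,W_2]$ fails when $W_1$ and $W_2$ concentrate in the same parts, but neither of your fallbacks closes this case. Working ``inside $D_1'$ alone'' cannot produce a connected matching of size $(1+\gamma)n$ from a mere $16\gamma n^2$ edges, since $|D_1'|\le 2(1+\gamma)n+1$ means you would need a near-perfect matching of $G_2'[D_1']$, which those edges do not supply; and the claim that ``the concentration itself exhibits a $(16\gamma,1,1)$-bad partition with $W_2$ inside a single part'' is unsubstantiated. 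The actual resolution is the paper's Claim~\ref{stab-disc}: a K\"onig--Egerv\'ary deficiency computation in $G_2'[W_2,\,W_1-R]$ in which, for $S$ contained in a single part $V_j'$, the suitability condition \eqref{suit-s2'} ($N-n_j\ge 2n-1$) guarantees at least $2n-1-5\epsilon n-|W_2-S|-|R|$ available neighbors outside $V_j'$, so the deficiency is at most $7\epsilon n$ regardless of how the parts overlap. Without this (or an equivalent use of the hypothesis that no part exceeds $N-(2n-1)$ vertices), the contradiction in (iii) does not go through.
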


\begin{lemma}\label{big a}
If $a \ge (1-3\gamma)n - 1$, then $G'$ has a $(68\gamma,2,1)$-bad or a $(35\gamma,2,2)$-bad partition.
\end{lemma}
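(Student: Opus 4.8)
The plan is to exploit the structure forced by a large set $A$ together with Claim~\ref{G2}. When $a \ge (1-3\gamma)n - 1$, by~\eqref{suit-s5} the part $V'_j$ containing the isolated vertices of $G'_2 - A$ cannot absorb all of $A$, and Theorem~\ref{GE}(c) says $A$ reaches at least $a+1$ components $D_1', \dots, D_{k'}'$ of $G'_1 - A$; meanwhile $\alpha'_*(G'_1) < (1+\gamma)n$ and $\alpha'(G'_1[C_1]) = \tfrac{N'-b-k+a}{2}$ pins down $k' \ge (1-3\gamma)n + a + 2$ via~\eqref{fn10}, so $k' + a$ is close to $N'$ and almost every $D_i'$ is a single vertex. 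I would first dispose of the easy sub-case where $G'_2 - A$ is connected: then $G'_2 - A$ is a single nontrivial component with $\alpha'_*(G'_2) = \alpha'(G'_2)$, and since $A$ is small relative to it but there is no matching of size $(1+\gamma)n$, König--Egerváry gives a vertex cover $Q$ of $G'_2 - A$ with $|Q| \le (1+\gamma)n$; the leftover independent set $V(G'_2-A) \setminus Q$ is large and, by the degree condition~\eqref{2e''}, lies almost entirely inside one part, which together with $A \cup Q$ and the rest yields a $(35\gamma,2,2)$-bad partition after the routine edge-counting. (Here I expect the main nuisance to be checking the size bounds (iii)--(v) of a $(\lambda,2,2)$-bad partition; (i)--(ii) follow because $Q$ covers $G'_2-A$ and $A$ is small.)

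The substantive case is when $G'_2 - A$ is disconnected, so Claim~\ref{G2} gives $a \le 3\gamma n$; combined with the hypothesis $a \ge (1-3\gamma)n - 1$ this is impossible unless $(1-3\gamma)n - 1 \le 3\gamma n$, i.e.\ $\gamma \ge \tfrac{n-1}{6n}$ — which fails under $n > 100/\gamma$. Hence in fact $G'_2 - A$ is connected whenever $a$ is this large, so the disconnected sub-case is vacuous and only the construction of the previous paragraph is needed. I would state this reduction explicitly at the start, since it is what makes the lemma tractable: the "$a$ large'' regime and the "$G'_2 - A$ disconnected'' regime are mutually exclusive given the numeric hypotheses $0 < \epsilon < 10^{-3}\gamma < 10^{-6}$ and $n > 100/\gamma$.

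So the real work is producing the $(35\gamma,2,2)$-bad partition from $A$, the vertex cover $Q$ of the unique nontrivial component of $G'_2-A$, and the large independent remainder $I := V(G'_2 - A)\setminus Q$. Set $V_j$ to be the part of the original (unreduced) graph $G$ that contains almost all of $I$ — guaranteed by~\eqref{2e''} and $|I| \gtrsim (1-\gamma)n$ — take $U_1$ to be (roughly) $A \cup (Q \cap I$'s complement structure$)$ and $U_2$ to be the rest, then verify: (ii) holds because $Q$ is a $G'_2$-cover so $G'_2$-edges out of $I$ are scarce, (i) holds because $I$ is $G'_2$-independent and $A$ is tiny, and (iii)--(v) are size bounds from $|I|, |Q|, |A|$ plus the $\le 5\epsilon n$ vertices lost in passing from $G$ to $G'$. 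The delicate point, and where I expect to lose the most constant-chasing, is ensuring the "off-part'' overflow of $I$ and the error terms $|\widetilde V|, |V_s - \widetilde V|, |V_3|$ all stay within the $35\gamma n^2$ budget for the edge conditions and the $(1\pm 35\gamma)n$ windows for the size conditions; this is exactly the kind of bookkeeping done in Claims~\ref{bip-small-r} and~\ref{bip-large-r}, and I would mirror that style, repeatedly invoking $\gamma > 1000\epsilon$ to swallow the $\epsilon$-terms.
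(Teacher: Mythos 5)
Your reduction at the start is fine as far as it goes: since $a \ge (1-3\gamma)n - 1 > 3\gamma n$, Claim~\ref{G2} does force $G'_2 - A$ to be connected, and the paper uses exactly this fact (inside its Case 2) to guarantee that the matchings it builds in $G'_2$ are connected. But the connectedness of $G'_2-A$ is not the dichotomy on which the lemma turns, and the single case you are left with rests on a step that fails: you apply K\"onig--Egerv\'ary to $G'_2 - A$ to convert $\alpha'(G'_2-A) < (1+\gamma)n$ into a vertex cover $Q$ of size at most $(1+\gamma)n$. In this part of the paper $s' \ge 3$ (the bipartite case was disposed of in Subsection~\ref{stab-bipartite-section}), so $G'_2 - A$ is not bipartite, and for non-bipartite graphs the minimum vertex cover can be as large as twice the maximum matching. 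Since $|V'-A|$ is roughly between $2n$ and $3n$, a cover bound of $2(1+\gamma)n$ is vacuous, and your large $G'_2$-independent remainder $I$ never materializes. (Your $I$ is also a different object from the paper's $I$, which is chosen independent in $G'_1$, one vertex per component of $G'_1-A-D_0$.)

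The paper's actual dichotomy is on $\alpha'\bigl(G'_2[A, V'-A]\bigr)$, the matching number of the genuinely bipartite graph between $A$ and its complement in color $2$. If this is at most $8\gamma n$, K\"onig--Egerv\'ary legitimately yields a cover $X$ with $|X|\le 8\gamma n$, and $(W_1,W_2)=(V'-(A-X),\,A-X)$ is a $(68\gamma,2,1)$-bad partition, with condition (iii) coming from the fact that $W_1$ is mostly the $G'_1$-independent set $I$. If instead there is a matching $X$ of size $8\gamma n$ across $(A,V'-A)$ in $G'_2$, one combines it with a maximum matching of $G'_2[I-V(X)]$ (connected, by your own observation via Claim~\ref{G2}) and Proposition~\ref{tri-min} to show that $I-V(X)$ is concentrated in a single part $V'_h$; after an augmenting-path adjustment of $X$ this yields the partition $(V_h, A-V_h, V(G)-A-V_h)$. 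Neither branch resembles your single-case argument, and without a valid substitute for the K\"onig--Egerv\'ary step your proof does not go through.
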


\subsection{Proof of Lemma~\ref{d1 lemma}}

We say that a \emph{$G_1'$-balanced split} of $V' - A$ is a partition $(X,Y)$ of $V' - A$ such that $G'_1[X,Y]$ has no edges, and $\min\{|X|, |Y|\} > (1 + \gamma + 4\epsilon)n$.

In this subsection, we will prove Lemma~\ref{d1 lemma} by attempting to construct a $G_1'$-balanced split by splitting $D'_0, D'_1, \dots, D'_{k'}$ between $X$ and $Y$. If this succeeds, we will use the $G_1'$-balanced split either to find a large connected matching, or to show that $A$ (the set of vertices not split between $X$ and $Y$) must be large. The only case in which we will fail to construct a $G_1'$-balanced split is when $D_1'$ is so large that we cannot make the split balanced.

\begin{claim}\label{balanced-split}
	If $a \le (1-3\gamma)n - 1$ and $|D_1'| < N' - a - (1 + \gamma + 4\epsilon)n$, then there exists a $G'_1$-balanced split of $V'-A$.
\end{claim}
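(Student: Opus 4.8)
The plan is to reduce the existence of a $G'_1$-balanced split to a purely numerical partition problem and then to solve that problem using the slack between $|V'-A|$ and $2(1+\gamma+4\epsilon)n$.

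First I would observe that the blocks $D'_0, D'_1, \dots, D'_{k'}$ are unions of connected components of $G'_1 - A$: for $i \ge 1$ each $D'_i$ is a single factor-critical component of $G'_1[D]$ or a single vertex of $B$, while $D'_0$ equals $C$ (or $B \cup C$), and $G'_1[C]$ itself decomposes into components, each of which has a perfect matching (by Theorem~\ref{GE}, a maximum matching of $G'_1[C_1]$ restricts to a perfect matching of $G'_1[C]$). In particular $G'_1$ has no edge joining two distinct blocks, and no edge joining two distinct components of $G'_1[C]$. Hence if I distribute these pieces — blocks, and if necessary the components of $G'_1[C]$ and the individual vertices of $B$ — into two classes $X$ and $Y$, then $G'_1[X,Y]$ is automatically edgeless, so a $G'_1$-balanced split exists as soon as the corresponding multiset of sizes can be split into two parts whose totals are both larger than $L := (1+\gamma+4\epsilon)n$.

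Next I would record the arithmetic. Write $T := |V'-A| = N' - a$. From $N' \ge 3n - 1 - 5\epsilon n$ and the hypothesis $a \le (1-3\gamma)n - 1$ we get $T \ge (2 + 3\gamma - 5\epsilon)n$, so $\delta := T - 2L \ge (\gamma - 13\epsilon)n > 0$ since $\gamma > 1000\epsilon$. Thus I want a sub-collection of pieces with total in the open window $(L, T - L)$, which has width exactly $\delta$. The hypothesis $|D_1'| < N' - a - (1 + \gamma + 4\epsilon)n = T - L$ bounds every block $D'_i$ with $i \ge 1$ below $T - L$; moreover, by~\eqref{n10}, $\sum_{i \ge 1}(|D'_i| - 1) = |D| - k \le 2\alpha'_*(G'_1) < 2(1+\gamma)n$, so only $O(1/\gamma)$ of the blocks $D'_i$ have size larger than $\delta$. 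If all the pieces one is distributing have size at most $\delta$, the problem is easy: the prefix sums $0, s_1, s_1+s_2, \dots, T$ move in steps of at most $\delta$, and the window $(L, T-L)$ has width $\delta$, so some prefix sum lands inside it. The work is in reducing to this case by first disposing of the $O(1/\gamma)$ pieces of size exceeding $\delta$ — via a ``largest-first, put it on the currently lighter side'' assignment, whose final imbalance the small pieces can then close; the estimates $T > 2L$ and (for the pieces $D'_i$) size $\le T - L$ are exactly what make this succeed.

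I expect the main obstacle to be the block $D'_0 = C$ (or $B \cup C$), whose size, unlike those of $D'_1,\dots,D'_{k'}$, is not controlled by the hypothesis and can be as large as roughly $2(1+\gamma)n$. If $|D_0'| \le T - L$ the scheme above goes through. If $|D_0'| > T - L$, then $\sum_{i \ge 1}|D'_i| < L$, so one side must receive almost all of $D'_0$ and the other almost none; here I would use that $B$ consists of isolated vertices of $G'_1$ and that $G'_1[C]$ breaks into components each with a perfect matching — so $C$ can be cut into strictly smaller pieces — and invoke the slack $\delta$ once more to move exactly enough of $C$ (together with the $B$-vertices, which provide granularity $1$) across the partition to land in $(L, T-L)$. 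Making this last transfer airtight requires controlling how finely $C$ may be cut, i.e., bounding the component sizes of $G'_1[C]$, or else arguing directly that the target window can always be hit; the rest is bookkeeping with $N' \ge 3n - 1 - 5\epsilon n$, $a \le (1-3\gamma)n - 1$, \eqref{fn10}, and $\gamma > 1000\epsilon$.
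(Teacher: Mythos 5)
Your overall strategy is the paper's: both arguments reduce the claim to distributing the blocks $D'_0,\dots,D'_{k'}$ (unions of components of $G'_1-A$, so that no distribution creates a $G'_1$-edge between the two classes) so that each class has size in the window $(L,\,T-L)$, where $L=(1+\gamma+4\epsilon)n$ and $T=N'-a$, and both exploit that the window has width $T-2L\ge(\gamma-13\epsilon)n$, which dominates the size of most blocks. Your prefix-sum argument for blocks of size at most $\delta$ and your count of at most $O(1/\gamma)$ larger blocks among $D'_1,\dots,D'_{k'}$ (via $\sum_{i\ge1}(|D'_i|-1)=|D|-k\le 2\alpha'(G'_1)$) are sound and in the same spirit as what the paper does. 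For comparison, the paper's execution is: if $D'_1$ alone lies in the window, take $X=D'_1$; otherwise run a greedy over $D'_1,\dots,D'_{k'}$ in decreasing order of size, always adding to the lighter side, stop when one side exceeds $L$, dump everything remaining on the lighter side, and then either derive a contradiction with $N'\ge 3n-1-5\epsilon n$ (when the block that triggered the stop has size at most $\gamma n/2$) or exhibit four disjoint chunks $D'_1$, $D'_2$, $D'_h$, $D'_{h'+1}\cup\cdots\cup D'_{k'}$, each of size at least $0.6n$, any two of which form one class of a balanced split.

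The genuine gap is the one you flag yourself: the block $D'_0$ ($=C$ or $B\cup C$). Its size is controlled only by $|C|\le 2\alpha'(G'_1)<2(1+\gamma)n$, so it can exceed $T-L$, and neither the hypothesis $|D'_1|<T-L$ nor your bound on the number of large blocks says anything about it. Your proposed repair --- cutting $C$ along the components of $G'_1[C]$ and transferring ``just enough'' across --- founders exactly where you say it does: nothing in the setup bounds the size of an individual component of $G'_1[C]$, and you do not supply the alternative direct argument, so the proof is not complete. It is worth noting that the paper's proof does not subdivide $C$ either: it keeps $D'_0$ out of the greedy loop entirely, treats it as one of the ``remaining sets'' thrown onto the lighter side at the end, and keys its whole case analysis to the blocks $D'_i$ with $i\ge1$, for which the hypothesis $|D'_1|<T-L$ supplies the needed upper bound. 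If you adopt that route you avoid having to cut $C$ at all, but you must then check that the loop over $D'_1,\dots,D'_{k'}$ actually reaches the stopping condition before $D'_0$ is placed --- the available lower bound $\sum_{i\ge1}|D'_i|\ge k'>(1-3\gamma)n+a$ from \eqref{fn10} does not by itself guarantee this when $|D'_0|$ is close to $2(1+\gamma)n$ --- so the dominant-$D'_0$ configuration still needs an explicit argument. Either way, as written your proposal stalls at the step you identify, and the preceding ``largest-first greedy whose imbalance the small pieces close'' step is also only gestured at rather than carried out.
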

\begin{proof}
First, suppose that $(1 + \gamma + 4\epsilon)n < |D'_1| < N' - a - (1 + \gamma + 4\epsilon)n$. In this case, we obtain a $G'_1$-balanced split by taking $X = D'_1$ and $Y = V' - A - D'_1$.

Second, suppose that $|D'_1| \le (1 + \gamma + 4\epsilon)n$. In this case, we construct $X$ and $Y$ step by step. Begin with $X = Y = \emptyset$. For $i=1, 2, \dots, k'$, if $|X| \le |Y|$, set $X = X \cup D'_i$; otherwise, set $Y = Y \cup D'_i$. We stop if $\max\{|X|,|Y|\}$ exceeds $(1+\gamma+4\epsilon)n$; when that happens, we put the remaining sets in the smaller of $X$ and $Y$.

Since 
\[
	N'-a \geq (3n-1-5\epsilon n) -((1-3\gamma)n  - 1) > 2 (1+\gamma+4\epsilon)n,
\]
the algorithm  stops sooner or later. Suppose it stopped after assigning $D'_h$ to $X$ or $Y$. If both $X$ and $Y$ are of size at least $(1+\gamma+4\epsilon)n $, then we have found a $G'_1$-balanced split. So, assume first that $D'_h\subset X$; the argument when $D'_h \subset Y$ is identical.
Then  $|X-D'_h| \le (1+\gamma+4\epsilon)n$ and $|Y| \le (1+\gamma+4\epsilon)n $, but $|X| > (1+\gamma+4\epsilon)n $. 

\medskip\noindent\textbf{Case 1:}
$|D'_h|\leq\frac{ \gamma n}{2}$. Then 

$$N'= |X-D'_h|+|D'_h|+|A|+|Y| < (1+\gamma+4\epsilon)n+ \gamma n/2+(1-3\gamma)n+(1+\gamma+4\epsilon)n $$
$$=(3-0.5\gamma+8\epsilon)n<(3-6\epsilon)n <N',$$
a contradiction.

\medskip\noindent\textbf{Case 2:}
$|D'_h|>\frac{ \gamma n}{2}$. Let $h'$ be the largest index such that $|D'_{h'}|>\frac{ \gamma n}{2}$.
By~\eqref{suit-s5} and the definition of $h'$, $4n>N'-a\geq h'\frac{ \gamma n}{2}$, so
\[
	h\leq h'<4n\cdot \frac{2}{ \gamma n}=\frac{8}{\gamma}<\frac{n}{3}.
\]
By~\eqref{fn10}, $k'\geq (1-3\gamma)n$, so $G'_1-A$ has at least $k'-h'\geq  (1-3\gamma)n-\frac{n}{3}>0.6n$ components of size at most  $\frac{ \gamma n}{2}$.

All these small components were added to $Y$ when the algorithm stops, and so prior to Step $h$, the size of $Y$ was at most $(1 + \gamma + 4\epsilon)n - 0.6n$; in order for $D'_h$ to have been added to $X$, we must have had $|X - D'_h| \le  (1 + \gamma + 4\epsilon)n - 0.6n$ as well, and $|D'_h| \ge 0.6n$.

Note, however, that $h \ge 3$, since in the first two steps we add $D'_1$ to $X$ and $D'_2$ to $Y$, and $|D'_2| \le |D'_1| \le (1 + \gamma + 4\epsilon)n$. In particular, the four sets $D'_1$, $D'_2$, $D'_h$, and $D'_{h'+1} \cup \dots \cup D'_{k'}$ each have size at least $0.6n$, so we obtain a $G'_1$-balanced split by taking $X$ to be the union of any two of them, and taking $Y = V' - A - X$.
\end{proof}

In any $G_1'$-balanced split, if there were many edges of $G'_2$ between $X$ and $Y$, then we would expect to find a large matching in $G'_2$ between them, which would be connected by Claim~\ref{G2}. However, we are assuming that $\alpha'_*(G_2') < (1 + \gamma)n$, so the structure of $G'$ must somehow prevent this. The following claim makes this precise:

\begin{claim}\label{disc2}
For any $G_1'$-balanced split $(X,Y)$, either
\begin{enumerate}
\item[(a)] there is $j \in [s']$ such that $|(X \cup Y) - V'_j| < (1+\gamma+4\epsilon) n$, or

\item[(b)] there are $j, j' \in [s']$ such that $X \cup Y \subseteq V'_j \cup V'_{j'}$ and $G'_2[X \cup Y]$ is disconnected.
\end{enumerate}
\end{claim}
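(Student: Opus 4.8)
The plan is to argue by contradiction: assume that (a) fails, so that for every $j \in [s']$ we have $|(X \cup Y) - V'_j| \ge (1+\gamma+4\epsilon)n$, and assume that (b) fails, so that whenever $X \cup Y$ is contained in the union of two parts $V'_j \cup V'_{j'}$, the graph $G'_2[X \cup Y]$ is connected. I will then show that $G'_2$ has a connected matching of size at least $(1+\gamma)n$, contradicting the standing hypothesis $\alpha'_*(G'_2) < (1+\gamma)n$. The idea is that, since $G'_1[X,Y]$ has no edges (by the definition of a $G'_1$-balanced split), every edge of $G$ between $X$ and $Y$ lies in $G'_2$, so $G'_2[X,Y]$ is a very dense bipartite graph; by~\eqref{2e''} each vertex of $X$ misses at most $\epsilon n$ vertices of any single part on the other side, hence misses at most a bounded number of vertices of $Y$ overall once we control how $Y$ is distributed among the parts.

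First I would split into the two structural cases according to how $X \cup Y$ meets the parts $V'_1, \dots, V'_{s'}$. If $X \cup Y$ meets at least three parts, then I claim $G'_2[X,Y]$ has a matching of size $\min\{|X|,|Y|\} - O(\epsilon n)$ that saturates the smaller side: greedily match vertices of the smaller side into the larger side; a vertex $v$ of the smaller side, say $v \in V'_j$, is non-adjacent in $G'_2$ only to at most $\epsilon n$ vertices of each part $V'_{j'}$ with $j' \ne j$ (these non-edges are the edges of $G$ absent because of degree loss, bounded via the definition of $\widetilde V$ and~\eqref{2e''}), plus to all of $V'_j$; but since $X \cup Y$ meets at least three parts, by the negation of (a) the part $V'_j$ cannot contain almost all of $X \cup Y$, so there are plenty of available neighbors on the other side throughout the greedy process. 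This yields a matching of size at least $\min\{|X|,|Y|\} - s'\epsilon n > (1+\gamma+4\epsilon)n - 5\epsilon n > (1+\gamma)n$. This matching lives inside one nontrivial component of $G'_2$: since $|V' - C_1| $ may or may not be large, I invoke Claim~\ref{G2}(b) (when $G'_2 - A$ is disconnected, its unique nontrivial component absorbs everything of interest) — but more directly, a matching this large in the dense bipartite graph $G'_2[X,Y]$ is automatically connected because any two of its edges have endpoints with a common neighbor across the split. Either way, $\alpha'_*(G'_2) \ge (1+\gamma)n$, a contradiction.

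In the remaining case, $X \cup Y$ is contained in the union of exactly two parts $V'_j \cup V'_{j'}$ (it cannot lie in a single part, since $G'_1[X,Y]$ has no edges while $X, Y$ are both nonempty, and a single part is independent so that would force one of $X, Y$ empty — actually both $X$ and $Y$ are large, so this is impossible anyway). By the negation of (b), $G'_2[X \cup Y]$ is connected. Now $G'_2[X,Y]$ is bipartite with each vertex adjacent to all but at most $\epsilon n$ vertices of the other side, restricted to the at most two parts; using $\min\{|X|,|Y|\} > (1+\gamma+4\epsilon)n$ I again build a greedy matching of size at least $\min\{|X|,|Y|\} - \epsilon n > (1+\gamma)n$ inside the connected graph $G'_2[X \cup Y]$, so it is a connected matching of the required size, contradicting $\alpha'_*(G'_2) < (1+\gamma)n$. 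Hence the assumed negations of (a) and (b) cannot both hold, proving the claim.

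The main obstacle I expect is bookkeeping the non-edges of $G'_2[X,Y]$ carefully: a vertex can fail to be adjacent to another either because the corresponding edge of $G$ is genuinely absent (both endpoints in the same part), or because of the degree deficiency that put a vertex into $\widetilde V$ — but we removed $\widetilde V$ to form $G'$, so on $V'$ the only relevant losses are the at-most-$\epsilon n$-per-part misses guaranteed by~\eqref{2e''}, together with the same-part non-edges. Keeping the greedy-matching argument honest — making sure that at every step there remains an unmatched available neighbor on the other side, which is exactly where the negation of (a) is used to prevent one part from swallowing almost all of $X \cup Y$ — is the only delicate point; the rest is routine counting with the slack provided by the $(1+\gamma+4\epsilon)n$ threshold versus the $(1+\gamma)n$ target.
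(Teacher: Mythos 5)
Your overall strategy coincides with the paper's: argue by contradiction, note that a $G'_1$-balanced split forces $G'_2[X,Y]$ to be a dense bipartite graph, and extract from it a connected matching of size $(1+\gamma)n$, with the negation of (a) supplying the neighborhood bounds and the negation of (b) supplying connectivity in the two-part case. Your connectivity shortcut via Claim~\ref{G2} is legitimate and arguably cleaner than the paper's direct argument: every edge of a matching in $G'_2[X\cup Y]=G'_2-A$ lies in a nontrivial component, and by Claim~\ref{G2}(b) there is at most one such component. (Your alternative justification --- ``any two edges have endpoints with a common neighbor across the split'' --- is not correct as stated, since two matched pairs can sit crosswise in the same two parts, but you do not need it.)

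The genuine problem is the quantitative claim that the greedy process yields a matching of size $\min\{|X|,|Y|\}-s'\epsilon n$ (resp.\ $\min\{|X|,|Y|\}-\epsilon n$ in the two-part case). This is false. If, say, $X\subseteq V'_j$ with $|X|=1.9n$ while $|Y-V'_j|=(1+\gamma+4\epsilon)n$, the negation of (a) is satisfied, yet every vertex of $X$ can only be matched into $Y-V'_j$, so the maximum matching has size about $(1+\gamma+4\epsilon)n$, far below $\min\{|X|,|Y|\}$. The negation of (a) prevents one part from swallowing $X\cup Y$, but not from swallowing one \emph{side} of the split, and that is enough to stall your greedy process long before it saturates the smaller side. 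What the negation of (a) actually gives is the K\"onig--Egerv\'ary deficiency bound: for $S\subseteq X$ contained in a single $V'_j$, one has $|N(S)|\ge |Y-V'_j|-\epsilon n\ge (1+\gamma+4\epsilon)n-|X-V'_j|-\epsilon n\ge |S|+(1+\gamma+3\epsilon)n-|X|$, while for $S$ meeting two parts $|N(S)|\ge |Y|-2\epsilon n$; this yields $\alpha'(G'_2[X,Y])\ge (1+\gamma)n$, which is all you need --- but it is not the bound you wrote, and your chain ``$\min\{|X|,|Y|\}-s'\epsilon n>(1+\gamma)n$'' derives the right conclusion from a false premise. The same repair is needed in your two-part case, where $G'_2[X,Y]$ only has edges between $X\cap V'_j$ and $Y\cap V'_{j'}$ and between $X\cap V'_{j'}$ and $Y\cap V'_j$, so the matching is bounded by $\min\{|X_1|,|Y_2|\}+\min\{|X_2|,|Y_1|\}$ rather than by $\min\{|X|,|Y|\}$; here one must check, as the paper does, that all four sums $|X_1|+|X_2|$, $|Y_1|+|Y_2|$, $|X_1|+|Y_1|$, $|X_2|+|Y_2|$ are at least $(1+\gamma)n$, the last two being exactly where the negation of (a) enters.
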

   
\begin{proof}
Suppose that there is a $G'_1$-balanced split $(X,Y)$ for which neither (a) nor (b) holds. In each of the cases below, we will find a connected matching of size $(1+\gamma)n$ in $G'_2$, contradicting our choice of $G$.

\medskip \noindent \textbf{Case 1:}
There is $j\in [s']$ such that $|X-V'_j|<4\epsilon n$ or $|Y-V'_j|<4\epsilon n$. For definiteness, suppose $|X-V'_j|<4\epsilon n$.  Then $|X\cap V'_j|\geq(1+\gamma) n $. Since (a) does not hold,  $|Y-V'_j|>(1+\gamma) n$. Let $F = G'[X \cap V'_j, Y - V'_j]$; since $G'_1[X,Y]$ has no edges, all edges of $F$ come from $G'_2$.

By the construction of $G'$, each vertex of $F$ is adjacent to all but at most $\epsilon n$ vertices in the other part of the bipartition of $F$. Therefore $F$ is connected and, moreover, a vertex cover in $F$ must include either all the vertices in one part, or all but $\epsilon n$ vertices in both parts. Then by Theorem~\ref{konig-egervary}, $F$ has a matching of size
\[
	\min\{|X \cap V'_j|, |Y - V'_j|, |X \cap V'_j| + |Y - V'_j| - 2\epsilon n\} \ge (1 + \gamma)n
\]
and $\alpha'_*(G'_2) \ge (1+\gamma)n$, a contradiction.

\medskip \noindent \textbf{Case 2:}
Case 1 does not hold and there are distinct $j_1,j_2,j_3\in [s']$ such that $X\cap V'_{j_h}\neq \emptyset$  for all $h\in [3]$, say $u_h\in X\cap V'_{j_h}$. 

 Suppose there are $j,j'\in [s']$ such that
 \begin{equation}\label{m15}
 |Y-(V'_j\cup V'_{j'})|<2\epsilon n.
 \end{equation} 
 Since Case 1 does not hold, we have $|Y\cap V'_j|>2\epsilon n$ and $|Y\cap V'_{j'}|>2\epsilon n$. 
 Thus~\eqref{m15} may hold for
 at most
  one pair of $j,j'\in [s']$.
  Hence for all but at most one  pair $(j'',j''')$, any vertices $v_1\in X\cap V'_{j''}$ and
$v_2\in X\cap V'_{j'''}$ have a common neighbor in $Y-(V'_{j''}\cup V'_{j'''})$. In particular, 
\begin{equation}\label{u13}
\mbox{\em the vertices $u_1,u_2$ and $u_3$ are in the same component
of $G'_2[X,Y]$.
}
\end{equation}

Let $F = G'[X, Y] =  G'_2[X,Y]$. By~\eqref{u13}, $F$ has a connected component containing $X$.
 Furthermore, since Case 1 does not hold, each $v \in Y$ has a neighbor in $X$. Thus $F$ is connected and it is enough to show that $\alpha'(F) \geq (1+\gamma) n$.

By Theorem~\ref{konig-egervary}, it is sufficient to prove that
\begin{equation}\label{jb}
\mbox{\em for every $S \subseteq X$, $\quad |N_F(S)| \geq |S|+(1+\gamma) n-|X|$.}
\end{equation}
Let  $\emptyset \neq S\subseteq X$. 
  If $S\subseteq V'_j$ for some $j\in [s']$, then
since (a) does not hold, 
\[
	|N_F(S)|\geq |X \cup Y -V'_j|-|X - S|-\epsilon n \geq (1+\gamma+4\epsilon)n-|X|+|S|-\epsilon n,
\]
and~\eqref{jb} holds. If $S$ intersects two distinct $V'_j$s, then
\[
	|N_F(S)|\geq |Y|-2\epsilon n\geq (1+\gamma+4\epsilon)n-2\epsilon n\geq
(1+\gamma+2\epsilon)n+(|S|-|X|),
\]
and again~\eqref{jb} holds.
  
\medskip \noindent \textbf{Case 3:}
Case 1 does not hold, and there are $j_1,j_2,k_1,k_2 \in [s']$ such that $X \subseteq V'_{j_1} \cup V'_{j_2}$ and $Y \subseteq V'_{k_1} \cup V'_{k_2}$. If $\{j_1,j_2\} \neq \{k_1, k_2\}$, then repeating the argument of Case 2 (with $(j_1, j_2)$ replacing $(j,j')$ and $(k_1,k_2)$ replacing $(j'',j''')$), we again find a connected matching of size at least $(1+\gamma)n$ in $G'_2$. So, suppose $X \cup Y \subseteq V'_{j_1} \cup V'_{j_2}$. Since (b) does not hold, $G'_2[X \cup Y]$ is connected. 

For $h \in [2]$, let $X_h = X \cap V'_{j_h}$ and $Y_h = Y \cap V'_{j_h}$. Since Case 1 does not hold, $|X_h| \ge 4\epsilon n$ and $|Y_h| \ge 4\epsilon n$ for all $h \in [2]$. We can repeat the application of Theorem~\ref{konig-egervary} in Case 1 to show that $G'_2[X_1, Y_2]$ has a matching of size $\min\{|X_1|, |Y_2|\}$ and $G'_2[X_2, Y_1]$ has a matching of size $\min\{|X_2|, |Y_1|\}$. Thus,
\[
	\alpha'_*(G_2[X, Y]) \geq \min\{|X_1| + |X_2|, |X_1| + |Y_1|, |Y_2| + |X_2|, |Y_2| + |Y_1|\}.
\]
We check that all four terms in this minimum are at least $(1+\gamma)n$. This is true for $|X_1| + |X_2| = |X|$ and $|Y_2| + |Y_1| = |Y|$ by the definition of a $G'_1$-balanced split; it is true for $|X_1| + |Y_1| = |(X \cup Y) - V'_{j_2}|$ and $|Y_2| + |X_2| = |(X \cup Y) - V'_{j_1}|$ because (a) is false.
\end{proof}

Finally, we put these two claims together to complete the proof of Lemma~\ref{d1 lemma}.

\begin{proof}[Proof of Lemma~\ref{d1 lemma}]
Assume  that $a \le (1-3\gamma)n - 1$ and $|D_1'| < N' - a - (1 + \gamma + 4\epsilon)n$. Then by Claim~\ref{balanced-split}, we obtain a $G'_1$-balanced split $(X,Y)$ of $V'-A$. Either condition (a) or (b) of Claim~\ref{disc2} must be true for $(X,Y)$.

By~\eqref{suit-s2'}, $|V'-V'_{j_1}|\geq 2n-1-5\epsilon n$. Thus if (a) holds, then
\[
	(2n-1-5\epsilon n)-a\leq |(V'-A)-V'_{j_1}|<(1+\gamma+4\epsilon) n,
\]
and $a> (1-\gamma-9\epsilon)n$, contradicting the condition $a \le (1-3\gamma)n - 1$.

So, suppose (b) holds, in particular, $G'-A$ is bipartite. Since every factor-critical graph is either a singleton or contains an odd cycle, each of $D'_1,\ldots,D'_{k'}$ is a singleton, and only $D_0$ may have more than one vertex. Recall that either $D_0=C$ or $b\leq 4\epsilon n$ and $D_0=B \cup C$.
 Since $G'_1[C]$ has a perfect matching, $C$ is a bipartite graph with equal parts.  So, $|C|\leq 2(1+\gamma)n-a$ and
 $|V'_{j_1}\cap C|=|V'_{j_2}\cap C|\leq (1+\gamma)n-a/2$. By~\eqref{suit-s2'}, for $h\in [2]$,
 
$$|V'_{j_h}-C-A-B| \geq (N'-n'_{j_{3-h}})-|V'_{j_h}\cap C|-a-b \geq 2n-1-((1+\gamma)n-\frac{a}{2})-a-4\epsilon n $$
$$\geq (\frac{1}{2}-\frac{5}{2}\gamma-4\epsilon)n-1 >(\frac{1}{2}-3\gamma)n.$$

Recall that all components of  $G'_1-A-C$ are singletons. This means that for $h\in [2]$, each vertex in $V'_{j_h}-A$
 is adjacent to all but $\epsilon n$ vertices in the set $V'_{j_{3-h}}-C-A-B$ of size at least
$ (\frac{1}{2}-3\gamma)n$. But then $G'_2-A$ is connected, and so does not satisfy~(b).
\end{proof}

\subsection{Small $a$: proof of Lemma~\ref{small a}}

We begin with a general claim about matchings in $G'$.

\begin{claim}\label{stab-disc}
Let $(X,Y)$ be a partition of $V'$ with $0<|X|\leq |Y|$. Write $|X|$ in the form $|X|=n-r$, where $\frac{2n-N'}{2}\leq r\leq n-1$. Then for every $R\subset Y$ with $|R|\leq\min\{r,2r\}+n-1$  such that $G'_1[X, Y-R]$
 has no edges,  the graph $G'_2[X,Y-R]$ has a matching of size at least $|X|-7\epsilon n$.
\end{claim}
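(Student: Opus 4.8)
\textbf{Proof proposal for Claim~\ref{stab-disc}.}

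The plan is to apply the K\"onig--Egerv\'ary theorem (Theorem~\ref{konig-egervary}) to the bipartite graph $H := G'_2[X, Y-R]$, whose bipartition is $(X, Y-R)$. Since $G'_1[X, Y-R]$ has no edges, every edge of $G'$ between $X$ and $Y-R$ lies in $G'_2$, so $H$ carries \emph{all} edges of $G'$ between those two sets. We want to show $\alpha'(H) \ge |X| - 7\epsilon n$, i.e., by Theorem~\ref{konig-egervary}, that for every $X_1 \subseteq X$ we have $|X| - |X_1| + |N_H(X_1)| \ge |X| - 7\epsilon n$, equivalently $|N_H(X_1)| \ge |X_1| - 7\epsilon n$. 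So I fix $\emptyset \neq X_1 \subseteq X$ and aim to lower-bound $|N_H(X_1)|$.

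First I would estimate $|Y - R|$ from below. By hypothesis $|X| = n - r$ and $|R| \le \min\{r, 2r\} + n - 1 = r + n - 1$ (since $\min\{r,2r\} \le r$ always, the bound is just $r + n - 1$; the $2r$ form is relevant only to record that when $r < 0$ the bound is $2r + n - 1$, which is what one actually needs). Then $|Y - R| \ge |Y| - |R| = (N' - |X|) - |R| \ge (N' - n + r) - (r + n - 1) = N' - 2n + 1$, and when $r \le 0$ one uses $|R| \le 2r + n - 1$ to get $|Y - R| \ge (N' - n + r) - (2r + n - 1) = N' - 2n + 1 - r \ge N' - 2n + 1$ as well. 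In either case $|Y - R| \ge N' - 2n + 1 \ge n - 5\epsilon n$ by \eqref{suit-s5} (since $N' > N - 5\epsilon n \ge 3n - 1 - 5\epsilon n$), so $Y - R$ is a large set — comparable to a full part.

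Now the key geometric input is \eqref{2e''}: every vertex is adjacent in $G'$ to more than half of every other part $V'_{j'}$. I split into cases on $X_1$. If $X_1$ meets two distinct parts $V'_j$ and $V'_{j'}$, then for any vertex $w \in Y - R$, $w$ lies outside at least one of these two parts, say $V'_j$, and then $w$ is adjacent in $G'$ to more than half of $X_1 \cap V'_j$, which is nonempty; but this shows $w$ has a neighbor in $X_1$ only after also knowing the edge is in $G'_2$ rather than $G'_1$ — which is automatic here since $G'_1[X, Y-R]$ has no edges. Wait: more carefully, I want $N_H(X_1) = Y - R$ in this case, so $|N_H(X_1)| = |Y - R| \ge n - 5\epsilon n \ge |X| - 5\epsilon n \ge |X_1| - 7\epsilon n$. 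If instead $X_1 \subseteq V'_j$ for a single $j$, then every vertex of $(Y - R) - V'_j$ is adjacent in $G'$ to more than half of $V'_j \supseteq X_1$; I would like to conclude each such vertex has a neighbor in $X_1$, which needs $|X_1| > \epsilon n$ roughly — here one uses the construction of $G'$ more precisely: each vertex of $Y-R$ outside $V'_j$ is non-adjacent in $G'$ to at most $\epsilon n$ vertices of $V'_j$, hence has a neighbor in $X_1$ as long as $|X_1| > \epsilon n$; if $|X_1| \le \epsilon n$ the bound $|N_H(X_1)| \ge |X_1| - 7\epsilon n$ is trivial. So when $|X_1| > \epsilon n$, $N_H(X_1) \supseteq (Y - R) - V'_j$, and $|(Y-R) - V'_j| \ge |Y - R| - n'_j \ge (N' - 2n + 1) - (2n - 1) \ge N' - 4n + 2 \ge -5\epsilon n + 2$... which is not obviously enough. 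The real bound must come from $|Y - R| - |Y - R) \cap V'_j|$ together with $|(Y-R)\cap V'_j| \le n'_j - |X_1|$-type accounting, or from noting $(N' - n'_j) - 2n + 1$ is large via \eqref{suit-s2'}. I would use \eqref{suit-s2'}: $N' - n'_j \ge N - n_j - 5\epsilon n \ge 2n - 1 - 5\epsilon n$, so $|(Y-R) - V'_j| \ge |Y-R| - |V'_j \cap Y| \ge (N' - n'_j) - |X| \cdot 0$ — this needs care; the clean route is $|(Y-R)| - |(Y-R) \cap V'_j| \ge (N'-2n+1) - (n'_j - |X_1|)$ is wrong because $X_1 \subseteq X$, not $Y$. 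Let me instead bound directly: $N_H(X_1) \supseteq (Y-R) \setminus V'_j$, and $|(Y-R)\setminus V'_j| = |Y-R| - |(Y-R)\cap V'_j| \ge (N' - 2n + 1) - (n'_j - |X|) $ is again not right. The correct accounting: $|Y - R| + |X| \le N'$, and $|(Y-R)\cap V'_j| + |X_1| \le |V'_j| = n'_j$ only if $X_1 \subseteq V'_j$, which holds; so $|(Y-R)\setminus V'_j| = |Y-R| - |(Y-R)\cap V'_j| \ge |Y-R| - (n'_j - |X_1|) \ge (N' - 2n + 1) - n'_j + |X_1| \ge (2n-1-5\epsilon n) - 2n + 1 + |X_1| = |X_1| - 5\epsilon n$ using $N' - n'_j \ge 2n - 1 - 5\epsilon n$, hmm but I wrote $N' - 2n + 1 - n'_j \ge (N' - n'_j) - 2n + 1 \ge (2n - 1 - 5\epsilon n) - 2n + 1 = -5\epsilon n$, giving $|(Y-R)\setminus V'_j| \ge |X_1| - 5\epsilon n$. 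Good — that is $\ge |X_1| - 7\epsilon n$.

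\textbf{Main obstacle.} The delicate point is the single-part case: I must be careful that the bound $|R| \le \min\{r,2r\} + n - 1$ — in particular the $\min$ with $2r$ when $r<0$, and the lower bound $r \ge -(N'-2n)/2$ — is exactly what is needed to keep $|Y - R|$ (and then $|Y - R| - n'_j$) above $|X_1|$ up to an $O(\epsilon n)$ error, in \emph{all} ranges of $r$. I expect that tracking these inequalities carefully across the sign of $r$, and correctly invoking \eqref{suit-s2'} to control $N' - n'_j$, is the real content; the bipartite-expansion argument via \eqref{2e''} and the $\epsilon n$-almost-completeness of $G'$ is then routine, and the factor $7\epsilon n$ is deliberately loose to absorb the accumulated $5\epsilon n$ terms plus the $N' > N - 5\epsilon n$ slack.
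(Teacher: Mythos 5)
Your proposal follows the paper's proof almost step for step: reduce to Theorem~\ref{konig-egervary} for $H=G'_2[X,Y-R]$, split on whether $X_1\subseteq X$ lies in a single part or meets two parts, and use the two estimates $|Y-R|=N'-|X|-|R|\ge |X|-5\epsilon n$ (via $r-\min\{r,2r\}=\max\{0,-r\}$) and $N'-n'_j\ge 2n-1-5\epsilon n$ (via \eqref{suit-s2}). Your single-part case is correct, including the observation that the ``every $w$ has a neighbour in $X_1$'' direction requires $|X_1|>\epsilon n$ and that smaller $X_1$ is trivial.

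The two-part case, however, contains a genuine error. You assert $N_H(X_1)=Y-R$ on the grounds that each $w\in Y-R$ lies outside one of the two parts, say $V'_j$, and is therefore ``adjacent to more than half of $X_1\cap V'_j$.'' That does not follow from \eqref{2e''}: having at most $\epsilon n$ non-neighbours in $V'_j$ says nothing about a small subset $X_1\cap V'_j$, which may be a single vertex; and here the fallback ``$|X_1|\le\epsilon n$ is trivial'' is unavailable, because $X_1$ can be large while $X_1\cap V'_j$ is tiny. The repair is to argue in the opposite direction, as the paper does: fix $v\in X_1\cap V'_j$ and $w'\in X_1\setminus V'_j$; then $N_H(v)$ misses at most $\epsilon n$ vertices of $(Y-R)\setminus V'_j$ and $N_H(w')$ misses at most $\epsilon n$ vertices of $(Y-R)\cap V'_j$, so $|N_H(X_1)|\ge |Y-R|-2\epsilon n\ge |X_1|-7\epsilon n$, which is exactly why the statement carries a $7\epsilon n$ rather than a $5\epsilon n$ error term. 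A second, smaller slip in the same case: the chain $|Y-R|\ge n-5\epsilon n\ge |X|-5\epsilon n$ fails when $r<0$ (then $|X|=n-r>n$); you must retain the sharper bound $|Y-R|\ge N'-2n+1-r$ that you derived and then discarded.
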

\begin{proof}
Let $H = G'_2[X, Y-R]$. By Theorem~\ref{konig-egervary},  it is enough to show that  for every $S\subseteq X$,  
\begin{equation}\label{6e'}
|N_{H}(S)|\geq |S|-7\epsilon n.
\end{equation}

\medskip\noindent\textbf{Case 1:}
$S$ intersects at least two distinct parts of $G'$, say contains vertices $v \in V'_j$ and $w \notin V'_j$ for some $j$.
Then $N_{H}(v)$ contains all but $\epsilon n$ vertices in $(Y-R)-V'_j$, and $N_{H}(w)$ contains all but $\epsilon n$ vertices in $(Y-R)\cap V'_j$. So $|(Y-R)-N_{H}(S)|<2\epsilon n$. But
$$|Y - R| = N' - |X| - |R| \ge (3n-1 - 5\epsilon n) - (n - r) - (\min\{r,2r\}+n-1) = n - 5 \epsilon n + r - \min \{r, 2r\} $$
$$\ge n - r - 5\epsilon n = |Y| - 5\epsilon n \ge |S| - 5\epsilon n,$$
and~\eqref{6e'} holds.

\medskip\noindent\textbf{Case 2:}
$S \subseteq V'_j$ for some $j$. Since $N - n_j \ge 2n-1$ and $N' > N - 5 \epsilon n$, we have $N'-|V'_j| \geq N - 5 \epsilon n - |V_j| \ge 2n-1-5\epsilon n$, and at most $|X-S|$ vertices of $X$ are in $V'-V_j$. So, $Y-R$ has at least $2n-1-5\epsilon n-|X-S|-|R|$ vertices in $V'-V_j$. Let $v\in S$. Since $v$ has at most $\epsilon n$ non-neighbors in $V'-V_j$,
$$|N_H(v)| \geq (2n-1)-5\epsilon n-|X-S|-\epsilon n - |R| \ge |S|- 6\epsilon n + r - \min\{r, 2r\} \ge |S| - 6\epsilon n$$
and~\eqref{6e'} holds.
\end{proof}


\begin{proof}[Proof of Lemma~\ref{small a}]
We assume $a \le (1-3\gamma)n - 1$. By Lemma~\ref{d1 lemma}, $|D'_1| \ge N' - a - (1+\gamma+4\epsilon)n - 1$. 

Since $k'\leq N'-|D'_1|+1$, in our case $k'\leq (1+\gamma+4\epsilon)n +1+1$. This together with~\eqref{fn10} yields
$$ a \leq 2(1+\gamma+2\epsilon)n-N'+k' \leq 2(1+\gamma+2\epsilon)n-3n+1+5\epsilon n+(1+\gamma+4\epsilon)n +2 $$
\begin{equation}\label{m152}
\leq (3\gamma+13\epsilon)n+5 <4\gamma n.
\end{equation}

Let $W_1 = D'_1 \cup A$ and $W_2 = V' - W_1$. We show $(W_1,W_2)$ is a $(16\gamma,1,1)$-bad partition for~$G'$.

\medskip\noindent\textbf{(i):} By~\eqref{fn10}, $|W_2|\geq k'-1>(1-3\gamma)n$. On the other hand, by Lemma~\ref{d1 lemma},
\[
	|W_2|=N'-|D'_1|-a\leq (1+\gamma+4\epsilon)n +1< (1+2\gamma)n.
\]
\medskip\noindent\textbf{(ii):}
Since $D'_1$ has no neighbors in $W_2$ in $G'_1$, \eqref{m152}  yields
 $$|E_{G'_1}[W_1,W_2]|\leq a|W_2|\leq (4\gamma n)|W_2|\leq (4\gamma n) (1+2\gamma)n<5\gamma n^2.$$
\medskip\noindent\textbf{(iii):}
Suppose $\alpha'(G'_2[W_1]) > (4 \gamma+7\epsilon) n$. Let $M_1$ be a matching in $G'_2[W_1]$ with $|M_1|=(4 \gamma+7\epsilon) n$ and let $R=A \cup V(M_1)$. Since $a\leq 4\gamma n$,  $|R|\leq (12\gamma+14\epsilon) n$.

We apply Claim~\ref{stab-disc} with $X = W_2$, $Y = W_1$, and $r=3\gamma n$ (using~\eqref{fn10}).
Since $|R|\leq (12\gamma+14\epsilon) n\leq n-1+r$, graph $G'_2[W_1,W_2] - R$ has a matching $M_2$ of size 
$|W_2|-7\epsilon n\geq k'-1-7\epsilon n$. 

By~\eqref{fn10}, the matching $M_1 \cup M_2$ has size
\[
	|M_1|+|M_2| > (k'-1-7\epsilon n)+(4 \gamma+7\epsilon) n\geq (1-3\gamma)n+4\gamma n=(1+\gamma)n,
\]
and by Claim~\ref{G2}, it is connected, a contradiction. So, $\alpha'(G'_2[W_1]) \le (4 \gamma+7\epsilon) n $.

Hence, by the Erd\H os--Gallai Theorem and~\eqref{suit-s5}, 
$$|E(G'_2[W_1])|\leq (4\gamma+7\epsilon) n |W_1|<16 \gamma n^2.$$
This  completes the proof, since conditions (i)--(iii) of a $(16\gamma,1,1)$-bad partition hold for $(W_1,W_2)$.
\end{proof}

\subsection{Big $a$: proof of Lemma~\ref{big a}}

We start from the following general claim about matchings in $s$-partite graphs.

\begin{prop}\label{tri-min} Let $s\geq 2$ and  $k_1, k_2, \ldots, k_s$ be positive integers.
Let $S=k_1+\ldots+k_s$ and $m=\max\{k_1, k_2, \ldots, k_s\}$.
Let $H$ be obtained from a complete $s$-partite graph $K_{k_1,k_2,\ldots,k_s}$   by deleting some edges in such a way that each vertex loses less than $\epsilon n$ neighbors. 
 Then
 \begin{equation}\label{g(H)}
\alpha'(H) \geq g(H):= \min\{\left\lfloor S/2 \right\rfloor, S-m\}-\epsilon n.
\end{equation}
\end{prop}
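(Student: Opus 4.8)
The plan is to reduce to the underlying complete multipartite graph $K_{k_1,\dots,k_s}$ and show that deleting few edges per vertex costs at most $\epsilon n$ in the matching number. First I would observe that $K_{k_1,\dots,k_s}$ itself has matching number exactly $\min\{\lfloor S/2\rfloor, S-m\}$: this is a classical fact (a complete multipartite graph has a matching covering all but $\max\{0, m - (S-m)\}$ vertices, by a greedy/defect-Tutte argument or directly by the K\"onig--Egerv\'ary / Gallai--Edmonds machinery already invoked in the paper). The size of this matching is $t := \min\{\lfloor S/2\rfloor, S-m\}$, so $K_{k_1,\dots,k_s}$ contains a matching $M^*$ with $|M^*| = t$.

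Next, I would build a matching in $H$ of size at least $t - \epsilon n$ by processing the edges of $M^*$ one at a time and using a greedy substitution whenever an edge of $M^*$ is missing in $H$. Concretely: order the edges of $M^*$, and try to include each one in our matching $M$. If an edge $uv \in M^*$ is present in $H$ and both endpoints are still free, add it. If $uv\notin E(H)$ (i.e.\ it was deleted), then since $u$ and $v$ lie in different parts and each has lost fewer than $\epsilon n$ neighbors, $u$ still has a neighbor in $v$'s part and $v$ still has a neighbor in $u$'s part; more robustly, I would instead argue via K\"onig--Egerv\'ary-type / Gallai--Edmonds deficiency that $\alpha'(H)\ge \alpha'(K_{k_1,\dots,k_s}) - (\text{number of "bad" edges encountered})$, and bound the number of bad edges. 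The cleanest route is probably the following: apply the Gallai--Edmonds / defect formula to $H$. If $\alpha'(H) < t - \epsilon n$, then more than $2\epsilon n$ vertices are uncovered by a maximum matching of $H$; take the Gallai--Edmonds decomposition $(A,C,D)$ of $H$, so the deficiency is $k - |A| > 2\epsilon n$ where $k$ is the number of components of $H[D]$. Since each $D_i$ is factor-critical it lies essentially inside the non-neighborhoods — but a factor-critical component of size $\ge 2$ has all its vertices pairwise within $\epsilon n$-non-neighborhoods of each other, which is restrictive; and the many small components of $H[D]$ together with the structure of $A$ force a contradiction with the fact that in $K_{k_1,\dots,k_s}$ one can match all but $\max\{0,2m-S\}$ vertices. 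I would push this contradiction through by counting: the uncovered vertices of $H$ would have to span a set whose removal from $K_{k_1,\dots,k_s}$ still leaves it with small matching number, but deleting $<\epsilon n$ edges at each vertex cannot destroy enough of the complete-multipartite structure to do that unless fewer than $2\epsilon n$ vertices are involved.

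Rather than the deficiency argument, the more elementary and self-contained route — and the one I would write up — is the direct greedy one. Build $M$ as follows: maintain disjoint "free-vertex" sets $F_1\subseteq V_1,\dots,F_s\subseteq V_s$, initialized to the full parts. Repeatedly pick two distinct indices $p\ne q$ with $|F_p|$ maximum and $|F_q|$ the next largest (so that we never strand vertices in one part), pick any $u\in F_p$, and look for a neighbor $v\in F_q$ of $u$ in $H$; since $u$ lost fewer than $\epsilon n$ neighbors, such $v$ exists as long as $|F_q| \ge \epsilon n$. Add $uv$ to $M$, remove $u,v$ from their sets, and continue. This process runs exactly like the greedy matching algorithm on $K_{k_1,\dots,k_s}$ except that it halts once the second-largest $F_q$ has dropped below $\epsilon n$; a short bookkeeping argument (comparing to the run on the complete multipartite graph, where the greedy "largest-two-parts" rule is known to achieve the optimum $t$) shows we lose at most $\epsilon n$ edges, i.e.\ $|M|\ge t-\epsilon n = g(H)$.

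The main obstacle I anticipate is making the "we lose at most $\epsilon n$" step fully rigorous: one must argue that the greedy largest-two-parts strategy on $K_{k_1,\dots,k_s}$ is optimal (standard but needs a clean statement — it keeps the part sizes as balanced as possible, so the only vertices ever left over are the surplus $\max\{0, 2m-S\}$ vertices of the largest part), and then that truncating this process $\epsilon n$ steps early — which is all the edge-deletions can force — costs only $\epsilon n$ edges. Handling the boundary between the $\lfloor S/2\rfloor$ regime (roughly balanced parts) and the $S-m$ regime (one dominant part) cleanly in this counting is the fiddly part; everything else is routine.
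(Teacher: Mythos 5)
There is a genuine gap, and it sits exactly where you flagged it: the claim that the truncated greedy ``loses at most $\epsilon n$ edges'' is false for $s\ge 3$. Your procedure is only guaranteed to continue while the \emph{second-largest} free set $F_q$ has size at least $\epsilon n$; once it drops below $\epsilon n$, every non-largest free set has size below $\epsilon n$, and the ideal run on $K_{k_1,\dots,k_s}$ could still perform up to $\sum_{i\ne p}|F_i|\le (s-1)\epsilon n$ further steps. So the bookkeeping gives a loss of $(s-1)\epsilon n$, not $\epsilon n$, and this is tight for the procedure as described. Concretely, take $s$ parts each of size $\epsilon n-1$ and, for each vertex of part $i$, delete all of its edges to part $i+1$ (indices mod $s$); each vertex loses fewer than $\epsilon n$ neighbors, yet your greedy may pick $u\in F_1$, find no neighbor in $F_2$, and halt with the empty matching, while $g(H)=\lfloor s(\epsilon n-1)/2\rfloor-\epsilon n$ is already positive for $s=3$ and grows linearly in $s$. (Your argument does work for $s=2$, where ``second largest'' and ``total outside the largest'' coincide.) The fallback Gallai--Edmonds deficiency sketch is not developed enough to count as a proof: the step ``the many small components of $H[D]$ force a contradiction by counting'' is precisely the content that needs to be supplied, and it is not routine, because components of $H[D]$ need not lie inside single parts.

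The paper's proof avoids all of this with a short induction on $|V(H)|$ (minimal counterexample): if $S>2\epsilon n$ then either $k_1>\epsilon n$ or $S-k_1>\epsilon n$, and in either case some vertex retains a neighbor across the largest part, so there is an edge $xy$ joining $Z_1$ to $V(H)-Z_1$; one checks that $g(H-x-y)\ge g(H)-1$ (the only delicate case being when $m$ does not drop, which forces $S\ge 2m+1$ and hence $\min\{\lfloor S/2\rfloor,S-m\}=\lfloor S/2\rfloor$), and the base case $S\le 2\epsilon n$ is vacuous since then $g(H)\le 0$. The point is that the entire $-\epsilon n$ slack is spent once, on the trivial base case, rather than being amortized over the run of a greedy algorithm --- which, as your own ``fiddly part'' worry suggests, cannot be done with the largest-two-parts rule. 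I recommend replacing the greedy with this one-edge-at-a-time induction, or at minimum repairing the greedy so that when $u$ fails you retry with other vertices of $F_p$ and other parts, and then proving a genuinely different loss bound.
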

\begin{proof}  Let $H$ be a vertex-minimal counterexample to the claim. If $S\leq 2\epsilon n$, then $\frac{S}{2}-\epsilon n\leq 0$, and~\eqref{g(H)} holds trivially, so $S> 2\epsilon n$.
Let the parts of $H$ be $Z_1,\ldots, Z_s$ with $ |Z_i|=k_i$ for $i\in [s]$. Suppose $m=k_1$. Since $S> 2\epsilon n$, either $k_1>\epsilon n$ or $S-k_1>\epsilon n$. In both cases, $H$
has an edge $xy$ connecting $Z_1$ with $V(H)-Z_1$. Let $H'=H-x-y$. 

We claim that $g(H')\geq g(H)-1$. Indeed, $\lfloor \frac{S}{2} \rfloor$ decreases by exactly $1$, and if $S-m$ decreases by $2$, then 
$m$ does not change, which means $k_2=k_1$ and neither $x$ nor $y$ is in $Z_2$. But in this case, since $|\{x,y\}\cap Z_1|=1$,
$S\geq 2m+1$, which yields $S-m\geq \lfloor \frac{S}{2} \rfloor+1=\min\{\lfloor \frac{S}{2} \rfloor, S-m\}+1$, and hence $g(H')\geq g(H)-1$.

So, by the minimality of $H$, $\alpha'(H') \geq g(H')\geq g(H)-1$. Adding edge $xy$ to a maximum matching in $H'$, we complete the proof.
\end{proof}

To prove Lemma~\ref{big a}, we will consider two cases, making different arguments depending on whether we can find a sufficiently large matching in $G'_2[A, V'-A]$. First, however, we prove bounds that are useful in both cases.

By \eqref{fn10} and \eqref{suit-s5},
$$k' \ge N'+a - 2 (1+\gamma+2\epsilon) n  \ge \max\{n_1,n\}+2n-1-9\epsilon n+(1-3\gamma)n - 1- 2 (1+\gamma) n.$$
So,
\begin{equation}\label{V'-A}
k' \ge \max\{n_1,n\}+ n- (5\gamma+9\epsilon)n-2.
\end{equation}
Construct an independent set $I$ in $G'_1-A-D_0$ of size $k'$ by choosing one vertex from 
 each  component of $G'_1-A-D_0$.   Let  $Q = V' - A - I$. Then by~\eqref{suit-s5}, 
$$
|V'-A| \leq \max\{n_1,n\}+2n-1 - a \leq \max\{ n_1,n\} + 2n-1-( (1-3\gamma)n - 1),
$$
and thus  by~\eqref{V'-A},
$$|Q| \le N'-a-k' \leq    \max\{n_1,n\}+2n-1-( (1-3\gamma)n - 1)-( \max\{n_1,n\}+ n- (5\gamma+9\epsilon)n-2).$$
Hence
\begin{equation}\label{Q}
|Q|\leq   8 \gamma n+9\epsilon n + 2<9  \gamma n.
\end{equation}

\begin{claim}\label{large-a-case-1}
	If $\alpha'(G'_2[A, V'-A]) \le 8\gamma n$, then $V$ has a $(68\gamma, 2,1)$-bad partition.
\end{claim}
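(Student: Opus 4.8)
The plan is to exploit the hypothesis $\alpha'(G'_2[A,V'-A]) \le 8\gamma n$ to locate, via the König--Egerváry theorem, a small set that covers all $G'_2$-edges between $A$ and the rest, and then combine this with the structural information already assembled (that $a \ge (1-3\gamma)n-1$, and the estimates \eqref{fn10}, \eqref{V'-A}, \eqref{Q}) to carve out the two blocks $U_1, U_2$ of a $(\lambda,2,1)$-bad partition around a single part $V_j'$. Concretely, first I would apply Theorem~\ref{konig-egervary} to $G'_2[A,V'-A]$ to obtain a vertex cover $T$ with $|T| \le 8\gamma n$; every $G'_2$-edge incident to $A$ thus meets $T$. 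Since $a \ge (1-3\gamma)n-1$ is large and by \eqref{V'-A} $k' \ge \max\{n_1,n\} + n - O(\gamma)n$, the independent set $I$ of size $k'$ in $G'_1 - A - D_0$ found just before the claim is huge, and $Q = V'-A-I$ is tiny by \eqref{Q}. The vertices of $I$ are pairwise non-adjacent in $G'_1$, so any edge of $G'_1$ among them is absent; meanwhile almost all of $V' - A$ lies in $I$, so $G'_1 - A$ has essentially $k'$ components and $A$ itself carries a near-perfect matching of $G'_1$ of size $\approx a/2$.

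The key step is to show that $A$ behaves like a single part. Because $a$ is so close to $n$ while $N - n_j \ge 2n-1$ for each $j$ (condition \eqref{suit-s2'}), $A$ cannot be spread across two or more parts: if $A$ met parts $V'_{j_1}$ and $V'_{j_2}$ in sets of size $\ge \epsilon n$ each, then (using \eqref{2e''}) two vertices of $A$ from different parts would have many common $G'_1$-neighbors, forcing $G'_1[A]$ to be connected with a matching far exceeding $(1+\gamma)n$ together with the near-perfect matching on $A$ — actually I would argue more carefully using the Gallai--Edmonds structure: Theorem~\ref{GE}(c) says $N(S)$ for $S \subseteq A$ meets $\ge |S|+1$ of the $D'_i$, and if $A$ is large and split, we could route a long connected matching through $A$ and the $D'_i$'s in $G'_1$ using Proposition~\ref{tri-min} applied to the bipartite-like graph between $A$ and $\bigcup D'_i$. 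So $A \subseteq V'_j$ for one fixed $j$; then $n_j \ge a \ge (1-3\gamma)n - 1$, giving condition (iii) of a $(\lambda,2,1)$-bad partition (with $\lambda = 68\gamma$; here $V_j$ plays the role of the distinguished part — wait, in the $(\lambda,i,2)$ definition it is $V_j$, but we are building a $(\lambda,2,1)$-bad partition, which is a two-block partition $W_1 \cup W_2$). So instead: set $W_2 = V_j \cup (\text{a bit extra})$ and $W_1$ the rest, aiming to meet $(1-\lambda)n \le |W_2| \le (1+\lambda)n_1$, and bound the cross-edges.

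For the edge bounds, I would put $W_2 := (V' - A - I) \cup A = Q \cup A$ enlarged to include $V_j$ if necessary — more cleanly, since $A \subseteq V_j$, take $W_2$ to be $V_j \cup Q$ together with the deleted vertices $\widetilde V \cup (V_s - \widetilde V)$, and $W_1 = V(G) - W_2$, which lies essentially in $I \setminus V_j$. Then: (ii) $|E(G_2[W_1,W_2])|$ is small because every such $G'_2$-edge incident to $A$ meets the cover $T$ (size $\le 8\gamma n$), every edge incident to $Q \cup \widetilde V \cup (V_s-\widetilde V)$ meets a set of size $O(\gamma)n + O(\epsilon)n$, and the $\le |W_1| \cdot O(\gamma n) \le O(\gamma) n^2$ bound follows since $|W_1| \le 2n$; (iii) $|E(G_1[W_1])|$ is small because $W_1$ is almost entirely $I$ minus $V_j$, an independent set in $G'_1$, so the only $G_1$-edges inside $W_1$ touch $Q \cup \widetilde V$, again $O(\gamma)n^2$ after multiplying by $|W_1| \le 2n$; and (i) the size bounds $(1-\lambda)n \le |W_2| \le (1+\lambda)n_1$ follow from $|W_2| = a + |Q| + O(\epsilon n)$ with $a \in [(1-3\gamma)n-1, n]$ (the upper bound $a \le n$ being forced: if $a > n$, the near-perfect matching on $A$ inside $V_j$ is impossible since $A$ is independent, so $A$-edges go to $I$, and Theorem~\ref{GE}(a) gives a matching of $A$ into distinct $D'_i$ of size $a$, hence $\alpha'(G'_1) \ge a > n$... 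I need $a \le (1+\gamma)n$ roughly, which should drop out of $\alpha'_*(G'_1) < (1+\gamma)n$ and $a \le \alpha'(G'_1[C_1])$). Tracking the constants, each of (i)--(iii) comes in under $68\gamma n^2$ (resp. within $68\gamma$ of the target interval in (i)), establishing the $(68\gamma,2,1)$-bad partition.

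The main obstacle I expect is the constant bookkeeping in (i): pinning $|W_2|$ into $[(1-68\gamma)n, (1+68\gamma)n_1]$ requires both a clean upper bound $a \le (1+O(\gamma))n$ — which must be extracted from $\alpha'_*(G'_1) < (1+\gamma)n$ together with the count $\alpha'(G'_1) = \frac{N'-b-k+a}{2}$ and the fact that this matching includes a perfect matching of $G'_1[C_1]$ — and careful accounting of exactly which deleted vertices ($\widetilde V$, $V_s - \widetilde V$, $Q$) end up on which side of the partition and whether they can push $|W_2|$ or $|W_1|$ out of range. The other delicate point is ruling out the case where $A$ is genuinely split among two parts near-evenly; here I would lean on Proposition~\ref{tri-min} applied to the auxiliary $s'$-partite graph on $A \cup \bigcup_{i\ge 1} D'_i$ (with the $D'_i$ singletons, by the Gallai--Edmonds structure when $b$ is small, or handled via Claim~\ref{stab-compl} when $b$ is large) to manufacture a $G'_1$-connected matching of size $> (1+\gamma)n$, contradicting the standing assumption.
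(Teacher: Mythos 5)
Your proposal contains a genuine gap, and it is the pivotal step. You try to prove that $A$ ``behaves like a single part,'' i.e.\ $A\subseteq V'_j$ for some $j$, and then build the bad partition around $V_j$. This is not provable and is in fact false in the extremal configurations: in the analogue of the Figure~\ref{ex1} coloring inside a multipartite host, the Gallai--Edmonds set $A$ of $G'_1[C_1]$ is (approximately) the side $W_2$ of the bad partition, and nothing forces that set to lie in one part --- it can be spread arbitrarily over the $V'_j$'s. Your two sketched arguments for it do not work: common neighbors of two vertices of $A$ from different parts exist in $G$, but there is no reason those edges lie in $G'_1$; and the ``near-perfect matching on $A$'' you invoke does not exist --- Theorem~\ref{GE} only gives a matching of $A$ \emph{into} distinct components $D'_i$, of size $a\approx n$, which is perfectly consistent with $\alpha'_*(G'_1)<(1+\gamma)n$, so no contradiction arises from $A$ being large and split. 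Moreover, even if $A\subseteq V_j$ were granted, your choice $W_2\supseteq V_j$ breaks condition (ii): the K\"onig--Egerv\'ary cover $T$ only controls $G'_2$-edges incident to $A$, while the (possibly $\Theta(n)$ many) vertices of $V_j-A$ can carry $\Theta(n^2)$ edges of $G_2$ into $W_1$, which you never bound.

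The repair is to drop the single-part claim entirely and take $W_2=A-X$ itself (where $X$ is the vertex cover of $G'_2[A,V'-A]$ of size at most $8\gamma n$) and $W_1=V'-W_2$; this is what the paper does. Then (i) follows from $a\ge(1-3\gamma)n-1$ and from $a\le\alpha'(G'_1)<(1+\gamma)n$ (via the matching saturating $A$ from Theorem~\ref{GE}); (ii) follows because every $G_2$-edge from $W_2\subseteq A$ to $W_1$ must meet $X\cap W_1$, giving at most $8\gamma n\cdot a$ edges; and (iii) follows because $W_1-(X\cap A)\subseteq I\cup Q$ with $I$ independent in $G'_1$ and $|Q|\le 9\gamma n$ by~\eqref{Q}, so every $G'_1$-edge inside $W_1$ meets a set of size at most $17\gamma n$, yielding at most $68\gamma n^2$ edges. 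Your ingredients for (ii) and (iii) (the cover $X$ and the independent set $I$) are the right ones; the error is in what you chose as $W_2$ and in the unsupported structural claim about $A$.
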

\begin{proof}
Since  $G'_2[A,V'-A]$ is bipartite, by Theorem~\ref{konig-egervary}, it has  a vertex cover $X$ with $|X|\leq 8 \gamma n$. Let $W_2 = A - X$, and $W_1 = V' - W_2$.
We will show that
$(W_1,W_2)$ is a $(68\gamma, 2,1)$-bad partition for $G'$ by checking all conditions.

\medskip\noindent\textbf{(i):} Since $a \ge (1-3\gamma)n - 1$ and $|X| \le 8 \gamma n$, 
 $$|W_2| = |A - X| \ge a - |X| \ge (1-3\gamma n) - 1 - 8 \gamma n \ge (1-12\gamma)n.$$ 
 On the other hand, $|W_2| = |A - X| \le a \le (1+\gamma)n.$
 
\medskip\noindent\textbf{(ii):}
 Since $X$ is a vertex cover in $G'_2[A, V'-A]$, $G'_2$ has  no edge in $G_2$ between $W_2-X=W_2$ and $W_1 - X$. 
 Thus, 
 $$|E(G'_2[W_1,W_2])| \le |X\cap W_1| \cdot |W_2| \le 8 \gamma n \cdot a <16 \gamma n^2.$$
\medskip\noindent\textbf{(iii):}
Since $I$ is an independent set in $G'_1$, by~\eqref{Q}, 
$$|E(G'_1[W_1])| \le |Q \cup (A \cap X)| \cdot |W_1| \le 17 \gamma n  N' \le 68 \gamma  n^2. $$

This completes the proof.
\end{proof}

\begin{claim}\label{large-a-case-2}
	If $\alpha'(G'_2[A, V'-A]) \ge 8\gamma n$, then $V$ has a $(35\gamma,1,2)$-bad partition.
\end{claim}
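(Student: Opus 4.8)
The plan is to mirror the structure of Claim~\ref{large-a-case-1}, but now use the large matching in $G'_2[A,V'-A]$ to produce a $(\lambda,1,2)$-bad partition (with $i=1$, $j=2$) rather than a bad partition of the first type. First I would take a maximum matching $M$ in $G'_2[A,V'-A]$; by hypothesis $|M|\ge 8\gamma n$, and let $A^* \subseteq A$ be the endpoints of $M$ lying in $A$, so $|A^*|=|M|\ge 8\gamma n$. The rough idea is that the vertices of $A^*$, together with the large matching $M$ into $V'-A$ and the matching inside $G'_1$ saturating $A$ (guaranteed by Theorem~\ref{GE}), can be combined to build a connected matching in $G'_1\cup G'_2$ that is too large unless the colors are very structured. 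The key point is that $G'_1$ has a matching saturating $A$ (hence saturating $A^*$), so if the $A^*$-vertices are connected to the rest of $G'_1[C_1]$, concatenating the $G'_1$-matching near $A^*$ with a long $G'_1$-path through $C_1$ would already give $\alpha'_*(G'_1)\ge (1+\gamma)n$; thus $A^*$ must interact with $G'_1$ only in a restricted way, which is essentially what forces condition (i) of a $(35\gamma,1,2)$-bad partition.

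Concretely, I would let $V_j$ be the (unique, by Claim~\ref{G2}(c) or an analogous statement) part containing most of the isolated vertices of $G'_2-A$, or more likely identify $V_j$ as the part forced by the disconnectedness structure; then set $U_1$ and $U_2$ to be the two sides determined by which color dominates the edges from $V_j$. Condition (iii) ($n_j\ge(1-\lambda)n$) should come from \eqref{suit-s5} together with the fact that the relevant part is large; conditions (iv) and (v) (that $|U_1|,|U_2|$ are both $(1\pm\lambda)n$) follow from $N\le 4n-2$, $N-n_1\ge 2n-1$, and the sizes already pinned down for $a$ and $k'$ via \eqref{fn10} and \eqref{V'-A}. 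The two edge-bound conditions (i) $|E(G'_1[V_j,U_1])|\le\lambda n^2$ and (ii) $|E(G'_2[V_j,U_2])|\le\lambda n^2$ are the substantive ones: each would be proved by contradiction, assuming more than $\lambda n^2$ edges of the relevant color, extracting via the Erd\H os--Gallai/K\"onig--Egerv\'ary machinery a matching of size $\gtrsim \lambda n$ of that color crossing $V_j$, and then splicing it with the $G'_1$-matching saturating $A$, with $M$, and with long paths through $C_1$ and through the other large component, to build a monochromatic connected matching exceeding $(1+\gamma)n$ — using Proposition~\ref{tri-min} to guarantee the long paths/matchings inside the dense $s'$-partite pieces and Claim~\ref{G2} to guarantee connectivity of the $G'_2$ part.

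The main obstacle I expect is bookkeeping the connectivity: to get a \emph{connected} matching of size $>(1+\gamma)n$ in, say, $G'_2$, I need the matching $M$ of size $8\gamma n$ through $A$, the small matching from a putative violation of (ii), and a near-perfect matching of the giant component $C_2$ of $G'_2$ to all lie in one component; this requires carefully tracking which of the two large components ($C_1$ for color $1$, $C_2$ for color $2$) absorbs which pieces, and using \eqref{2e''} (every vertex of $V'_j$ sees more than half of every other $V'_{j'}$) to glue things. A secondary subtlety is that, unlike Claim~\ref{large-a-case-1}, here the "bad part" is an actual part $V_j$ of $G$, so I must argue that the isolated-vertex structure really does concentrate in a single part — this is where Claim~\ref{G2}(c) (all isolated vertices of $G'_2-A$ lie in one $V'_j$) is essential, and I would need to first check its hypothesis ($G'_2-A$ disconnected) holds, which should follow from $\alpha'_*(G'_2)<(1+\gamma)n$ together with $c_2$ being large (so $G'_2$ has few non-isolated vertices outside its giant component, forcing either disconnection or an immediate contradiction). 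Once the part $V_j$ and the split $U_1\cup U_2$ are correctly identified, the remaining estimates are routine given $\gamma>1000\epsilon$ and $n>100/\gamma$.
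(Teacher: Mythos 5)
Your overall target is right, but the way you propose to identify the special part $V_j$ does not work, and this is the heart of the proof. You plan to find $V_j$ via Claim~\ref{G2}(c) as the part containing the isolated vertices of $G'_2-A$, after first establishing that $G'_2-A$ is disconnected. In the regime of Lemma~\ref{big a} we have $a \ge (1-3\gamma)n-1 > 3\gamma n$, so Claim~\ref{G2}(a) says $G'_2-A$ \emph{cannot} be disconnected; the paper in fact relies on the connectivity of $G'_2-A$ to glue matchings together. The part is instead located through the color-$1$ structure: let $I$ be the independent set in $G'_1$ consisting of one vertex from each of the $k'$ components of $G'_1-A-D_0$ (so $|I|=k'$ is huge by \eqref{fn10}), and let $X$ be a matching of size $8\gamma n$ in $G'_2[A,V'-A]$. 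Any matching in $G'_2[I-V(X)]$ together with $X$ is a connected matching of $G'_2$ (connectivity of $G'_2-A$ again), so $\alpha'(G'_2[I-V(X)])<(1-7\gamma)n$; since $|I-V(X)|$ is close to $\max\{n_1,n\}+n$, Proposition~\ref{tri-min} then forces a single part $V'_h$ to contain all but about $(1-7\gamma)n$ vertices of $I-V(X)$, whence $|A\cap V'_h|\le 6.5\gamma n$. That is the step your outline is missing, and no appeal to isolated vertices of $G'_2-A$ can replace it, since there are none to speak of here.

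Two further points. First, with $U_1=A-V_h$ and $U_2=V'-A-V_h$, the bound on $|E(G'_1[V_h,U_2])|$ is not obtained by a contradiction/matching-extraction argument as you suggest: it is direct, because $U_2$ is essentially contained in $I\cup Q\cup V(X)$ with $I$ independent in $G'_1$ and $|Q|,|V(X)|=O(\gamma n)$. Second, to bound $|E(G'_2[U_1,V_h])|$ one needs $\alpha'(G'_2[A-V'_h,V'_h-A])\le 7\gamma n$, and for this the paper runs an augmenting-path exchange (its Step 2) that rebuilds $X$ so that the number of its edges meeting $V'_h$ equals this matching number while keeping $|X|$ fixed; this mechanism has no counterpart in your sketch. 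So the proposal as written has a genuine gap at the selection of $V_j$ and omits the argument behind condition (ii) of the bad partition.
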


\begin{proof}
Let $X$ be a matching of size $8 \gamma n$ in $G'_2[A, V'-A]$. 

\medskip\noindent\textbf{Step 1:}
Our first step is to prove some preliminary facts about $X$.
Since $|I|=k'$, by~\eqref{V'-A}, 
\begin{equation}\label{I-V(X)}
|I-V(X)| \ge  \max\{n_1,n\}+ n- (5\gamma+9\epsilon)n-2 - 8 \gamma n = \max\{n_1,n\}+ (1-13 \gamma-9\epsilon )n - 2.
\end{equation}
Let $R$ be a matching of size $\alpha'({G'_2}[I-V(X)])$ in $I-V(X)$ in $G'_2$. Since $a>3\gamma n$, by Claim~\ref{G2}, $G'_2-A$ is connected, and hence $R \cup X$ is a connected matching in $G'_2$. Since $\alpha'_*(G'_2) < (1+\gamma) n$,  
$$|R| + |X| = \alpha'({G'_2}[I-V(X))]) + 8 \gamma n < (1+\gamma)n.$$ Therefore, 
\begin{equation}\label{I-V(X)'}
\alpha'({G'_2}[I-V(X)]) < (1 - 7 \gamma)n.
\end{equation}

For all $j \in [s']$, let $X_j = V'_j \cap V(X) \cap I,$ and $Y_j = V'_j \cap I - V(X)$ for $j \in [s']$. Let $h \in [s']$ be such that $|Y_h| = \max\{|Y_j| : j \in [s']\}$. By Proposition~\ref{tri-min}, 
\begin{equation}\label{NM}
\alpha'(G_2[I-V(X])\geq \min\left\{\left\lfloor \frac{|I - V(X)|}{2} \right\rfloor,  |I-V(X)-Y_{h}|\right\}-\epsilon n.
\end{equation}
Since by~\eqref{I-V(X)} and~\eqref{I-V(X)'},
$$\left\lfloor \frac{|I - V(X)|}{2} \right\rfloor \ge \left\lfloor  \frac{k'-8\gamma n}{2}\right\rfloor \geq n-1- \frac{(13\gamma+9\epsilon)n}{2} > (1-7\gamma+2\epsilon)n \ge \alpha'(G_2[I-V(X)])+2\epsilon n,$$
\eqref{I-V(X)'} and~\eqref{NM}  yield
 \begin{equation}\label{i-v-y}
  |I-V(X)-Y_{h}|-2\epsilon n \leq \alpha'(G_2[I-V(X)]) \le (1-7 \gamma)n.
  \end{equation}
   Again by~\eqref{I-V(X)},
\begin{equation}\label{Y_1}
|Y_{h}|  \ge \max\{n_1,n\}+ (1-13 \gamma-9\epsilon )n - 2
 - (1 - 7 \gamma)n \ge \max\{n_1,n\} - 6.5\gamma n .
\end{equation}

 \noindent By \eqref{Y_1}, we have 
 \begin{equation}\label{Y_1'}
 |A\cap V'_{h}| \le |V'_{h}| - |Y_{h}| \le n_1 - (n_1 - 6.5 \gamma n ) = 6.5 \gamma n.
  \end{equation}
  
\medskip\noindent\textbf{Step 2:}
Our second step is to modify the matching $X$ to satisfy the following condition:
\begin{equation}\label{improved-x-property}
	\alpha'(G'_2[A - V'_{h}, V'_h - A]) = |X_{h}| \text{ and } \alpha'(G'_2[A - V'_h, V'_h - A]) \le 7\gamma n.
\end{equation}  
Fix a maximum matching $S$ in $G'_2[A - V'_h, V'_h - A]$.

Let $M_j$ be the subset of matching edges of $X$ with an endpoint in $X_j$. By definition, $|M_{h}|=|X_{h}| \le |S|$. For as long as $|X_h| < |S|$, we repeat the following procedure to increase $|X_h|$.

Each component of $S\cup M_{h}$ is a path or a cycle. Since $|S|>|M_{h}|$, there is a component $C$ (a path) of $S\cup M_{h}$ with one more edge in $S$ than in $M_{h}$. Say the endpoints of $C$ are $w_1$ and $w_2$. Then we can assume $w_1 \in Y_h$ and $w_2 \in A$. There are two cases:
\begin{itemize}
\item If $w_2$ is incident with an edge $e \in X-M_{h} $, then we switch the edges in $C$ (if an edge was originally in $S$  then now it is in $M_{h}$ and vice versa) and delete $e$ from $X$. 
\item If $w_2$ is not incident with any matching edge in $X-M_{h}$, then we switch the edges in $C$ and delete any  edge $e \in X-M_{h}$. 
\end{itemize}
In both cases, we obtain a new matching $X'$ with $|X'| = |X|$ and $|X'_{h}| = |X_{h}|+1$. Note that~\eqref{Y_1} still works for $X'$ and by~\eqref{Y_1'}, 
   \begin{equation}\label{Y_1''}
|X'_{h}| \le |V'_{h}| - |Y'_{h}| <7 \gamma n .
 \end{equation}
   Thus repeating the procedure, on every step we increase $|X'_{h}|$, but preserve~\eqref{Y_1''}. 
 Eventually we construct a matching $X''$
   with $|X''_{h}| = \alpha'(G_2[A - V'_h, V'_h - A])<7\gamma n$.

\medskip\noindent\textbf{Step 3:} We are finally ready to construct the partition that proves Claim~\ref{large-a-case-2}. Let $U_1 = A - V_h$ and $U_2 = V(G) - A - V_h$. We now show that $(V_{h},U_1,U_2)$ is a $(35\gamma,1,2)$-bad partition by checking conditions (i)--(v) in the definition.

\medskip\noindent\textbf{(i):}
 Since by~\eqref{Q} and~\eqref{i-v-y},
   \begin{equation}\label{n16}
 |U_2| \le |I-V(X)-Y_{h}|
   + |Q| + |X| \le (1-7\gamma +2\epsilon)n + 9 \gamma n + 8 \gamma n\leq (1+ 10\gamma+2\epsilon)n,
\end{equation}
$$\hspace{-3.5cm}\text{ we have }\quad \quad \quad \quad |E(G'_1[V'_{h}, U_2])| \le |A\cap V_{h}| \cdot |U_2|+ |Q| \cdot |U_2| + |Q| \cdot |Y_{h}| $$
$$\le (6.5\gamma n)(1+10\gamma+2\epsilon)n  + 9\gamma n (1+10\gamma+2\epsilon)n +  9\gamma n(2n-1) \le 35 \gamma n^2.$$
\medskip\noindent\textbf{(ii):}
By~\eqref{Y_1'} and~\eqref{improved-x-property}, 
$$|E(G'_2[U_1,V_{h}])| \le 7\gamma n \cdot n_1 + |A\cap V'_{h}| \cdot |U_1| \le 7\gamma n (2n-1)+ 6.5 \gamma n (1+\gamma)n <22 \gamma n^2.$$
\medskip\noindent\textbf{(iii):}
By~\eqref{Y_1}, $|V'_{h}| \ge |Y_{h}| \ge (1-6.5\gamma)n.$

\medskip\noindent\textbf{(iv):}
 Since $a \ge (1-3\gamma)n - 1$, by~\eqref{Y_1'},  
 $$ (1-10\gamma)n-1 \le (1-3\gamma)n - 1 - 6.5 \gamma n  \le a-|A\cap V_{h}|= |U_1| \le a \le (1+\gamma)n.$$  
\medskip\noindent\textbf{(v):}
 By~\eqref{Y_1}, $$|U_2| = N' - |V_{h}| - |U_1| \ge (n_1 + 2n - 1 -5\epsilon n)- n_1 - (1+\gamma)n = (1-2\gamma)n.$$
On the other hand, by~\eqref{n16}, $|U_2|  \le (1+11\gamma)n.$
This completes the proof of the claim.
\end{proof}

The hypothesis of either Claim~\ref{large-a-case-1} or Claim~\ref{large-a-case-2} must hold, completing the proof of Lemma~\ref{big a}, which was the final step of proving Theorem~\ref{stability}.

\bigskip\noindent
{\bf \large Acknowledgement.}
We thank Louis DeBiasio for helpful discussions. We also thank  both referees for detailed and helpful comments. Moreover, one of the referees suggested a nicer and shorter proof of Theorem 4.

\small

\end{document}